\definecolor{darkblue}{rgb}{0.0,0.0,0.5}
\definecolor{darkred}{rgb}{0.5,0.0,0.0}
\newtheorem{theorem}{Theorem}[section]
\newtheorem{proposition}[theorem]{Proposition}
\newtheorem{corollary}[theorem]{Corollary}
\newtheorem{lemma}[theorem]{Lemma}
\newtheorem{rk}[theorem]{Remark}
\newtheorem{defin}[theorem]{Definition}
\newcommand{\ze}{\mathbb{Z}}
\newcommand{\re}{\mathbb{R}}
\newcommand{\p}{\mathbb{P}}
\def\bR{\mathbb{R}}
\def\bZ{\mathbb{Z}}
\newcommand{\esp}{\mathbb{E}}
\def\para{\theta}
\newcommand{\wh}{\mathscr{W}}
\newcommand{\cB}{\mathcal{B}}
\newcommand{\she}{\mathcal{Z}}
\newcommand{\kpz}{\mathcal{H}}
\newcommand{\ou}{\mathcal{U}}
\newcommand{\kpzfp}{\mathbf{H}}
\newcommand{\cS}{\mathcal{S}}
\newcommand{\cR}{\mathcal{R}}
\newcommand{\cRKPZ}{\mathcal{R}^{\textsc{KPZ}}}
\newcommand{\cRKPZeq}{\mathcal{R}^{\textsc{KPZeq}}}
\newcommand{\cRLPP}{\mathcal{R}^{\textsc{LPP}}}
\newcommand{\cRexcl}{\mathcal{R}^{\textsc{TASEP}}}
\newcommand{\rhoKPZ}{\rho^{\textsc{KPZ}}}
\newcommand{\rhoEW}{\rho^{\textsc{EW}}}
\newcommand{\grad}{\nabla}
\newcommand{\dd}{\mathrm{d}}
\def\Vvv{{\rm\mathbb{V}ar}}
\def\Cvv{{\rm\mathbb{C}ov}}
\def\CRvv{{\rm\mathbb{C}orr}}
\newcommand{\Cal}[1]{\mathcal{#1}}
\author[J.-D.\ Deuschel G.\ Moreno Flores T.\ Orenshtein]{Jean-Dominique Deuschel$^\star$, Gregorio R.\ Moreno Flores$^\diamond$ and Tal Orenshtein$^\triangledown$}
\thanks{$^\star$ TU Berlin, deuschel@math.tu-berlin.de}
\thanks{$^\diamond$ Pontificia Universidad Cat\'olica de Chile, grmoreno@mat.uc.cl}
\thanks{$^\triangledown$ TU Berlin and Weierstrass Institute Berlin, orenshtein@wias-berlin.de}
\date{}
\title[Aging for stationary KPZ]{Aging for the stationary Kardar-Parisi-Zhang equation and related models}
\begin{document}

\begin{abstract}
We study the aging property for stationary models in the KPZ universality class. In particular, we show aging for the stationary KPZ fixed point, the Cole-Hopf solution to the stationary KPZ equation, the height function of the stationary TASEP, last-passage percolation with boundary conditions and stationary directed polymers in the intermediate disorder regime. All of these models are shown to display a universal aging behavior characterized by the rate of decay of their correlations.
As a comparison, we show aging for models in the Edwards-Wilkinson universality class where a different decay exponent is obtained.
A key ingredient to our proofs is a characteristic of space-time stationarity - covariance-to-variance reduction - which allows to deduce the asymptotic behavior of the correlations of two space-time points by the one of the variances at one point.
We formulate several open problems.
\end{abstract}

\maketitle

\tableofcontents


\section{Introduction}\label{sec:intro}


%
%
Aging is a property satisfied by a wide family of non-equilibrium dynamics in disordered
media, including many interesting processes in random environments. Heuristically, a process is said to satisfy aging if, the older it gets, the longer it takes to forget its past.
To properly state this property, we consider the correlation between two random variables $Q_1$ and $Q_2$:
	\begin{align*}
    \CRvv(Q_1,Q_2)= \frac{\Cvv\, (Q_1,Q_2)}{\sqrt{\Vvv\, Q_1} \sqrt{ \Vvv \, Q_2}}.
    \end{align*}
We say that a process $(Y_t)_{t\ge 0}$ satisfies the \emph{aging} property with respect to the aging function $\rho$ if
$\rho:[1,\infty) \to (0,1]$ is so that $\rho(a)< 1$  for $a> 1$ and 
\begin{align*}
	\lim_{t\to\infty}\CRvv\left( Y_t,\, Y_{at}\right)
	=
	\rho(a)
\end{align*}
for all $a\ge1$.

The study of aging originated in the physics literature in the context of cooling experiments for glassy systems \cite{Struick, Ferry}. Here, the age of the system was understood as the time during which the system is kept at a fixed temperature. It was observed that older systems take longer to relax when submitted to a thermal variation.

The problem then reached the mathematics community, especially in the study of spin glasses and trap models \cite{BC1, BC2, BBG, BCM, BDG}.
Aging was studied also for random walks in random environment, with yet a similar formulation in which the correlation function is replaced by the distance distribution function, cf.\ \cite{ESZ,DGZ} and the references therein.

The aging property was observed in interacting diffusions as well \cite{DD}. One of these is the parabolic Anderson model (PAM)
\begin{eqnarray*}
	dZ_j(t) = \tfrac12 \Delta Z_j(t)\dd t + \beta Z_j(t) dB_j(t), \quad t\geq 0,\, j\in\ze^d,
\end{eqnarray*}
where $\beta>0$, $\Delta$ is the discrete Laplacian and $(B_j)_{j}$ is an i.i.d.\ family of standard one-dimensional Brownian motions. It was noticed that such a property is highly sensitive to the details of the models. 
In particular, it is shown in \cite[Proposition 1.3 (iii)]{DD} that no aging takes place
for $Z_j$. On the other hand, it was conjectured that aging
should take place for $\log Z_j$
in $(d+1)$-dimensions with $d=1,2$ and for large $\beta$ in dimensions $d\ge3$. This conjecture was the original motivation for our work. 

The PAM can be seen as a discrete version of the Stochastic Heat Equation with multiplicative noise (SHE)
(cf.\ \eqref{eq:she} below). The SHE is in turn related to the Kardar-Parisi-Zhang equation (KPZ) through the Cole-Hopf transformation $\mathcal H(t,x)=\log \mathcal Z(t,x)$, where $\she$ is the solution of the SHE. One of our main results - Theorem \ref{thm:main-KPZ-FP} - shows aging for the KPZ equation in 1$+$1 dimensions in the {\it stationary}
regime, i.e.\ when $\kpz(t,0)=\cB(x)$, where $\cB$ is a two-sided Brownian motion (see Section \ref{sec:summary} for a precise definition of stationarity). Moreover, this is obtained with an \emph{explicit} aging function.

The study of aging in the KPZ universality class has been the object of many recent works in experimental physics \cite{TS, dNlDT}, theoretical physics \cite{FS16, dNlD17, lD17, dNlD18} and mathematics \cite{BG18,FO19,CGH20}.
It is intimately related to the two-time correlations of the models under consideration, a challenging problem which has been successfully tackled only in the last few years. In this work, we show aging for several stationary models in this class, always with the same aging function. This can be seen as a lower resolution observation on the sensitivity of the property to the details of the models; it supports the idea that the aging behavior is yet another universal property inside the KPZ universality class.


\subsection{The KPZ universality class}\label{sec:previous}


The Kardar-Parisi-Zhang equation was introduced in the physics literature as a model of phase separation lines in the presence of impurities \cite{KPZ}.
It can be written as
\begin{eqnarray*}
	\partial_t \kpz = \tfrac12 \partial_x^2 \kpz + |\partial_x \kpz|^2 + \wh,
\end{eqnarray*}
where $\wh$ is a space-time white noise.
We refer to \cite{Q-review, C-review, SQ} for comprehensive reviews on this equation and the KPZ universality class, including physical systems under its scope. Among the many possible initial conditions, three of them have attracted much attention: the narrow wedge initial condition $e^{\kpz(0,\cdot)}=\delta_0(\cdot)$, the flat initial condition $\kpz(0,\cdot)=0$ and the stationary initial condition $\kpz(0,\cdot)=\cB(\cdot)$, where $\cB$ is a two-sided Brownian motion.

The fluctuations of the KPZ equation are fairly well understood. It is known that there exist constants $v_{\infty}, c \in \re$ such that
\begin{eqnarray*}
	\frac{
		\kpz(0,t) - v_{\infty} t
	}
	{
		c t^{1/3}
	}
	\Rightarrow
	\chi,
	\quad
	t\to\infty,
\end{eqnarray*}
where the distribution of $\chi$ is not Gaussian and moreover depends on the initial conditions. In particular, it has been identified as the GUE distribution for the narrow-wedge initial condition, the GOE distribution for the flat initial condition and the Baik-Rains distribution for stationary initial conditions. Such behavior is shared by many relevant models in the KPZ universality class such as last-passage percolation models and the totally asymmetric simple exclusion process.

The study of aging properties requires information on two-time correlation functions. Persistence of memory was first observed in the experimental setting in \cite{TS} in the framework of spectacular liquid-crystal turbulence experiments. Aging was observed for the phase separation line of such systems on circular (narrow-wedge) and flat substrates. Such behaviour was further confirmed in \cite{dNlDT} as well as aging in numerical simulations of the Eden model.

There has been historically at least two different approaches to the computation of the two-point correlations of models in the KPZ universality class. The first one consists in expressing the correlations in terms of simpler Airy-like processes. This approach was pioneered in \cite{FS16} for last-passage percolation for narrow-wedge, flat and stationary initial profiles.
In the latter case, it was conjectured that
\begin{eqnarray}\label{eq:FS-variational}
	\lim_{n\to\infty}\Cvv\left( L(n),\, L(an) \right)
	=
	\tau^{2/3}
	\Cvv\left(
		\mathcal{A}(0),\,
		\max_{u\in\re}\{
			\mathcal{A}(u)
			+
			\hat{a}^{-1/3} \tilde{\mathcal{A}}(u\hat{a}^{2/3})-u^2 \hat{a}
		\}
	\right),
\end{eqnarray}
where $L$ denotes the passage time (see Section \ref{sec:LPP} for a precise definition of the model). In the above formula, $\mathcal{A}$ and $\tilde{\mathcal{A}}$ are two independent Airy processes and $\hat{a}=(a-1)^{-1}$. It was further conjectured that
\begin{eqnarray}\label{eq:FS-correlation}
	\lim_{n\to\infty}n^{-2/3}\Cvv\left( L(n),\, L(an) \right)
	=
	C
	\left(
		1+a^{2/3}-(a-1)^{2/3}
	\right),
\end{eqnarray}
where $C$ is identified as the variance of the Baik-Rains distribution \cite[Formula 2.6]{FS16}. {The variational formula \eqref{eq:FS-variational} was later proved in \cite{FO19} while we prove \eqref{eq:FS-correlation} in Theorem \ref{thm:main-LPP}. Note that it was proved in \cite{FO19} that the limiting covariance in \eqref{eq:FS-variational} can be expressed as a combination of variances of Airy process. We obtain such identities at finite scales for all the stationary processes considered in this work (see Section \ref{sec:warm-up}).}

The second approach consists in obtaining highly non-trivial formulae for the two-point distribution function. Conjectural formulae were obtained in \cite{dNlD17, dNlD18} for the KPZ equation with narrow-wedge initial condition and in \cite{lD17} for the Airy process minus a parabola plus a Brownian motion. Rigorous formulae were obtained for a continuum last-passage percolation model in \cite{J1} and for last-passage percolation with geometric weights in an appropriate scaling limit in \cite{J2}. Long and short time asymptotics were then obtained in \cite{J3}. Note that no asymptotics for the correlations are provided in these works. In general, obtaining such information from exact formulae involves very refined asymptotic analysis.

We finally comment on two recent works. In \cite{BG18}, the authors obtain bounds on the correlations of the last-percolation model with exponential weights which match the predictions of \cite{FS16}. The work \cite{CGH20} provides such bounds for the KPZ equation with narrow wedge initial condition, providing the first rigorous results for a positive-temperature model (in contrast with the zero-temperature nature of last-passage percolation). More precisely, if
\begin{align*}
	\overline{\kpz}_t(a,x)
	=
	t^{-1/3}
	\left(
		\kpz(a t, t^{2/3}x) + \alpha t/24
	\right),
\end{align*}
then, it is showed that there exists two positive constants $c_1,\, c_2$ and $t_0>0$ such that
\begin{align*}
	c_1 a^{-1/3}
	\leq
	\CRvv\left(
		\overline{\kpz}_t(1,0),\, \overline{\kpz}_t(a,0)
	\right)
	&
	\leq
	c_2 a^{-1/3},
	\quad
	&
	\forall \, a>2,\, t>t_0,
	\\
	c_1 (a-1)^{2/3}
	\leq
	1-
	\CRvv\left(
		\overline{\kpz}_t(1,0),\, \overline{\kpz}_t(a,0)
	\right)
	&
	\leq
	c_2 (a-1)^{2/3},
	\quad
	&
	\forall \, a\in(1,\tfrac32),\, t>\tfrac{t_0}{a-1}.
\end{align*}
Our work complements the above results by providing an explicit aging function in the stationary case for both last-passage percolation and the KPZ equation.


\subsection{Summary of results}\label{sec:summary}


In this paper we address the question of aging for stationary models in $1+1$ dimensions. A random process $\{F(t,x):\, t\ge0,\, x\in \bR\}$, or $\{F(t,x):\, t\ge0,\, x\in \bZ\}$, is \emph{stationary} (resp.\ \emph{space-time stationary}) if the law of its space (resp.\ space-time) increments is invariant under shifting the process in
time (resp.\ in space-time). A formal definition of space-time stationarity can be found in the content of Proposition \ref{prop:st_stat_KPZ_FP}.
The notion of stationarity should not be confused with equilibrium, in which case the law of the process itself is invariant under time shifts. All our results follow a similar scheme which is outlined in Section \ref{sec:warm-up} and starts by reducing the computation of covariances to a combination of variances involving the processes at a single space-time point.

We first deduce an aging property characterized by the aging function
\begin{align*}
	\rhoKPZ(a)
	:=
	\frac{1+a^{2/3}-(a-1)^{2/3}}{2a^{1/3}}
\end{align*}
for models in the KPZ universality class.
We refer to $\rhoKPZ$ as the KPZ aging function. While this is an asymptotic result for most of the models, the two-time correlations of the KPZ fixed point\cite{MQR} are shown to be exactly given by $\rhoKPZ$. This follows easily from the fact that this process  satisfies the $3:2:1$ scaling property which is a characteristic of its universality class (but is not satisfied by microscopic models at a fixed scale).
The KPZ fixed point is the central object of its own universality class in the sense that it should arise as the scaling limit of the models in this class. This is showed to hold for the totally asymmetric exclusion process (TASEP) in \cite{MQR} and very recently for the KPZ equation in \cite{QS20} and \cite{Vir}.

We then prove aging for the Cole-Hopf solution to the KPZ equation.
Next, we prove aging for the height function of TASEP and last-passage percolation (LPP) with exponential weights and boundary conditions. Our proofs rely
crucially on further tail estimates for these models which have been derived in \cite{BFP} and \cite{CG}.

Finally, we show aging for directed polymers in Brownian environment in the intermediate disorder regime (O'Connell-Yor model \cite{OY}). The argument, which is the most technical part of this work, uses Talagrand's concentration method in Malliavin formalism and may be of independent interest. In this case, we show that the correlations of the model rescale to the ones of the KPZ equation.

As a comparison, we also consider the Edwards-Wilkinson (EW) universality class, which has scaling $4:2:1$. The aging behavior in this case is characterized by the aging function
\begin{align*}
	\rhoEW(a)
	=
	\frac{1+a^{1/2}-(a-1)^{1/2}}{2a^{1/4}}.
\end{align*}
We call $\rhoEW$ the EW aging function.  Once again, the stationary EW model (AKA the stochastic heat equation with additive noise) is at the center of this universality class and its two-time correlations are exactly given by $\rhoEW$. For one-dimensional gradient models which rescale to the EW model, we generalize aging and show the convergence of the rescaled space-time correlation function to the one of EW.

\subsection{A warm up}\label{sec:warm-up}

All the models considered hereafter are stationary.
Our methods follow a fairly elementary scheme and are based on the simple observation that if a process $(Y_t)_{t\ge 0}$ is such that \[
 \Vvv (Y_{t}-Y_{s}) =  \Vvv (Y_{t-s})
\]
for all $0\le s\le t$ and, in addition, its variance satisfies
\[
\lim_{s \rightarrow\infty}\frac{ \Vvv (Y_{s})}{s^{2\alpha}}  = v
\]
for some $v,\alpha>0$, then
\[
\lim_{s\to\infty}\CRvv(Y_s,Y_{as}) = \frac{1+a^{2\alpha}-(a-1)^{2\alpha}}{2a^\alpha}
\]
for all $a\ge 1$. This is a direct consequence of a property we call covariance-to-variance reduction which is detailed in the definition below and allows us to express the covariance as a combination of variances.
In Lemma \ref{lem:stat-CVTV}, we prove the elementary fact that stationary processes satisfy this property.
We stress that the convergence of the variances is a highly non-trivial fact for most of the models considered in this work.

\begin{defin}\label{def:CVTV}
 A random process $F(t,x)$, $t\ge0$, $x\in S$, for $S=\bR$ or $S=\bZ$, is said to follow the \emph{covariance-to-variance reduction} if for all $0\leq t_1 < t_2$ and $x_1,x_2\in S$, it holds that
	\begin{align*}
	\CRvv\left( F(t_1,x_1),\, F(t_2,x_2)\right)		&=
		\frac12
		\frac{\Vvv\, F(t_1,x_1)+\Vvv\, F(t_2,x_2)-\Vvv\, F(t_2-t_1,x_2-x_1)}{\sqrt{\Vvv\, F(t_1,x_1) \, \Vvv \,F(t_2,x_2)}}.
	\end{align*}
\end{defin}

\begin{lemma}\label{lem:stat-CVTV}
 A space-time stationary process follows the covariance-to-variance reduction.
\end{lemma}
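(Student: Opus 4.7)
The plan is to combine the polarization identity for covariance with the assumed space-time invariance of increments. I would first recall the elementary identity
\begin{align*}
	\Cvv(X,Y) \;=\; \tfrac12\bigl[ \Vvv\, X + \Vvv\, Y - \Vvv(Y-X) \bigr],
\end{align*}
valid for any square-integrable random variables $X,Y$, and apply it with $X=F(t_1,x_1)$ and $Y=F(t_2,x_2)$. This already produces the numerator in the claimed formula, modulo the need to rewrite $\Vvv(F(t_2,x_2)-F(t_1,x_1))$ in the form $\Vvv\, F(t_2-t_1,x_2-x_1)$.

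The second step is where space-time stationarity enters. By the definition of space-time stationarity (as stated in the paper, with the formal version given in Proposition \ref{prop:st_stat_KPZ_FP}), the law of the increment $F(t_2,x_2)-F(t_1,x_1)$ depends only on the difference $(t_2-t_1,x_2-x_1)$, so it coincides in distribution with $F(t_2-t_1,x_2-x_1)-F(0,0)$. Under the standing convention used throughout the paper that $\Vvv\, F(t,x)$ refers to the variance of the increment from the origin (so that in particular $\Vvv\, F(0,0)=0$), this gives
\begin{align*}
	\Vvv\bigl(F(t_2,x_2)-F(t_1,x_1)\bigr) \;=\; \Vvv\, F(t_2-t_1,x_2-x_1).
\end{align*}

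Finally I would divide by $\sqrt{\Vvv\, F(t_1,x_1)\,\Vvv\, F(t_2,x_2)}$ to pass from covariance to correlation, which yields exactly the identity in Definition \ref{def:CVTV}. There is really no obstacle here; the lemma is a bookkeeping statement whose only substantive input is translating the distributional invariance of increments into an equality of variances. The main thing to be careful about is the convention on the reference point (that $\Vvv\, F(t,x)$ is shorthand for the variance of $F(t,x)-F(0,0)$), which is implicit in the paper's setup of stationary models and should be flagged at the start of the proof so that step two is unambiguous.
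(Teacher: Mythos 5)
Your proof is correct and follows essentially the same route as the paper: apply the polarization identity $\Cvv(X,Y)=\tfrac12[\Vvv\,X+\Vvv\,Y-\Vvv(Y-X)]$, then use space-time stationarity to replace $\Vvv(F(t_2,x_2)-F(t_1,x_1))$ by $\Vvv\,F(t_2-t_1,x_2-x_1)$. Your added remark about the implicit convention that $\Vvv\,F(t,x)$ means the variance of the increment from the origin is a reasonable clarification but does not change the substance of the argument.
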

\begin{proof}
Applying the elementary identity
\begin{align*}
	\Vvv \, (Q_1-Q_2)
	=
	\Vvv \, Q_1 + \Vvv \, Q_2 - 2 \, \Cvv\, ( Q_1,Q_2 )
\end{align*}
for two random variables $Q_1$ and $Q_2$,
we get
\begin{align*}
	\Cvv\left( F(s,x),F(t,y)\right)
	=
	\frac12\big(\Vvv \, F(s,x) + \Vvv \, F(t,y)-\Vvv \left( F(t,y)-F(s,x)\right)\big).
\end{align*}
Assuming that $\Vvv \left( F(t,y)-F(s,x)\right) = \Vvv\, F(t-s,y-x)$, the above becomes
\begin{align*}
	\Cvv\,\left( F(s,x),F(t,y)\right)
	=
	\frac12
	\big(\Vvv \, F(s,x) + \Vvv \, F(t,y)-\Vvv\left( F(t-s,y-x)\right)\big),
\end{align*}
which proves the covariance-to-variance reduction for $F$.
\end{proof}
 In fact, the above proof shows that  a process is satisfying the covariance-to-variance reduction \emph{if and only if} the variance of its space-time increments is invariant under space-time shifts. All the processes considered in this paper are posteriorly space-time stationary. It might be interesting to construct non-trivial examples of processes which are not space-time stationary but exhibit the covariance-to-variance reduction.
Notice that such a process cannot be Gaussian since the latter is determined by the variances.


\section{The KPZ universality class}




\subsection{The KPZ fixed point}


The stationary KPZ fixed point $\kpzfp$ was introduced in \linebreak \cite{MQR}. In the same paper, it is proved to satisfy the $3:2:1$ scaling identity  $\big(s^{-1/3}\kpzfp(st,s^{3/2}x)\big)_{t\ge0,x\in\bR}\overset{\mathrm{law}}{=}\big(\kpzfp(t,x)\big)_{t\ge0,x\in\bR}$ for $s>0$. 
In addition, the process $\kpzfp$ is stationary, and moreover $x\mapsto \kpzfp(t,x+y)-\kpzfp(t,y)$ is a two-sided Brownian motion for all $t\geq 0$.


\begin{lemma}\label{thm:CVTV-KPZ-FP}
The KPZ fixed point $\kpzfp$ follows the covariance-to-variance reduction.
\end{lemma}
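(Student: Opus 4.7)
The plan is to apply Lemma \ref{lem:stat-CVTV} and reduce the claim to establishing that $\kpzfp$ is space-time stationary. By the remark following Lemma \ref{lem:stat-CVTV}, it is in fact sufficient to check that the law (or even just the variance) of the space-time increment $\kpzfp(t,y) - \kpzfp(s,x)$ depends on $(s,x)$ and $(t,y)$ only through the displacement $(t-s,\, y-x)$.

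The two required invariances are essentially already in the excerpt. Spatial translation invariance follows from the quoted property that $x \mapsto \kpzfp(t, x+y) - \kpzfp(t, y)$ is a two-sided Brownian motion for every $t \geq 0$ and every $y \in \bR$, whose law is manifestly independent of $y$. Temporal translation invariance follows from the quoted stationarity of $\kpzfp$, which I interpret as saying that the law of $(\kpzfp(s+t, \cdot) - \kpzfp(s, 0))_{t \geq 0}$ does not depend on $s \geq 0$. Combining these two facts with the Markov property of the KPZ fixed point and the manifest spatial translation invariance of its dynamics yields the joint invariance of the space-time increment process,
\[
\big(\kpzfp(s+t, x+y) - \kpzfp(s, y)\big)_{t \geq 0,\, x \in \bR} \stackrel{\mathrm{law}}{=} \big(\kpzfp(t, x) - \kpzfp(0, 0)\big)_{t \geq 0,\, x \in \bR},
\]
for every $(s, y) \in [0, \infty) \times \bR$, which is precisely space-time stationarity of the height increments.

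The main technical point I expect is the clean combination of spatial and temporal stationarity, which rests on the Markov property of the KPZ fixed point and the translation invariance of its dynamics in both variables. These features are built into the construction in \cite{MQR} and should be formalized in Proposition \ref{prop:st_stat_KPZ_FP} referenced in the excerpt; granted them, Lemma \ref{lem:stat-CVTV} closes the argument.
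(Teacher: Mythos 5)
Your reduction to Lemma \ref{lem:stat-CVTV} and to the space-time stationarity of $\kpzfp$ is the right first move, and deferring the stationarity itself to Proposition \ref{prop:st_stat_KPZ_FP} is reasonable (though note the paper does not obtain it via a Markov-plus-translation-invariance argument on the fixed point directly: it instead proves the stronger statement at the microscopic level for the TASEP height function, Lemma \ref{thm:TASEP-stationarity}, and then passes to the limit).

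The genuine gap is that you never address integrability. The covariance-to-variance reduction formula involves $\Vvv\,\kpzfp(t_1,x_1)$, $\Vvv\,\kpzfp(t_2,x_2)$ and $\Vvv\,\kpzfp(t_2-t_1,x_2-x_1)$, and for the identity to be a meaningful statement (and for $\CRvv$ to be defined at all) one must know these variances are finite. Stationarity alone does not give you that; it is an a priori fact about the marginal law of $\kpzfp(t,x)$. This is actually where the paper's proof of the lemma spends essentially all of its effort: by the scaling identity it reduces to $t=1$, and then it invokes the convergence of moments of the rescaled TASEP height function from \cite[Theorem 1]{BFP} together with Fatou's lemma to conclude $\esp[\kpzfp(1,x)^2]<\infty$. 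Without an argument of this kind your proof does not establish the lemma — it only establishes the algebraic identity conditionally on finiteness of second moments.
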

Define
\begin{align*}
	\cRKPZ(s,t;x,y)
	=
	\CRvv\left( \kpzfp(s,x),\, \kpzfp(t,y)\right)
\end{align*}
and
\begin{align*}
	\rho^{\text{KPZ}}(a)
	=
	\frac{1+a^{2/3}-(a-1)^{2/3}}{2a^{1/3}}.
\end{align*}
From the $3:2:1$ scaling identity, we immediately obtain
\begin{align*}
	\cRKPZ(s,as;0,0)
	=
	\rho^{\text{KPZ}}(a).
\end{align*}
For general end-points:
\begin{theorem}\label{thm:main-KPZ-FP}
	For all $x_1,x_2\in\re$, we have
	\begin{align*}
		\lim_{s\to\infty}
		\cRKPZ(s,as;x_1,x_2)
		=
		\rho^{\text{KPZ}}(a).
	\end{align*}
\end{theorem}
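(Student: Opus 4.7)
The plan is to combine the covariance-to-variance reduction (Lemma~\ref{thm:CVTV-KPZ-FP}) with the $3{:}2{:}1$ scaling identity and the spatial Brownian structure of the stationary KPZ fixed point. First, I would apply Lemma~\ref{thm:CVTV-KPZ-FP} to write
\begin{align*}
\cRKPZ(s,as;x_1,x_2) \;=\; \frac{\Vvv\,\kpzfp(s,x_1) + \Vvv\,\kpzfp(as,x_2) - \Vvv\,\kpzfp((a-1)s,\,x_2-x_1)}{2\sqrt{\Vvv\,\kpzfp(s,x_1)\,\Vvv\,\kpzfp(as,x_2)}},
\end{align*}
which reduces the problem to the large-$t$ asymptotics of the one-point variance $\Vvv\,\kpzfp(t,x)$ for fixed $x$.

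Next, I would exploit the $3{:}2{:}1$ scaling identity to write $\Vvv\,\kpzfp(t,x) = t^{2/3}\,\Vvv\,\kpzfp(1,\,xt^{-2/3})$, which reduces the question to the behavior of $y\mapsto\Vvv\,\kpzfp(1,y)$ near $y=0$. Setting $\sigma^2 := \Vvv\,\kpzfp(1,0)$ (finite by the Baik--Rains description of the one-point marginal), I would decompose $\kpzfp(1,y) = [\kpzfp(1,y)-\kpzfp(1,0)] + \kpzfp(1,0)$: the first summand has variance $|y|$ by the Brownian-in-space property, and the cross covariance is bounded by $\sigma\sqrt{|y|}$ via Cauchy--Schwarz. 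This yields
\begin{align*}
\Vvv\,\kpzfp(1,y) \;=\; \sigma^2 + |y| + O\bigl(\sqrt{|y|}\bigr) \qquad \text{as } y\to 0,
\end{align*}
and rescaling gives $\Vvv\,\kpzfp(t,x) = t^{2/3}\sigma^2 + O(t^{1/3})$ as $t\to\infty$ for any fixed $x$.

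Finally, I would substitute these asymptotics into the covariance-to-variance formula with $t = s,\, as,\, (a-1)s$. The leading $t^{2/3}\sigma^2$ terms assemble into $s^{2/3}\sigma^2[1+a^{2/3}-(a-1)^{2/3}]$ in the numerator and $2 a^{1/3} s^{2/3}\sigma^2$ in the denominator, while the $O(s^{1/3})$ corrections are negligible after normalization; taking $s\to\infty$ produces the claimed limit $\rho^{\text{KPZ}}(a)$.

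The argument is essentially a direct calculation once the ingredients are in place. The only step requiring any care is the asymptotic of the one-point variance at the scaling origin, which is handled elementarily by the Brownian spatial structure combined with finiteness of the Baik--Rains variance; I do not foresee any substantial obstacle. Note that the exact identity $\cRKPZ(s,as;0,0) = \rho^{\text{KPZ}}(a)$ already observed from scaling at $x_1=x_2=0$ falls out of the same calculation (with no error terms), consistent with the theorem.
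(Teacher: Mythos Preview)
Your proposal is correct and follows essentially the same approach as the paper: covariance-to-variance reduction, then the decomposition $\kpzfp(\cdot,x)=\kpzfp(\cdot,0)+\bigl(\kpzfp(\cdot,x)-\kpzfp(\cdot,0)\bigr)$ combined with the Brownian-in-space property and Cauchy--Schwarz to show that the spatial contribution is negligible after normalizing by $s^{2/3}$. The only cosmetic difference is that you first rescale to time $1$ via the $3{:}2{:}1$ identity and then decompose, whereas the paper decomposes directly at time $s$ and invokes scaling afterwards; the content is identical.
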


\begin{rk}
The proof of Theorem \ref{thm:main-KPZ-FP}, in Section 5 below, covers also the case $x_1,x_2=o(s^{2/3})$. For $x_1=\hat{x}_2s^{2/3}, x_2=\hat{x}_2s^{2/3}$, the scaling identity gives
\begin{align}\label{eq:remark3.2}
	\cRKPZ(st_1,st_2; s^{2/3}\hat{x}_1,s^{2/3}\hat{x}_2)
	=
	\cRKPZ(t_1,t_2; \hat{x}_1,\hat{x}_2).
\end{align}
\end{rk}


\subsection{The stationary Kardar-Parisi-Zhang equation}\label{sec:results-SHE}

We consider the stationary stochastic heat equation with multiplicative noise (SHE) i.e.\ the mild solution to the equation
\begin{eqnarray}\label{eq:she}
	\partial_t \she &=& \mfrac12 \partial_x^2 \she + \she \wh,
	\\
\nonumber
	\she(0,x) &=& e^{\cB(x)},
\end{eqnarray}
where $\wh$ is a space-time white noise and $\cB$ is a two-sided Brownian motion, that is $\cB(0)=0$ and $\{\cB(x),x\ge0\}$ and $\{\cB(-x),x\ge0\}$ are two independent standard Brownian motions. The precise meaning of a mild solution will be given in Section~\ref{sec:proof-SHE-main}.

We define $\kpz(t,x)=\log \she(t,x)$ which is commonly interpreted as the Cole-Hopf solution to the stationary KPZ equation
\begin{align*}
	\partial_t \kpz &= \mfrac12 \partial_x^2 \kpz + |\partial_x \kpz |^2 + \wh,
	\\
	\kpz(0,x) &= \cB(x).
\end{align*}
For $t_1,t_2 \geq 0$ and $x_1,x_2 \in \re$, we define
\begin{align*}
	\cRKPZeq(t_1,t_2;x_1,x_2)
	&=
	\CRvv\, (\kpz(t_1,x_1),\kpz(t_2,x_2)).
\end{align*}
\begin{theorem}\label{thm:main-SHE}
	For all $a\ge 1$ and all $x_1,x_2 \in \re$, we have
	\begin{align*}
		\lim_{s\to\infty}
		\cRKPZeq(s ,a s; x_1,x_2)
		&=
		\rhoKPZ(a).
	\end{align*}
\end{theorem}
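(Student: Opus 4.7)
The plan is to apply the general scheme from the warm-up in Section \ref{sec:warm-up}, following the same template as for Theorem \ref{thm:main-KPZ-FP}. First I would establish the space-time stationarity of the Cole-Hopf solution $\kpz(t,x)=\log\she(t,x)$ with Brownian initial data, which is the analog for the KPZ equation of Proposition \ref{prop:st_stat_KPZ_FP}. This rests on the classical invariance of Brownian spatial increments under the SHE/KPZ dynamics and lets one apply Lemma \ref{lem:stat-CVTV} to obtain the covariance-to-variance reduction
\[
\cRKPZeq(s,as;x_1,x_2)=\frac{1}{2}\cdot\frac{\Vvv\,\kpz(s,x_1)+\Vvv\,\kpz(as,x_2)-\Vvv\,\kpz((a-1)s,x_2-x_1)}{\sqrt{\Vvv\,\kpz(s,x_1)\cdot\Vvv\,\kpz(as,x_2)}}.
\]

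Next I would invoke the one-point variance asymptotic $\Vvv\,\kpz(t,0)\sim C\,t^{2/3}$ as $t\to\infty$ for the stationary KPZ equation. The order of magnitude is due to Bal\'azs-Quastel-Sepp\"al\"ainen, while the precise asymptotic with an explicit constant $C$ (the variance of the Baik-Rains law) follows by combining the BCFV convergence in distribution of $t^{-1/3}(\kpz(t,0)-v_\infty t)$ to Baik-Rains with the uniform integrability of the second moment provided by the BQS $L^p$ bounds.

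Third, I need to show that the spatial shifts are asymptotically irrelevant: for $x\in\re$ fixed, $\Vvv\,\kpz(t,x)=\Vvv\,\kpz(t,0)+O_x(t^{1/3})$ as $t\to\infty$. Indeed, by space-time stationarity $\Vvv(\kpz(t,x)-\kpz(0,x))=\Vvv\,\kpz(t,0)$, and writing $\kpz(t,x)=(\kpz(t,x)-\kpz(0,x))+\cB(x)$ together with $\Vvv\,\cB(x)=|x|$ and Cauchy--Schwarz bounds the difference by $O(t^{1/3})$, negligible at scale $t^{2/3}$. Similarly $\Vvv\,\kpz((a-1)s,x_2-x_1)\sim C((a-1)s)^{2/3}$ for $a>1$, and is bounded by $|x_2-x_1|$ when $a=1$. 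Plugging the three asymptotics into the covariance-to-variance reduction, the constant $C$ cancels and one gets
\[
\lim_{s\to\infty}\cRKPZeq(s,as;x_1,x_2)=\frac{1+a^{2/3}-(a-1)^{2/3}}{2a^{1/3}}=\rhoKPZ(a),
\]
with the $a=1$ case reducing directly to $1=\rhoKPZ(1)$ since the third variance is $O(1)$.

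The main obstacle is the precise one-point variance asymptotic $\Vvv\,\kpz(t,0)\sim C\,t^{2/3}$: upgrading from the BQS order-of-magnitude bound to a genuine limit requires marrying the distributional convergence to Baik-Rains with sharp tail estimates to secure uniform integrability, and this is where the technical content sits. The space-time stationarity of the Cole-Hopf solution is a secondary but nontrivial issue that should be handled via the invariance of Brownian increments under the SHE dynamics together with a standard mollification/approximation argument.
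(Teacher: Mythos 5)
Your overall structure matches the paper's proof: covariance-to-variance reduction via space-time stationarity of $\kpz$ (Proposition \ref{thm:eq-in-law-SHE} plus Lemma \ref{lem:stat-CVTV}), a precise one-point variance limit at scale $t^{2/3}$ obtained by combining the Baik--Rains distributional convergence from \cite{BCFV} with a uniform integrability input, and a Cauchy--Schwarz argument showing fixed spatial shifts contribute only $O(t^{1/3})$. Your decomposition $\kpz(t,x)=(\kpz(t,x)-\kpz(0,x))+\cB(x)$ is a harmless temporal variant of the paper's spatial decomposition $\kpz(s,x)=\kpz(s,0)+(\kpz(s,x)-\kpz(s,0))$; both give the same conclusion.

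There is, however, a substantive misattribution in the one technically hard step. You claim the uniform integrability needed to upgrade the Baik--Rains convergence in law to convergence of the second moment is supplied by ``BQS $L^p$ bounds.'' Bal\'azs--Quastel--Sepp\"al\"ainen prove two-sided variance bounds $\Vvv\,\kpz(t,0)\asymp t^{2/3}$, i.e.\ $L^2$ control at the right order, which is necessary but not sufficient: uniform integrability of $\{(t^{-1/3}(\kpz(t,0)-v_\infty t))^2\}$ requires control strictly beyond second moments (e.g.\ a uniform $L^{2+\epsilon}$ bound or uniform tail estimates). The paper obtains this from the upper and lower tail bounds in \cite[Theorems 1.12 and 1.13]{CG}, which yield the moment convergence $\esp[h(t,0)^p]\to\esp[X^p]$ in \cite[Corollary 1.14]{CG}. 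You should cite Corwin--Ghosal here; with that substitution, the argument is complete and coincides with the paper's.
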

\begin{rk}
An inspection at the proof shows that the result is still valid if we consider $x_1,x_2=o(s^{2/3})$. The case $x_1,x_2=O(s^{2/3})$ is expected to lead to a different behavior and is discussed in Subsection~\ref{sec:results-open-questions-KPZ-fixed-point} along with some other open questions.
\end{rk}
		
Once again, the proof of Theorem~\ref{thm:main-SHE} is based on the following lemma.
\begin{lemma}\label{thm:CVTV-SHE}
	The Cole-Hopf solution $\kpz$ to the KPZ equation follows the covariance-to-variance reduction.
\end{lemma}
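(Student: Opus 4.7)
The natural strategy is to apply Lemma \ref{lem:stat-CVTV}: once one shows that the Cole-Hopf solution $\kpz=\log\she$ is space-time stationary, the covariance-to-variance reduction follows immediately. Concretely, it suffices to show that for every $(t_0,x_0)\in[0,\infty)\times\bR$, the shifted field
\[
\bigl(\kpz(t+t_0,x+x_0)-\kpz(t_0,x_0)\bigr)_{t\ge 0,\,x\in\bR}
\]
has the same law as $\bigl(\kpz(t,x)-\kpz(0,0)\bigr)_{t\ge 0,\,x\in\bR}$. Equivalently, working at the level of $\she$, one wants the law of $\she(t+t_0,x+x_0)/\she(t_0,x_0)$ as a process in $(t,x)$ to agree with that of $\she(t,x)/\she(0,0)$.

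\textbf{Spatial shift.} The shift in $x_0$ is straightforward. Since $\cB$ is a two-sided Brownian motion, the increment process $\cB(\cdot+x_0)-\cB(x_0)$ is again a two-sided Brownian motion. Therefore $\she(0,\cdot+x_0)/\she(0,x_0)=e^{\cB(\cdot+x_0)-\cB(x_0)}$ has the same law as $\she(0,\cdot)$. Combined with the spatial translation invariance of the white noise $\wh$, this gives invariance of the whole field under the spatial shift by $x_0$.

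\textbf{Temporal shift.} The shift in $t_0$ is more delicate and relies on the Markov property of the SHE together with the classical one-point-in-time stationarity of the KPZ profile. By the Markov property, conditionally on $\she(t_0,\cdot)$, the process $(\she(t+t_0,x))_{t\ge 0,x\in\bR}$ is distributed as the mild solution of the SHE started from $\she(t_0,\cdot)$ and driven by the time-shifted white noise $\wh(\cdot+t_0,\cdot)$, which is itself a space-time white noise independent of $\she(t_0,\cdot)$. The key input is the one-point-in-time stationarity of the stationary KPZ equation: for every $t\ge 0$, the spatial increment process $x\mapsto\kpz(t,x)-\kpz(t,0)$ is a two-sided Brownian motion, equivalently $\she(t,\cdot)/\she(t,0)\overset{(d)}{=}\she(0,\cdot)$. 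Applying this at $t=t_0$, the rescaled initial data $\she(t_0,\cdot+x_0)/\she(t_0,x_0)$ is equidistributed with $\she(0,\cdot)$, and since the driving noise for times $\ge t_0$ is an equidistributed white noise, the shifted field has the same law as the unshifted one.

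\textbf{Main obstacle.} The only nontrivial ingredient is the one-point-in-time stationarity, i.e.\ the preservation of Brownian spatial increments under the KPZ flow at all times $t>0$. This is a classical property of the stationary KPZ equation that I expect is either imported from the literature or established here as a separate proposition analogous to Proposition \ref{prop:st_stat_KPZ_FP} for the KPZ fixed point. Given this input, the lemma follows by combining the spatial and temporal arguments above with Lemma \ref{lem:stat-CVTV}.
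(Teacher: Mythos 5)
Your proposal is correct and follows essentially the same route as the paper: you reduce the lemma to showing space-time stationarity of $\kpz=\log\she$ and then invoke Lemma \ref{lem:stat-CVTV}, with the temporal shift handled via the Markov (flow) property of the mild solution together with the fact that $x\mapsto\kpz(t,x)-\kpz(t,0)$ has Brownian increments for each fixed $t$ (cited in the paper from \cite{BG,FQ}), and with the shifted white noise independent of the past. This is precisely the content of the paper's Proposition \ref{thm:eq-in-law-SHE}, whose proof uses the flow property to express $\she(t+t_0,x+x_0)/\she(t_0,x_0)$ as a mild solution started from $\she(t_0,\cdot+x_0)/\she(t_0,x_0)$ and driven by the shifted, independent noise.
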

\begin{rk}
	We easily obtain the following asymptotics for $\rhoKPZ$:
	\begin{align*}
		\rhoKPZ(a)
		&\approx
		\frac{1}{2a^{1/3}},
		\quad
		a\to\infty,
		\\
		1-\rhoKPZ(a)
		&\approx
		\mfrac12 (a-1)^{2/3},
		\quad
		a\to 1^+.
	\end{align*}
	These match the bounds from \cite{CGH20} presented at the end of Section \ref{sec:previous}.
\end{rk}

\subsection{TASEP and LPP}\label{sec:LPP}


We consider the totally asymmetric exclusion process (TASEP) on $\bZ$ with initial condition given by i.i.d.\ Bernoulli random variables with parameter $\frac12$. We denote the occupation variables by $\eta_t=\{\eta(t,j):\, t\geq 0, j\in \ze \}$ where $\eta(t,j)=1$ if there is a particle in site $j$ at time $t$ and $\eta(t,j)=0$ otherwise. We denote by $N_t(j)$ the number of particles that have jumped from $j$ to $j+1$ during the time interval $[0,t]$. We then define the height function $h$ by
\begin{align*}
	h(t,j)
	=
	\left\lbrace
		\begin{array}{ll}
			2N_t(0) - \sum^j_{\ell=1} (1-2\eta(t,\ell)), & \text{if} \, j\geq 1,
			\\
			2N_t(0) & \text{if} \, j=0,
			\\
			2N_t(0) - \sum^{-1}_{\ell=j} (1-2\eta(t,\ell)), & \text{if} \, j\leq -1.
		\end{array}
	\right.
\end{align*}
We define
\begin{align*}
	\cRexcl(s,t;j,k)
	=
	\CRvv\left(
		h(s,j), \, h(t,k)
	\right).
\end{align*}
\begin{theorem}\label{thm:main-TASEP}
	For all $j,k\in\ze$, we have
	\begin{equation*}
		\lim_{s\to\infty}
		\cRexcl(s,as;j,k)
		=
		\rhoKPZ(a).
	\end{equation*}
\end{theorem}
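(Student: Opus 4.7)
The strategy follows the general scheme in Section~\ref{sec:warm-up}. Under the Bernoulli-$\tfrac12$ initial condition, the particle configuration $\eta_t$ is in equilibrium, so its law is invariant both under the TASEP dynamics in time and under spatial translations. A standard argument combining these two symmetries shows that the joint law of the space-time increments $\{h(s+t,k+j)-h(s,k)\}_{t\ge 0,\,j\in \ze}$ does not depend on $(s,k)$; that is, $h$ is space-time stationary in the sense of Section~\ref{sec:summary}. Lemma~\ref{lem:stat-CVTV} then yields the covariance-to-variance reduction
\[
\cRexcl(s,as;j,k)
=
\frac12\cdot
\frac{\Vvv\, h(s,j)+\Vvv\, h(as,k)-\Vvv\, h((a-1)s,k-j)}{\sqrt{\Vvv\, h(s,j)\,\Vvv\, h(as,k)}},
\]
reducing the theorem to the analysis of the variance at a single space-time point.

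The main analytic input I would use is the asymptotic $\Vvv\, h(t,0)\sim \sigma^{2}\, t^{2/3}$ as $t\to\infty$, where $\sigma^{2}$ is the variance of the Baik-Rains distribution. Convergence in distribution of $t^{-1/3}(h(t,0)-v_\infty t)$ to Baik-Rains is a classical result (Pr\"ahofer-Spohn, Ferrari-Spohn, Balázs-Cator-Seppäläinen). To upgrade this to convergence of variances, I would invoke the sharp upper- and lower-tail estimates for the stationary TASEP height function established in \cite{BFP}, which give uniform integrability of the squared rescaled variable and therefore convergence of the second moment.

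To pass from $\Vvv\, h(t,0)$ to $\Vvv\, h(t,j)$ for fixed $j$, I would use the spatial stationarity of $\eta_t$. Since the marginal law of $\eta_t$ is Bernoulli-$\tfrac12$ at every time $t\ge 0$, one has $|h(t,j)-h(t,0)|\le |j|$ almost surely, and in fact $\Vvv(h(t,j)-h(t,0)) = |j|$. A Cauchy-Schwarz bound then gives
\[
\Vvv\, h(t,j)=\Vvv\, h(t,0)+O(t^{1/3}),
\]
and applying this to each of the three variance terms in the covariance-to-variance identity and letting $s\to\infty$ yields
\[
\lim_{s\to\infty}\cRexcl(s,as;j,k)
=
\frac{1+a^{2/3}-(a-1)^{2/3}}{2\,a^{1/3}}
=
\rhoKPZ(a).
\]

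The principal obstacle in this plan is exactly the $L^{2}$ upgrade of the Baik-Rains convergence; this is where the tail estimates of \cite{BFP} are essential, as flagged in the introduction. Space-time stationarity, the covariance-to-variance reduction, and the spatial comparison are either folklore or direct applications of results already established in the excerpt, and so should not raise serious difficulties.
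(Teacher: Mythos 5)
Your proposal follows essentially the same route as the paper: establish space-time stationarity of $h$ (the paper does this carefully in Lemma~\ref{thm:TASEP-stationarity} via the flux and particle-count representations of the height function), deduce the covariance-to-variance reduction from Lemma~\ref{lem:stat-CVTV}, invoke the moment convergence to the Baik--Rains limit from \cite{BFP} (the paper's \eqref{eq:TASEP-convergence-moments}), and handle general spatial end-points by bounding $\Vvv(h(t,j)-h(t,0))=|j|$ and applying Cauchy--Schwarz, exactly as in the proof of Theorem~\ref{thm:main-KPZ-FP}. The one place you are terse — ``a standard argument combining these two symmetries'' — is precisely the content of Lemma~\ref{thm:TASEP-stationarity}, which requires some care because $N_t(0)$ accumulates flux from time $0$; the rest matches the paper's argument step for step.
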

This proves the conjectured Formula 2.6 from \cite{FS16}.

We also consider a last-passage percolation model that can be mapped to TASEP. Let $\{W(i,j):\, i,j \geq 0\}$ be a collection of independent random variables such that
\begin{align*}
	W(i,j) &\sim \mathrm{Exp}(1), \quad i,j \ge 1,
	\\
	W(i,0) &\sim \mathrm{Exp}(1/2), \quad i \geq 1,
	\\
	W(0,j) &\sim \mathrm{Exp}(1/2), \quad j \geq 1
    \\
	W(0,0) &=0 .
\end{align*}
Let $m,n \geq 0$ and set $\cS(m,n)$ to be the collection of up-right paths $S$ with $S(0)=(0,0)$ and $S(m+n)=(m,n)$. The passage time of such $S$ is defined as
\begin{align*}
	T(S) = \sum_{k=1}^{n+m} W(S(k)).
\end{align*}
Finally, we define
\begin{align*}
	L(m,n) = \max\{T(S):\, S\in\cS(m,n)\}.
\end{align*}
Let
\begin{align*}
	\cRLPP(m_1,m_2;n_1,n_2)
	=
	\CRvv\left(
		L(m_1,n_1),\, L(m_2,n_2)
	\right).
\end{align*}
\begin{theorem}\label{thm:main-LPP}
	Let $a\ge 1$. Then,
\begin{align*}
		\lim_{n\to\infty}
		\cRLPP(n,\lfloor an \rfloor;n,\lfloor an \rfloor)
		=
		\rho^{\text{KPZ}}(a).
	\end{align*}
\end{theorem}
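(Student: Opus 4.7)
The proof follows the universal scheme outlined in Section \ref{sec:warm-up}: one establishes the covariance-to-variance reduction (CVTV) for the two-parameter passage-time process $L$, derives the variance asymptotic $\Vvv\, L(n,n) \sim C n^{2/3}$ for the Baik-Rains variance $C$, and then combines them.

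For the CVTV step, the stationary LPP with the boundary weights of Section \ref{sec:LPP} is in bijection with TASEP started from the Bernoulli($1/2$) initial condition, the same bijection that underlies Theorem \ref{thm:main-TASEP}. The space-time stationarity of the density-$1/2$ TASEP height function translates through this bijection into a two-parameter stationarity for $L$: for $(m_1,n_1)\leq (m_2,n_2)$ componentwise,
\begin{equation*}
\Vvv\big(L(m_2,n_2)-L(m_1,n_1)\big) = \Vvv\, L(m_2-m_1,n_2-n_1).
\end{equation*}
The elementary computation in the proof of Lemma \ref{lem:stat-CVTV} then yields
\begin{equation*}
\Cvv(L(m_1,n_1),L(m_2,n_2)) = \frac{1}{2}\big[\Vvv\, L(m_1,n_1)+\Vvv\, L(m_2,n_2)-\Vvv\, L(m_2-m_1,n_2-n_1)\big].
\end{equation*}

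For the variance asymptotic, the classical result for stationary LPP gives $n^{-1/3}(L(n,n)-4n)\Rightarrow \mathrm{BR}$, where $\mathrm{BR}$ is the Baik-Rains distribution. To upgrade this distributional statement to convergence of variances, one needs uniform integrability of $\{n^{-2/3}(L(n,n)-4n)^2\}_n$, which follows from the sharp upper-tail bounds of \cite{BFP} and the lower-tail bounds of \cite{CG}. This delivers $n^{-2/3}\Vvv\, L(n,n)\to C:=\Vvv(\mathrm{BR})$.

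Setting $m_n:=\lfloor an\rfloor$ and combining the two ingredients, the CVTV identity reads
\begin{equation*}
\CRvv(L(n,n),L(m_n,m_n)) = \frac{\Vvv\, L(n,n)+\Vvv\, L(m_n,m_n)-\Vvv\, L(m_n-n,m_n-n)}{2\sqrt{\Vvv\, L(n,n)\cdot\Vvv\, L(m_n,m_n)}}.
\end{equation*}
Since $m_n/n\to a$ and $(m_n-n)/n\to a-1$, the variance asymptotic gives the limit $\frac{1+a^{2/3}-(a-1)^{2/3}}{2a^{1/3}}=\rho^{\text{KPZ}}(a)$. The main technical obstacle is the uniform integrability required for the variance convergence, which relies essentially on the moderate-deviation tail estimates of \cite{BFP, CG}; the CVTV step, by contrast, is a direct consequence of the 2-parameter stationarity inherited from TASEP, and the final limit computation is elementary.
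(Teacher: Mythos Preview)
Your overall scheme---covariance-to-variance reduction plus variance asymptotic---is exactly the paper's. Two points of difference in how the steps are justified are worth noting.

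For the CVTV step, you invoke the TASEP bijection, but the paper instead proves the two-parameter stationarity of $L$ directly (Lemma~\ref{thm:LPP-stationarity}), via the Burke-type property that the increments $L(n_1,j)-L(n_1,j-1)$ and $L(i,m_1)-L(i-1,m_1)$ are again i.i.d.\ $\mathrm{Exp}(1/2)$ variables independent of the bulk weights beyond $(m_1,n_1)$; a geodesic argument then gives $L(\cdot,\cdot)-L(m_1,n_1)\overset{\mathrm{law}}{=}L(\cdot-m_1,\cdot-n_1)$ without passing through TASEP. Your bijection route can be made to work, but it requires care, since the correspondence is between passage times and particle hitting times rather than height-function increments directly.

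For the variance step, your citation of \cite{CG} is misplaced: that reference treats the KPZ equation, not LPP or TASEP. The paper instead uses the one-dimensional distributional identity $\p[L(n,n)\leq t]=\p[h(t,0)\geq n/2]$ to transfer moment convergence from the rescaled TASEP height function---where it is provided by \cite{BFP}, see \eqref{eq:TASEP-convergence-moments}---to the rescaled $L(n,n)$. This buys the uniform integrability you need without separate LPP tail bounds.
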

Both models satisfy the covariance-to-variance reduction which is a consequence of their space-time shift invariance by Lemma \ref{lem:stat-CVTV}.

\subsection{Semi-discrete directed polymers in a Brownian environment}\label{sec:results-semi-discrete}



\subsubsection{The partition function}\label{sec:results-semi-discrete-pf}


We introduce the model of semi-discrete directed polymers in a Brownian random environment from \cite{OY}.

We start defining the point-to-point partition function. Let $\{B_i(\cdot): i\geq 1\}$ be a collection of two-sided standard one-dimensional Brownian motions. For $0\le m< n$ and $s,t\in\re$ so that $s<t$ we define
\begin{align*}
	&
	\Delta(s,m;t,n):=\{(s_{m+1},\cdots,s_{n-1}):\, s<s_{m+1}<\dotsm<s_{n-1}<t \},\\
	&
	\Delta(t,n):=\{(s_0,\cdots,s_{n-1}):\, -\infty<s_{0}<\dotsm<s_{n-1}<t \}.
\end{align*}
For $\beta>0$, $1\le m+1 < n $ and $0\le s<t$, we define the point-to-point partition function
\begin{align}\label{eq:pf-ptp}
	Z^{\beta}&(s,m;t,n)\\
	&=
	\nonumber
\int_{\Delta(s,m;t,n)}
	\exp\bigl[ \beta \left\{B_{m+1}(s,s_{m+1}) + B_{m+2}(s_{m+1},s_{m+2})+\dotsm + B_n(s_{n-1},t)\right\}\bigr] \,ds_{m+1,n-1},
\end{align}
where we write $ds_{m+1,n-1}=ds_{m+1} \cdots ds_{n-1}$ and $B_k(u,v):=B_k(v)-B_k(u)$.

Fix a new two-sided standard one-dimensional Brownian motions $B_0$, independent of $\{B_i(\cdot): i\geq 1\}$. The stationary partition function $Z^{\beta,\theta}(t,n)$ is then defined as
\begin{align*}
	Z^{\theta,\beta}(t,n)
	&=
	\int_{\Delta(t,n)}
	\exp\bigl[ -\beta B_0(s_0) + \theta s_0  +  \beta \left\{B_1(s_0,s_1) +\dotsm + B_n(s_{n-1},t)\right\}\bigr] \,ds_{0,n-1},
\end{align*}
where we wrote $ds_{0,n-1}=ds_0 \cdots ds_{n-1}$.
Note that we have the representation
\begin{align*}
	Z^{\beta,\theta}(t,n)
	=
	\int^t_{-\infty} e^{-\beta B_0(s_0) + \theta s_0} Z^{\beta}(s_0,0;t,n)\, ds_0.
\end{align*}
The term stationary refers to a specific structure highlighted in Section~\ref{sec:proof-polymers-stationarity}.


We now state the covariance-to-variance reduction for the directed polymer model.
As $Z^{\theta,\beta}(t,n)$ is almost surely positive, we can define $H^{\theta,\beta}(t,n)=\log Z^{\theta,\beta}(t,n)$.
We have the following.
\begin{lemma}\label{thm:CVTV-polymers}
The logarithm of the stationary partition function $H^{\theta,\beta}$ follows the covariance-to-variance reduction.
\end{lemma}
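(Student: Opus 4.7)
The plan is to deduce Lemma~\ref{thm:CVTV-polymers} from Lemma~\ref{lem:stat-CVTV} by showing that $(t,n)\mapsto H^{\theta,\beta}(t,n)$ is space-time stationary, meaning that the joint law of its increments is invariant under simultaneous shifts in $t\in\bR$ and $n\in\bN$. Once this is established, the covariance-to-variance reduction is immediate from Lemma~\ref{lem:stat-CVTV}.

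The time-shift invariance is the easy direction and follows from translation invariance of the Brownian environment. For fixed $T\in\bR$, set $\widetilde{B}_i(\cdot):=B_i(\cdot+T)-B_i(T)$ for $i\ge 0$; this is again an independent family of two-sided standard Brownian motions with the same law as $(B_i)_{i\ge 0}$. Substituting $s_k\mapsto s_k+T$ in the integrals defining $Z^{\theta,\beta}(t+T,n)$ and the inner $Z^{\beta}$ factor yields the pathwise identity
\[
Z^{\theta,\beta}(t+T,n)\;=\;e^{-\beta B_0(T)+\theta T}\,\widetilde{Z}^{\theta,\beta}(t,n),
\]
where $\widetilde{Z}^{\theta,\beta}$ is built from $(\widetilde{B}_i)_{i\ge 0}$ exactly as $Z^{\theta,\beta}$ is built from $(B_i)_{i\ge 0}$. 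Taking logarithms, the prefactor contributes only a $(t,n)$-independent additive term which cancels in every increment. Combined with the equality in distribution $\widetilde{Z}^{\theta,\beta}\stackrel{d}{=}Z^{\theta,\beta}$ at the level of processes, this gives the invariance of the increment law under $t\mapsto t+T$.

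The shift invariance in the column index $n$ is the main obstacle and is a Burke-type property for the stationary O'Connell--Yor polymer. The precise choice of boundary weight $e^{-\beta B_0(s_0)+\theta s_0}$, with drift $\theta$ tuned to the environment, is exactly what makes the model stationary in $n$: it ensures that the distribution of the column increments $H^{\theta,\beta}(t,n)-H^{\theta,\beta}(t,n-1)$ reproduces the boundary Brownian structure, so that shifting all column indices by $N$ produces a process whose joint increment law agrees with that of the original one. This is the content of the structure recalled in Section~\ref{sec:proof-polymers-stationarity} and can be obtained from the intertwining/Burke-type relations available for this model; I would simply invoke that established stationarity here rather than re-derive it.

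Combining both invariances shows that $H^{\theta,\beta}$ is space-time stationary on $\bR\times\bN$. Lemma~\ref{lem:stat-CVTV} then yields the covariance-to-variance reduction asserted in the statement.
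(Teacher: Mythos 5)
Your overall architecture matches the paper: establish that $H^{\theta,\beta}=\log Z^{\theta,\beta}$ is space-time stationary in the sense that its increments are shift-invariant, then invoke Lemma~\ref{lem:stat-CVTV}. Your time-shift argument is correct and complete: the pathwise identity $Z^{\theta,\beta}(t+T,n)=e^{-\beta B_0(T)+\theta T}\,\widetilde Z^{\theta,\beta}(t,n)$ with $\widetilde B_i(\cdot)=B_i(\cdot+T)-B_i(T)$ is exactly right, and the prefactor cancels in increments while $(\widetilde B_i)_i\overset{d}{=}(B_i)_i$ gives the invariance. The factorization ``time shift, then space shift'' is also logically sound, since a joint shift is the composition of the two.

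The gap is that the column-shift invariance, which you yourself identify as the main obstacle, is never proved; it is asserted by appeal to ``Burke-type relations.'' This is not something the paper hands you as a black box: Lemma~\ref{thm:stationary-structure} only records pointwise Burke facts (i.i.d.\ Gamma column increments $r_k(t)$, Brownian time process $Y_n$, and a specific partial independence), and the passage from these to the full process-level shift invariance is precisely the content of Proposition~\ref{thm:space-time-stationarity-polymers}, of which the present lemma is a corollary. The paper proves that proposition by expanding $Z^{\theta}(s+t,m+n)$ through the flow decomposition over intermediate times $s_m<\cdots<s_{m+n-1}$, introducing the ``restart boundary'' $e^{W(s,s+u)}=Z^{\theta}(s+u,m)/Z^{\theta}(s,m)\,e^{-\theta u}$, and using Burke's property to recognize $W(s,s+\cdot)=-Y_m(s,s+\cdot)$ as a fresh two-sided Brownian motion independent of the shifted upper-level environment $B_{j+m}(s+\cdot)$, $j\ge 1$; after a change of variables this is manifestly a copy of $Z^{\theta}$. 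Your sketch does not supply this decomposition, and the heuristic you give (that i.i.d.\ column increments ``reproduce the boundary Brownian structure'') conflates the Gamma-distributed space increments $r_k$ with the Brownian boundary in the time direction $Y_m$; what is actually needed is the joint independence of the restart Brownian boundary from the environment above level $m$, which is the hard part. To close the gap you would need to carry out the paper's argument (or an equivalent one), not just cite the section where it lives.
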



\subsubsection{The intermediate disorder regime}\label{sec:results-semi-discrete-idr}


The relation between the polymer model and the stochastic heat equation is given by the following scaling limits known as intermediate disorder regime.
We define the rescaled stationary partition function as
\begin{eqnarray*}
	\she_n^{\textsc{st}}(t,x):=e^{-\sqrt{n}(\sqrt{n}t-x)-\tfrac{1}{2}(\sqrt{n}t-x)-\frac12 n \log n }Z^{\beta_n,\theta_n}(t n-x\sqrt{n},\lfloor t n \rfloor ),
\end{eqnarray*}
where $\beta_n = n^{-1/4}$ and $\theta_n = 1+\tfrac{\beta^2_n}{2}$ and here and in what follows, we systematically omit the integer part from the notation. %

%
Let $\kpz_n = \log \she_n^{\textsc{st}}$. This is known as the intermediate disorder scaling and from \cite{JM}, we know that $\kpz_n \Rightarrow \kpz$ as $n\to\infty$ in the locally uniform topology.
The first result of this type was on the Cole-Hopf level \cite{AKQ}. The result in \cite{JM} shows convergence of the discrete gradients of $\kpz_n$ to the energy solution to the stochastic Burgers equation i.e.\ the space-derivative of the KPZ equation. The convergence takes places in the space of distributions. However, the arguments of \cite[Section 6.4]{GJ-arma} can be easily adapted to transfer the result on the gradient convergence to the one of the KPZ to a convergence of the whole path $\kpz_n$ to KPZ. We include some heuristics on the intermediate disorder regime in Appendix \ref{app:idr} aimed at readers who are not familiar with this setting.

We can now state our Theorem on aging for directed polymers in the intermediate disorder regime. Let
\begin{align*}
	\cR_n(s,t;x,y)
	=
	{\CRvv\left( \kpz_n(s,x),\kpz_n(t,y)\right)}.
\end{align*}
\begin{theorem}\label{thm:main-polymers}
	Let $0\leq s < t$ and $x,y\in \re$. Then,
	\begin{align*}
		\lim_{n\to\infty} \cR_n(s,t;x,y) = \cR^{\text{KPZ}_{EQ}}(s,t;x,y).
	\end{align*}
\end{theorem}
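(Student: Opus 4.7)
The plan is to invoke the covariance-to-variance reduction on both sides and then reduce the statement to a convergence of variances. By Lemma~\ref{thm:CVTV-polymers}, $H^{\theta_n,\beta_n}$ satisfies the covariance-to-variance reduction. Since $\kpz_n(t,x)$ differs from $H^{\theta_n,\beta_n}(tn - x\sqrt{n}, \lfloor tn\rfloor)$ only by a deterministic function of $(t,x,n)$, it inherits the CVTV property. Combined with CVTV for $\kpz$ (Lemma~\ref{thm:CVTV-SHE}), the statement is equivalent to
$$
\lim_{n\to\infty} \Vvv\, \kpz_n(u,z) = \Vvv\, \kpz(u,z)
$$
for each $(u,z) \in \{(s,x),(t,y),(t-s,y-x)\}$, provided that the limiting denominators are bounded away from zero, which follows from the non-degeneracy of $\kpz(u,z)$ for $u>0$ under stationary initial data.

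Distributional convergence of $\kpz_n$ to $\kpz$ at each space-time point is already available: the intermediate-disorder limit of \cite{JM}, extended as in \cite[Section~6.4]{GJ-arma}, yields $\kpz_n \Rightarrow \kpz$ in the locally uniform topology. The missing ingredient is uniform integrability of $\kpz_n(u,z)^2$. Here I would bring in Talagrand's Gaussian concentration in the Malliavin formalism: regarding $\kpz_n(u,z) = \log Z^{\beta_n,\theta_n}(\cdot)$ as a functional of the underlying family of two-sided Brownian motions $\{B_i\}_{i\geq 0}$, one computes its Malliavin derivative and derives an $L^2$-norm bound on it that stays uniformly bounded along the intermediate-disorder scaling $\beta_n=n^{-1/4}$, $\theta_n = 1+\beta_n^2/2$. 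The classical concentration estimate then yields sub-Gaussian tails for $\kpz_n(u,z) - \bE[\kpz_n(u,z)]$, uniformly in $n$, and hence uniform $L^p$-boundedness for every $p\geq 1$.

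Uniform $L^p$-boundedness gives uniform integrability of $\{\kpz_n(u,z)^2\}_n$ and of $\{\kpz_n(u,z)\}_n$. Combined with the distributional convergence at each point, this upgrades to convergence of the first and second moments, and hence of the variances $\Vvv\,\kpz_n(u,z) \to \Vvv\,\kpz(u,z)$ for all relevant $(u,z)$. Substituting into the CVTV expressions for $\cR_n$ and $\cRKPZeq$ closes the proof.

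The main obstacle is the uniform Malliavin derivative bound. The stationary partition function combines two sources of randomness with rather different scaling behaviour: the $n$ bulk Brownian motions $B_1,\ldots,B_n$ driving the polymer, and the tilted initial Brownian motion $B_0$ together with the drift $\theta_n$. Obtaining a derivative bound that does not blow up under the $\beta_n = n^{-1/4}$ scaling requires carefully identifying cancellations between these two pieces, as well as handling the dependence of the polymer endpoint $s_0 \in (-\infty, t)$ on the environment, and the resulting argument is the technical heart of the paper flagged in Section~\ref{sec:summary}.
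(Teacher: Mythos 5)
Your high-level reduction is exactly right: the theorem follows from the covariance-to-variance reduction for both $\kpz_n$ (Lemma~\ref{thm:CVTV-polymers}) and $\kpz$ (Lemma~\ref{thm:CVTV-SHE}) plus convergence of the variances, and the missing ingredient is uniform integrability of $\{\kpz_n(u,z)^2\}_n$. You also correctly identify Malliavin/Talagrand concentration as the engine. However, the mechanism you propose --- directly bounding the Malliavin derivative of $\kpz_n = \log Z^{\beta_n,\theta_n}$ --- does not work as stated, and this is the genuine gap.

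Concretely, the Malliavin derivative of $\log Z$ at $(s,k)$ is $\beta_n\langle\mathbf 1_{X_s=k}\rangle$, so
\[
\|D\log Z\|_{H_1}^2 \;=\; \beta_n^2\,\langle L_{\cdot}(X,\tilde X)\rangle^{\otimes 2},
\]
the $\beta_n^2$-scaled quenched expected overlap. To apply the Gaussian concentration inequality of Theorem~\ref{thm:gaussian-concentration} you need this random quantity in $L^\infty(\Omega)$, and the only deterministic bound is the trivial one: the overlap is at most the total time $t_n - x\sqrt n\sim tn$, giving $\|D\log Z\|_{H_1}^2\le\beta_n^2 t_n\sim t\sqrt n\to\infty$. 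So the direct $L^\infty$ bound blows up along the intermediate-disorder scaling. If instead you settle for an $L^2(\Omega)$ bound (which \emph{is} uniformly bounded, since $\beta_n^2\,\esp\langle L\rangle=O(1)$ by estimates of the type in Lemma~\ref{thm:estimates-Poisson}), you only recover a Poincar\'e-type variance bound --- neither sub-Gaussian tails nor the higher-moment control needed for uniform integrability of $\kpz_n^2$. The paper gets around exactly this obstruction by not applying Talagrand's inequality to $\log Z$ at all. It instead introduces the good event $A_n(t,x,K)$ on which both $\she_n\ge\frac12\esp[\she_n]$ and the quenched overlap is $\le K\sqrt n$ (so that $\beta_n^2\cdot K\sqrt n=K$ is order one), shows $\p[A_n(t,x,K)]\ge\delta>0$ uniformly, and applies concentration to the $H_1$-distance function $q_{A_n}$ --- which is $1$-Lipschitz, hence has Malliavin derivative bounded by $1$ trivially. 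The pointwise bound $\log\she_n(B)\ge\log\esp[\she_n]-\log 2-\sqrt K\,q_{A_n}(B)$ then transfers the sub-Gaussian tail of $q_{A_n}$ into a uniform lower-tail bound for $\log\she_n$. This distance-to-a-good-set device is the resolution of the difficulty you flag but do not resolve. A second, more minor omission: the paper first reduces via $|\kpz_n|^p\le C(\she_n^{\textsc{st}}+(\she_n^{\textsc{st}})^{-1})$ and a Jensen/Cauchy--Schwarz argument to uniform negative moments of the \emph{point-to-point} partition function, which is where the concentration bound is actually applied; your sketch treats the stationary partition function monolithically.
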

Combining the above with Theorem~\ref{thm:main-SHE}, we get the statement
\begin{align*}
	\lim_{ s\to\infty}\lim_{n\to\infty} \cR_n(s,as;x,y)
	=
	\frac{1+a^{2/3}-(a-1)^{2/3}}{2a^{1/3}}.
\end{align*}

The key element of the proof of Theorem \ref{thm:main-polymers} is the convergence of the moments of $\kpz_n$ which is of independent interest and constitutes the most technical part of this work. The proof uses concentration inequalities based on Malliavin calculus to obtain uniform estimates on the lower tails of $\she_n$.
\begin{theorem}\label{thm:convergence-moments-polymers}
	For all $p>0$, $t\geq 0$ and $x\in\re$, we have
	\begin{align*}
		\lim_{n\to\infty}
		\esp\left[
			\kpz_n(t,x)^p
		\right]
		=
		\esp\left[
			\kpz(t,x)^p
		\right].
	\end{align*}
\end{theorem}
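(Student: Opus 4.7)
The plan is to upgrade the weak convergence $\kpz_n(t,x) \Rightarrow \kpz(t,x)$ from Subsection \ref{sec:results-semi-discrete-idr} to convergence of $p$-th moments via uniform integrability. Concretely, it is enough to establish that for every $p>0$ there exists $\delta>0$ with
\begin{align*}
	\sup_{n\geq 1} \esp\left[\left|\kpz_n(t,x)\right|^{p+\delta}\right] < \infty.
\end{align*}
Writing $\kpz_n = \log \she_n^{\textsc{st}}$ and splitting the log into positive and negative parts, this reduces to two uniform tail estimates on the rescaled partition function $\she_n^{\textsc{st}}(t,x)$: a uniform upper tail (equivalently, uniform bounds on positive moments) controlling $(\log_+ \she_n^{\textsc{st}})^p$, and a uniform lower tail (equivalently, uniform bounds on $\bP(\she_n^{\textsc{st}}(t,x) \le e^{-\lambda})$) controlling $(\log_- \she_n^{\textsc{st}})^p$.

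The upper-tail direction is the more tractable one. Since $\she_n^{\textsc{st}}$ is a suitably normalized O'Connell-Yor stationary partition function, its positive moments admit fairly explicit representations via the recursive Brownian-integral structure of $Z^{\beta_n,\theta_n}$ after conditioning on $B_0$. The intermediate disorder scaling $\beta_n = n^{-1/4}$, $\theta_n = 1+\beta_n^2/2$ is precisely calibrated so that $\esp[\she_n^{\textsc{st}}(t,x)^q]$ stays bounded in $n$ for each $q \geq 1$ and converges to the (finite) moments of $\she(t,x)$ which can be read off from the chaos/Feynman-Kac representation for the SHE. This yields $\sup_n \esp[(\log_+\she_n^{\textsc{st}}(t,x))^p]<\infty$ for every $p$.

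The lower tail is the heart of the argument. Regarding $F_n := \kpz_n(t,x) = \log \she_n^{\textsc{st}}(t,x)$ as a functional of the Gaussian environment $(B_0,B_1,\ldots,B_{\lfloor tn \rfloor})$, the strategy is to apply Gaussian concentration in the spirit of Üstünel-Zakai / Borell-Sudakov-Tsirelson: a uniform deterministic (or high-probability) bound $\|DF_n\|_{L^2}\leq K$ on the Malliavin derivative gives
\begin{align*}
	\bP\left(F_n \leq \mathrm{Med}(F_n) - \lambda\right)
	\leq
	e^{-\lambda^2/(2K^2)}, \qquad \lambda>0,
\end{align*}
and hence a uniform sub-Gaussian lower tail. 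Boundedness of the medians $\mathrm{Med}(F_n)$ is a consequence of the distributional convergence $F_n \Rightarrow \kpz(t,x)$ together with tightness of the limiting law. Since $DF_n = \she_n^{\textsc{st}}(t,x)^{-1} D\she_n^{\textsc{st}}(t,x)$ is a quenched polymer expectation of an explicit path-dependent functional, one must bound it uniformly in the environment.

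The principal obstacle is precisely the uniform bound on $\|DF_n\|_{L^2}$. Naive estimates scale with the polymer length $tn$ and are fatal for any hope of concentration. The required cancellation must come from the stationary structure: the boundary term $-\beta_n B_0(s_0) + \theta_n s_0$ creates an exponential weight that localizes the effective range of the $s_0$-integral, while inside the bulk the Malliavin derivatives with respect to each $B_i$ are weighted by polymer occupation times whose total mass along a path is bounded. Making this heuristic rigorous---uniformly in $n$ and with a suitable treatment of the exceptional event where the desired bound fails, handled via a union bound against a cruder a priori estimate for $F_n$---is where the Malliavin-calculus input of the paper is concentrated and constitutes the most technical step of the argument.
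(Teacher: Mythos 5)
Your overall strategy matches the paper's in broad outline: upgrade the distributional convergence $\kpz_n(t,x)\Rightarrow\kpz(t,x)$ to moment convergence via uniform integrability, reduce the upper direction to positive-moment bounds on $\she_n^{\textsc{st}}$, and attack the lower tail with Malliavin-calculus Gaussian concentration, correctly identifying that the naive bound on the Malliavin derivative of $\log\she_n^{\textsc{st}}$ scales like $n^{1/4}$ and is useless. However, the specific mechanism you propose for the lower tail is not what the paper does, and the fix you gesture at would not obviously close the gap.

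The paper does \emph{not} bound $\|D\kpz_n\|_{H_1}$, even on a high-probability event, and does \emph{not} appeal to Borell--Sudakov--Tsirelson concentration of $\kpz_n$ around its median. Instead it uses a Talagrand-type ``distance to a good set'' argument: one defines the event
$A_n(t,x,K)=\{\she_n(t,x)\geq\tfrac12\esp[\she_n(t,x)],\ \langle L_{t,x,n}\rangle\leq K\sqrt n\}$,
shows by Paley--Zygmund plus an overlap/pinning estimate that $\bP[A_n]\geq\delta>0$ uniformly in $n$ and $x$, proves via a change-of-measure computation that $\log\she_{n,\bar B+h}\geq\log\she_{n,\bar B}-\beta_n\sqrt{\langle L\rangle_{\bar B}}\,\|h\|_{H_1}$ (crucially with the overlap evaluated at $\bar B$, not at the shifted environment), and then applies Gaussian concentration not to $\kpz_n$ but to the $1$-Lipschitz distance functional $q_{A_n}$. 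The essential point this buys is that the overlap bound need only hold on a set of fixed positive probability, not with overwhelming probability; the concentration is then inherited for free from the Lipschitz property of $q_{A_n}$. Your proposed ``union bound against a cruder a priori estimate'' does not supply this mechanism, and without it your concentration inequality has no uniform $K$ to plug in. Two further, more minor divergences: (i) the paper sidesteps the $\log_+/\log_-$ split entirely via the one-shot bound $|\kpz_n|^p\leq C(p)(\she_n^{\textsc{st}}+(\she_n^{\textsc{st}})^{-1})$, reducing UI of \emph{every} power to uniform first and first-negative moments; and (ii) the concentration argument is run on the point-to-point partition function $\she_n(0,y;t,x)$, with the stationary $\she_n^{\textsc{st}}$ handled afterwards by Jensen and Cauchy--Schwarz over the $B_0$-integral, which keeps the Malliavin bookkeeping cleaner than working with $\she_n^{\textsc{st}}$ directly.
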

Note that, by Brownian scaling, it holds that
\begin{align*}
	Z^{\beta,\para}(t,n) = \beta^{-2n} Z^{1,\beta^{-2}\theta}(\beta^2 t, n).
\end{align*}
As a corollary of Lemma~\ref{thm:CVTV-polymers} and the estimates of Section~\ref{sec:proof-polymers-UI}, we obtain the following aging regime for polymers at a fixed temperature. Note that the parameter $\theta$ is still scaled with the size of the system.
Let
\begin{align*}
	R^{\theta,\beta}(s,t;m,n)
	&=
	{\CRvv(H^{\theta,\beta}(t,n),H^{\theta,\beta}(s,m))}.
\end{align*}
\begin{corollary}
	Let $\nu_n = \beta_n^{-2}\theta_n = \sqrt{n}+\frac{1}{2}$. Then,
	\begin{align*}
		\lim_{n\to\infty}
		R^{\nu_n,1}(s\sqrt{n}-x,t\sqrt{n}-y; sn, tn)
		=
		\cR^{\text{KPZ}_{EQ}}(s,t;x,y).
	\end{align*}
\end{corollary}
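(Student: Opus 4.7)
The plan is to reduce the statement to Theorem~\ref{thm:main-polymers} by invoking the Brownian scaling identity $Z^{\beta,\theta}(t,n) \stackrel{d}{=} \beta^{-2n} Z^{1,\beta^{-2}\theta}(\beta^2 t, n)$, read as a joint distributional equality in $(t,n)$, and matching the parameters to the intermediate disorder window $(\beta_n, \theta_n) = (n^{-1/4}, 1 + \beta_n^2/2)$. The key algebraic observation is that the choice $\beta = \beta_n$ transforms the external parameter as
\[
\beta_n^2 \nu_n = n^{-1/2}\left(\sqrt{n} + \tfrac{1}{2}\right) = 1 + \tfrac{1}{2\sqrt{n}} = \theta_n,
\]
so that, after rescaling, $Z^{1, \nu_n}$ is expressed in terms of the intermediate disorder partition function $Z^{\beta_n, \theta_n}$.

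Carrying out the bookkeeping, since $t'/\beta_n^2 = t'\sqrt{n}$, the identity $Z^{1,\nu_n}(t', m) \stackrel{d}{=} \beta_n^{2m}\, Z^{\beta_n,\theta_n}(t'\sqrt{n}, m)$ (holding jointly in $(t',m)$) specializes, at $t' = s\sqrt{n} - x$ and $m = sn$, to the time argument $t'\sqrt{n} = sn - x\sqrt{n}$ on the right-hand side, which is precisely the one appearing in the definition of $\she_n^{\textsc{st}}(s,x)$. Comparing with that definition, I would conclude that
\[
H^{\nu_n,1}(s\sqrt{n}-x,\, sn) \stackrel{d}{=} C_n(s, x) + \kpz_n(s, x),
\]
where $C_n(s, x)$ is a deterministic prefactor collecting $2sn \log \beta_n$ together with $\sqrt{n}(\sqrt{n}s - x) + \tfrac{1}{2}(\sqrt{n}s - x) + \tfrac{1}{2} n\log n$ inherited from the normalization of $\she_n^{\textsc{st}}$. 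The joint version of this identity over both endpoints $(s, x)$ and $(t, y)$ is valid because the Brownian rescaling is applied simultaneously to the entire family $\{B_i\}_{i \ge 0}$.

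Since the correlation coefficient is invariant under addition of deterministic terms, this joint identity gives
\[
R^{\nu_n,1}\bigl(s\sqrt{n}-x,\, t\sqrt{n}-y;\, sn, tn\bigr) = \cR_n(s, t; x, y),
\]
and Theorem~\ref{thm:main-polymers} identifies the limit as $\cR^{\text{KPZ}_{EQ}}(s, t; x, y)$. No substantive obstacle arises here beyond the careful tracking of the deterministic prefactors under the scaling; the entire analytic content, namely convergence of the two-point correlations of the semi-discrete polymer free energy to those of the KPZ equation, is already packaged in Theorem~\ref{thm:main-polymers} via Theorem~\ref{thm:convergence-moments-polymers} and the uniform integrability estimates of Section~\ref{sec:proof-polymers-UI}.
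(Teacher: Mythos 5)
Your proof is correct and follows the route the paper intends: apply the Brownian scaling identity $Z^{\beta_n,\theta_n}(t,n)\overset{\mathrm{law}}{=}\beta_n^{-2n}Z^{1,\nu_n}(\beta_n^2 t,n)$ jointly in the space-time parameters, absorb the resulting deterministic prefactors (which do not affect correlation coefficients), identify $R^{\nu_n,1}(s\sqrt{n}-x,t\sqrt{n}-y;sn,tn)$ with $\cR_n(s,t;x,y)$, and then invoke Theorem~\ref{thm:main-polymers}. The paper leaves the bookkeeping implicit, merely pointing to the scaling identity, Lemma~\ref{thm:CVTV-polymers}, and the estimates of Section~\ref{sec:proof-polymers-UI}; your verification of the parameter matching $\beta_n^2\nu_n=\theta_n$ and $\beta_n^2(sn-x\sqrt n)=s\sqrt n-x$ makes that reasoning explicit and is exactly what is needed.
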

See Subsection~\ref{sec:results-open-questions-polymers-fixed-beta} for a discussion of the expected aging behavior for fixed $\beta$ and $\theta$.

\section{The Edwards-Wilkinson universality class}


\subsection{The Edwards-Wilkinson model}

As a comparison, we consider the Edwards-Wilkinson equation, that is, the stochastic heat equation with additive noise. As in the case of the KPZ equation, here the two-sided Brownian motion is stationary as well. More precisely, we consider the mild solution to the stochastic partial differential equation
\begin{align*}	
    \partial_t \,\ou(t,x) =&\;  \frac12 \partial_x^2 \,\ou(t,x) +  {\wh},\\
	\ou(0,x) =&\;  \cB(x),
\end{align*}
where $\cB(x)$ is a two-sided Brownian motion and ${\wh}$ is a space-time white noise.
In this case, the solution is given explicitly by convolution of the noise with the heat kernel (see \eqref{eq:mild-EW}).
We define
\begin{align*}
	\cR^{\text{EW}}(t_1,t_2;x_1,x_2)
	&=
	\CRvv\, (\ou(t_1,x_1),\ou(t_2,x_2)).
\end{align*}
\begin{theorem}\label{thm:main-add-SHE}
	For  all $a\ge 1$ and $x,y\in \re$, we have
	\begin{eqnarray}\label{eq:add-SHE-limit}
		\lim_{s\to\infty} \cR^{\text{EW}}(s,as; x,y) = \frac{1+a^{1/2}- (a-1)^{1/2}}{2 a^{1/4}}.
	\end{eqnarray}
\end{theorem}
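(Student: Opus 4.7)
The plan is to follow the three-step strategy of Section~\ref{sec:warm-up} used throughout the paper: establish space-time stationarity of $\ou$, invoke the covariance-to-variance reduction of Lemma~\ref{lem:stat-CVTV}, and identify the leading-order asymptotics of the one-point variance $\Vvv\,\ou(t,x)$. Since $\ou$ is Gaussian, the computations reduce to a single explicit Gaussian integral.

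\emph{Stationarity.} The mild solution decomposes as $\ou=V+W$, with
\begin{align*}
V(t,x)=\int_{\re}p_t(x-y)\,\cB(y)\,dy,\qquad W(t,x)=\int_0^t\!\!\int_{\re}p_{t-s}(x-y)\,\wh(ds,dy),
\end{align*}
$V$ and $W$ independent. Since $|\cB(y)|$ grows at most polynomially in $|y|$ almost surely, the Gaussian decay of $p_t$ guarantees that $V$ is well defined. Translation invariance of the covariance of $\cB$'s increments combined with space-time translation invariance of $\wh$ implies that the Gaussian process $\ou$ has space-time increments whose joint law is shift-invariant, hence $\ou$ is space-time stationary and Lemma~\ref{lem:stat-CVTV} supplies the covariance-to-variance reduction.

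\emph{Variance.} By independence, $\Vvv\,\ou(t,x)=\Vvv\,V(t,x)+\Vvv\,W(t,x)$. A direct computation gives $\Vvv\,W(t,x)=\int_0^t p_{2(t-s)}(0)\,ds=\sqrt{t/\pi}$. For $V$, writing $\cB$ as the antiderivative of a spatial white noise $\xi$ and exchanging integrals represents $V(t,x)=\int_{\re}\xi(z)K_{t,x}(z)\,dz$ with a kernel $K_{t,x}$ expressible in terms of the standard normal tail $\bar{\Phi}$. Evaluating $\Vvv\,V(t,x)=\|K_{t,x}\|_{L^2(\re)}^2$ via the change of variables $z=\sqrt{t}\,u$ reduces, in the limit $t\to\infty$ with $x$ fixed, to
\begin{align*}
\Vvv\,V(t,x)=2\sqrt{t}\int_0^\infty\bar{\Phi}(u)^2\,du\,(1+o(1))=(\sqrt{2}-1)\sqrt{t/\pi}\,(1+o(1)),
\end{align*}
using the classical identity $\int_0^\infty\bar{\Phi}(u)^2\,du=(\sqrt{2}-1)/(2\sqrt{\pi})$; dominated convergence makes the $o(1)$ uniform for $x$ in compacts. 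Combining, $\Vvv\,\ou(t,x)=\sqrt{2t/\pi}\,(1+o(1))$ as $t\to\infty$ uniformly for $x$ in compact sets.

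\emph{Conclusion and main difficulty.} Substituting into the reduction
\begin{align*}
\cR^{\text{EW}}(s,as;x_1,x_2)=\frac{\Vvv\,\ou(s,x_1)+\Vvv\,\ou(as,x_2)-\Vvv\,\ou((a-1)s,x_2-x_1)}{2\sqrt{\Vvv\,\ou(s,x_1)\,\Vvv\,\ou(as,x_2)}}
\end{align*}
and using $\Vvv\,\ou(t,x)\sim\sqrt{2t/\pi}$ as $t\to\infty$ (when $a=1$ the last summand in the numerator stays $O(1)$ and is negligible against the other two), the right-hand side converges to $(1+a^{1/2}-(a-1)^{1/2})/(2a^{1/4})$. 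The only computation requiring genuine care is the leading-order evaluation of $\Vvv\,V(t,x)$: the two-sided BM covariance must be split across the half-lines $\{y\geq 0\}$ and $\{y\leq 0\}$ before a single Gaussian integration by parts closes the calculation. Everything else is a direct restatement of the warm-up.
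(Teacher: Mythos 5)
Your overall strategy (space-time stationarity $\Rightarrow$ covariance-to-variance reduction via Lemma~\ref{lem:stat-CVTV} $\Rightarrow$ asymptotics of the one-point variance) is exactly the paper's, and your variance computation is correct: indeed $\Vvv\,\ou(t,x)\sim\sqrt{2t/\pi}$, consistent with Proposition~\ref{thm:variance-EW}, which gives $\Vvv\,\ou(t,x)=|x|+\int_0^t p_s(x)\,ds$. The route you take to the variance is different from the paper's: you evaluate the Gaussian integrals explicitly to extract the constant $\sqrt{2/\pi}$, whereas the paper's sketch avoids computing any constant by invoking the $4:2:1$ scaling identity $\ou(st,\sqrt{s}\,x)\overset{\mathrm{law}}{=}\sqrt{s}\,\ou(t,x)$, which gives the exact relation $\cR^{\text{EW}}(s,as;0,0)=\rhoEW(a)$ at every finite $s$, and then handles nonzero $x,y$ by noting that $\Vvv\,(\ou(t,x)-\ou(t,0))=|x|$ is $O(1)$, negligible against $\sqrt{t}$. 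Your explicit computation is slightly more work but yields more (an explicit prefactor); the paper's scaling route is cheaper and does not need the constant.

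However, your stationarity step contains a genuine gap. You claim that translation invariance of the increment covariance of $\cB$ together with translation invariance of $\wh$ ``implies'' that $\ou$ has shift-invariant increments. This is not a valid inference. Neither summand in $\ou=V+W$ has time-stationary increments on its own: for instance if $\cB\equiv 0$ (so its increment covariance is trivially translation invariant), then $\ou=W$ and a direct computation shows $\Vvv\,(W(t+s,x)-W(s,x))$ depends on $s$ and not only on $t$. What makes the increments of $\ou$ stationary is precisely that the two-sided Brownian motion profile is invariant under the EW dynamics, and this is the content of Proposition~\ref{thm:EW-stationarity}, proved in the paper via the flow property of the mild solution together with the fact that $z\mapsto\ou(s,y+z)-\ou(s,y)$ is itself a two-sided Brownian motion independent of the forward noise. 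You should either cite that proposition, reproduce the flow-property argument, or (since $\ou$ is Gaussian) verify shift-invariance of the increment covariance by a direct but non-trivial computation. As written, the inference ``translation invariance of the inputs $\Rightarrow$ space-time stationarity of $\ou$'' would yield a false conclusion for non-Brownian (or deterministic) initial data, so it cannot stand on its own.
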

Once again, the proof is based on the following.
\begin{lemma}\label{thm:CVTV-EW}
The mild solution $\ou$ to the EW equation follows the covariance-to-variance reduction.
\end{lemma}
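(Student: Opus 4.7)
The plan is to deduce Lemma~\ref{thm:CVTV-EW} from Lemma~\ref{lem:stat-CVTV}, so the task reduces to establishing that the field $\ou$ is space-time stationary. Since $\ou$ is a centered Gaussian process (being a linear functional of the two-sided Brownian initial datum and of the space-time white noise), space-time stationarity of its increments is equivalent to showing that
\begin{align*}
\Vvv\,\bigl(\ou(t_1,x_1)-\ou(t_2,x_2)\bigr)
=\Vvv\,\bigl(\ou(t_1-t_2,\,x_1-x_2)-\ou(0,0)\bigr)
\end{align*}
for all $0\le t_2\le t_1$ and $x_1,x_2\in\re$.

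The key auxiliary input is the invariance of the two-sided Brownian motion under the Edwards--Wilkinson flow: for every $t_0\ge 0$, the spatial profile $x\mapsto\ou(t_0,x)-\ou(t_0,0)$ is distributed as $\cB$. Starting from the explicit mild-solution formula
\begin{align*}
\ou(t,x)=\int p_t(x-y)\,\cB(y)\,\dd y + \int_0^t\!\!\int p_{t-u}(x-y)\,\wh(\dd u,\dd y),
\end{align*}
I would establish this by a direct Gaussian covariance computation that splits into the noise term (handled via the semigroup identity $\int p_a(u-v)p_b(-v)\,\dd v=p_{a+b}(u)$) and the initial-condition term (handled by integration by parts against the spatial white noise, using that the kernel $p_{t_0}(x_1-\cdot)-p_{t_0}(x_2-\cdot)$ integrates to zero, which makes the corresponding Wiener integral well defined). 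The two contributions combine to reproduce exactly the Brownian covariance $\tfrac12(|x_1|+|x_2|-|x_1-x_2|)$, independently of $t_0$.

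With this invariance in hand, I would close the argument using the Markov property of the SPDE together with the translation invariance of $\wh$. Conditionally on $\ou(t_0,\cdot)$, the shifted noise $\wh(t_0+\cdot,\cdot)$ is independent of $\sigma(\ou(s,\cdot):s\le t_0)$ and has the same law as $\wh$; hence, for any fixed $(t_0,x_0)$, the shifted field
\begin{align*}
\tilde\ou(t,x)\;:=\;\ou(t_0+t,\,x_0+x)-\ou(t_0,x_0)
\end{align*}
solves the EW equation in the mild sense, driven by a noise equal in law to $\wh$, with initial profile $\tilde\ou(0,\cdot)$ distributed as $\cB$. By uniqueness of the mild solution, $\tilde\ou$ has the same law as $\ou$. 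Specializing to $(t_0,x_0)=(t_2,x_2)$ and evaluating at $(t_1-t_2,\,x_1-x_2)$ yields the required identity and hence Lemma~\ref{thm:CVTV-EW}.

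I expect the main obstacle to be the Gaussian covariance computation underlying the Brownian invariance at a fixed time: the integral against the two-sided Brownian motion must be handled carefully (for instance via integration by parts, exploiting the vanishing mean of the difference of heat kernels), and the delicate cancellation that produces precisely the $|x_1-x_2|$ term is what makes the identity nontrivial. Once that piece is in place, the SPDE Markov property and translation invariance of $\wh$ do the rest by standard arguments.
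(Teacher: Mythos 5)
Your proposal is correct and follows essentially the same route as the paper: both reduce the lemma to space-time stationarity via Lemma~\ref{lem:stat-CVTV} and establish that stationarity through the flow (Markov) property of the mild solution, the translation invariance of the white noise, and the fact that $x\mapsto\ou(t_0,x)-\ou(t_0,0)$ is a two-sided Brownian motion at each fixed time $t_0$. The only point of divergence is that you propose to verify this Brownian invariance by a direct Gaussian covariance computation (splitting into the noise and initial-condition contributions and using the semigroup identity plus integration by parts), whereas the paper takes it as known from the invariance of product measures for discrete gradient models.
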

Note that, by Brownian scaling, we have the $4:2:1$ identity $\ou(st,\sqrt{s}x)\overset{\mathrm{law}}{=}\sqrt{s}\ou(t,x)$ for all $s>0$. Together with the above covariance-to-variance reduction, we obtain the scaling relation
\begin{eqnarray}\label{eq:add-SHE-general-endpoints}
		\cR^{\text{EW}}(sa,sb;x\sqrt{s},y\sqrt{s}) = \cR^{\text{EW}}(a,b;x,y),
\end{eqnarray}
for all $s>0$, see a similar relation for $\cR^{\text{KPZ}}$ in \eqref{eq:remark3.2}. In particular,
\begin{eqnarray}\label{eq:add-SHE-exact-corr}
	\cR^{\text{EW}}(s,as; 0,0) = \frac{1+a^{1/2}- (a-1)^{1/2}}{2 a^{1/4}},
\end{eqnarray}
   for all $a\ge 1$.
The correlation $\cR^{{\text EW}}$ can, in fact, be made rather explicit even in the general case. In the next proposition, we obtain an explicit formula for the variance of the Edwards-Wilkinson model which, together with the covariance-to-variance reduction, yields an expression for $\cR^{{\text EW}}$. This is stated and proved in Proposition \ref{thm:variance-EW} is a slightly more general setting.
\begin{proposition}\label{thm:EW-exact-variance}
	Let $\sigma^2(x)=E_x[|B(1)|]$ where, under $P_x$, $B$ is a one-dimensional standard Brownian motion with $B(0)=x$. Then,
	\begin{eqnarray*}
		\Vvv \, \ou(1,x) = \sigma^2(x).
	\end{eqnarray*}
	As a consequence,
	\begin{eqnarray*}
		\cR^{{\text EW}}(a,b;x,y)
		&=&
		\frac{
			a^{1/2}\sigma^2(ya^{-1/2}) + b^{1/2}\sigma^2(yb^{-1/2}) - (b-a)^{1/2}\sigma^2((y-x)(b-a)^{-1/2})
		}
		{
			2(ab)^{1/4}
			\sqrt{ \sigma^2(xa^{-1/2}) \sigma^2(yb^{-1/2})}
		}
	\end{eqnarray*}
\end{proposition}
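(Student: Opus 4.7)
The plan for this proposition is in two steps: first prove the one-point identity $\Vvv\ou(1,x)=\sigma^2(x)$, then upgrade to the covariance formula via Brownian scaling and the covariance-to-variance reduction of Lemma~\ref{thm:CVTV-EW}.

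For the one-point variance, I would start from the mild-solution decomposition $\ou(1,x)=I(x)+J(x)$, where
\begin{equation*}
I(x)=\int_{\re} p_1(x-y)\cB(y)\,\dd y,
\qquad
J(x)=\int_0^1\!\!\int_{\re} p_{1-s}(x-y)\,\wh(\dd s,\dd y),
\end{equation*}
with $p_t$ the heat kernel. Since $\cB$ and $\wh$ are independent, $\Vvv\ou(1,x)=\Vvv I(x)+\Vvv J(x)$. The white-noise part is immediate from the Ito isometry: $\Vvv J(x)=\int_0^1(4\pi(1-s))^{-1/2}\,\dd s=1/\sqrt{\pi}$, independent of $x$. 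For the Brownian-initial-data part, I would split $I(x)$ at $y=0$ and apply stochastic Fubini using $\cB(y)=\int_0^y \dd\cB$ for $y>0$ and $\cB(y)=-\int_y^0 \dd\cB$ for $y<0$, turning $I(x)$ into a Wiener integral
\begin{equation*}
I(x)=\int_0^\infty \Phi(x-s)\,\dd\cB(s)-\int_{-\infty}^0 \bigl(1-\Phi(x-s)\bigr)\,\dd\cB(s),
\end{equation*}
with $\Phi$ the standard Gaussian CDF (the identifications $\int_s^\infty p_1(x-y)\,\dd y=\Phi(x-s)$ are obtained by the change of variable $u=x-y$). A second change of variable together with $1-\Phi=\Phi(-\,\cdot)$ then yields
\begin{equation*}
\Vvv I(x)=\int_0^\infty \Phi(x-s)^2\,\dd s+\int_0^\infty \Phi(-x-s)^2\,\dd s.
\end{equation*}

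To identify $\Vvv I(x)+\Vvv J(x)$ with $\sigma^2(x)=\bE|x+N|$ for $N\sim\mathcal{N}(0,1)$, I would first note the elementary identity $\bE|x+N|=\int_{-\infty}^x\Phi(u)\,\dd u+\int_{-\infty}^{-x}\Phi(u)\,\dd u$, obtained from $\bE(x+N)_\pm=\int_{-\infty}^{\pm x}\Phi(u)\,\dd u$. After substituting $u=x-s$ and $u=-x-s$ in the formula above, the desired equality reduces to the single closed identity $\int_{\re}\Phi(u)(1-\Phi(u))\,\dd u=1/\sqrt{\pi}$. I would prove this by writing $\Phi(u)(1-\Phi(u))=\bP(N_1\le u<N_2)$ for independent standard Gaussians $N_1,N_2$, so that Fubini gives $\int_\re \Phi(u)(1-\Phi(u))\,\dd u=\bE(N_2-N_1)_+=\tfrac12\bE|N_1-N_2|=1/\sqrt{\pi}$.

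For the covariance formula, Brownian scaling applied to the EW equation with two-sided Brownian initial data promotes the one-point identity to $\Vvv\ou(t,z)=t^{1/2}\sigma^2(zt^{-1/2})$ for all $t>0$ and $z\in\re$. Plugging the three variances $\Vvv\ou(a,x)$, $\Vvv\ou(b,y)$ and $\Vvv\ou(b-a,y-x)$ into the covariance-to-variance reduction of Lemma~\ref{thm:CVTV-EW} then yields the stated closed form for $\cR^{{\text EW}}(a,b;x,y)$. The main technical point is the careful handling of the stochastic Fubini against the two-sided Brownian motion with the correct sign conventions on $\{y<0\}$; once this bookkeeping is done, the rest is a routine Gaussian computation.
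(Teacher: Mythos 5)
Your argument is correct but follows a genuinely different route from the paper. The paper's proof of Proposition~\ref{thm:variance-EW} avoids computing any Gaussian integrals directly: after writing the variance of $\ou(t,x)$ as a double integral against the heat kernel plus a white-noise term, it differentiates in $t$, integrates the resulting expression by parts (using $\partial_t f_t=\tfrac12 f_t''$) and shows that $\partial_t\,\Vvv\,\ou(t,x)=p_t(x)$; it then separately establishes $\partial_t\,E_x[|B(t)|]=p_t(x)$ by the same device, so the two quantities agree for all $t\ge 0$ since both equal $|x|$ at $t=0$. This simultaneously yields the intermediate closed form $\Vvv\,\ou(t,x)=|x|+\int_0^t p_s(x)\,\dd s$, which is then reused verbatim in the discrete Ginzburg--Landau computation (Proposition~\ref{thm:variance-GL}). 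You instead specialize to $t=1$, represent $\ou(1,x)$ as a genuine Wiener integral via stochastic Fubini, and reduce everything to the single Gaussian identity $\int_{\re}\Phi(u)\bigl(1-\Phi(u)\bigr)\,\dd u=1/\sqrt{\pi}$, proved neatly by the $\bP(N_1\le u<N_2)$ trick; scaling then recovers general $t$. Both derivations are sound; the paper's is a touch slicker, gives the $|x|+\int_0^t p_s(x)\,\dd s$ formula explicitly (which your route bypasses), and has a useful discrete analogue, while yours is more self-contained and elementary. One small caveat: the $4{:}2{:}1$ scaling underlying the final step reads $\ou(st,\sqrt{s}\,x)\overset{\mathrm{law}}{=}s^{1/4}\ou(t,x)$ (the paper's display writes $\sqrt{s}$, a typo); your $\Vvv\,\ou(t,z)=t^{1/2}\sigma^2(zt^{-1/2})$ is consistent with the correct exponent $s^{1/4}$, so no issue there.
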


\begin{rk}
	We easily obtain the following asymptotics for $\rhoEW$:
	\begin{align*}
		\rhoEW(a)
		&\approx
		\frac{1}{2 a^{1/4}},
		\quad
		a\to\infty,
		\\
		1-\rhoEW(a)
		&\approx
		\mfrac12 (a-1)^{1/2},
		\quad
		a\to 1^+.
	\end{align*}
\end{rk}


\subsection{One-dimensional Ginzburg-Landau $\nabla$ interface model}\label{sec:results-gradient}

Let $V:\re\to[0,\infty)$ be a twice differentiable, symmetric and convex function such that
\begin{align*}
	0<c_1 \leq V''(x) \leq c_2 <\infty,\quad \forall \, x\in\re,
\end{align*}
for some constants $c_1,c_2 \in \re$. We consider $\{B_j:\, j\in\ze\}$ a family of independent standard one-dimensional Brownian motions and we let $\{u_j;\, j\in\ze\}$ be the solution to the system of coupled stochastic differential equations
\begin{align*}
	\dd u_j = {\frac12}\left( V'(\nabla u_{j-1})-V'(\nabla u_j) \right)\, \dd t +  \dd B_j,
\end{align*}
where $\nabla u_j = u_{j+1}-u_j$. The existence of the dynamics on suitable weighted spaces can be found for instance in \cite{Zhu}. It is a well-known fact that the gradient process $\{ \nabla u_j:\, j\in \ze\}$ admits a family of product invariant distributions given by
\begin{align*}
	\mu_{\lambda}(d\nabla u_j)
	=
	\frac{1}{C(\lambda)} e^{\lambda \grad u_j - V(\grad u_j)} d\grad u_j,
\end{align*}
where
\begin{align*}
	C(\lambda) = \int e^{\lambda \grad u_j - V(\grad u_j)} d\grad u_j,
\end{align*}
which is finite by the convexity of $V$.

We define
\begin{align*}
	\cR_{V}^{\text{GL}}(s,t;j,k)
	=
	{\CRvv(u_j(s),u_k(t))}.
\end{align*}
\begin{theorem}\label{thm:aging-GL}
	Consider the process $\{u_j(t),\, t\geq 0,\, j\in\ze\}$ with initial distribution $u_0=0$ and $\nabla u_j(0)\sim\mu_0^{\ze}$.
	Then for all $a\ge 1$ and $x,y\in \re$, we have
	\begin{align*}
		\lim_{s\to\infty} \cR_V^{\text{GL}}(s,as; x\sqrt{s},y\sqrt{s})
		=
		\cR^{\text{EW}}(1,a;x,y).
	\end{align*}
	In particular, for all $a\ge 1$, it holds that
	\begin{align*}
		\lim_{s\to\infty} \cR_V^{\text{GL}}(s,as; 0,0)
		=
		\frac{1+a^{1/2}- (a-1)^{1/2}}{2 a^{1/4}}.
	\end{align*}
\end{theorem}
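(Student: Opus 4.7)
The plan is to follow the same template used for Theorems \ref{thm:main-KPZ-FP}, \ref{thm:main-SHE}, and \ref{thm:main-add-SHE}: first establish that the height process $\{u_j(t)\}$ is space-time stationary under the chosen initial law, then use the covariance-to-variance reduction of Lemma \ref{lem:stat-CVTV} to express the correlation as a combination of variances at a single space-time point, and finally transfer the diffusive scaling limit of these variances to the EW side.

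First, I would verify that the height process has space-time stationary increments. Since $\mu_0^{\ze}$ is the (reversible) product invariant measure for the gradient dynamics, the gradient process $\{\nabla u_j(t)\}_{j\in\ze,\,t\geq 0}$ is space-time stationary. Writing
\[
 u_j(t)-u_k(s) = \sum_{i=k}^{j-1}\nabla u_i(s) \;+\; \int_s^t \tfrac12\bigl(V'(\nabla u_{j-1}(\tau))-V'(\nabla u_j(\tau))\bigr)\,d\tau \;+\; (B_j(t)-B_j(s)),
\]
and using that the joint law of the gradient field together with the driving Brownian increments is invariant under simultaneous shifts of time and space, the distribution of $u_j(t)-u_k(s)$ depends only on $(j-k,\,t-s)$. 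Lemma \ref{lem:stat-CVTV} then yields
\begin{align*}
 \cR_V^{\textsc{GL}}(s,as;\,\lfloor x\sqrt{s}\rfloor,\lfloor y\sqrt{s}\rfloor)
 &= \frac{\Vvv\,u_{\lfloor x\sqrt{s}\rfloor}(s)+\Vvv\,u_{\lfloor y\sqrt{s}\rfloor}(as)-\Vvv\,u_{\lfloor (y-x)\sqrt{s}\rfloor}((a-1)s)}{2\sqrt{\Vvv\,u_{\lfloor x\sqrt{s}\rfloor}(s)\,\Vvv\,u_{\lfloor y\sqrt{s}\rfloor}(as)}}.
\end{align*}

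The heart of the argument is then the identification of the diffusively-rescaled variance as the EW variance: I would show that for every $t>0$ and $z\in\re$,
\[
 \lim_{s\to\infty} s^{-1}\,\Vvv\,u_{\lfloor z\sqrt{s}\rfloor}(ts) \;=\; \Vvv\,\ou(t,z) \;=\; t\,\sigma^2(z/\sqrt{t}),
\]
with $\sigma^2$ as in Proposition \ref{thm:EW-exact-variance}. Distributional convergence of the rescaled field $s^{-1/2}u_{\lfloor z\sqrt{s}\rfloor}(ts)$ to $\ou(t,z)$ in equilibrium is the classical equilibrium fluctuation theorem for one-dimensional gradient interface models (in the spirit of Chang, Zhu, and the hydrodynamic-limit literature), available under the uniform convexity assumption $c_1\le V''\le c_2$. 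To upgrade convergence in law to convergence of second moments I would establish uniform integrability of $s^{-1}u_{\lfloor z\sqrt{s}\rfloor}(ts)^2$; the natural tool is a Brascamp–Lieb / Helffer–Sjöstrand bound on the gradient covariances coming from the convexity of $V$, which together with the representation $u_j(t) = \sum_{i=0}^{j-1}\nabla u_i(0) + B_j(t) + \int_0^t \frac{1}{2}(V'(\nabla u_{j-1})-V'(\nabla u_j))\,d\tau$ yields a uniform $L^p$ bound for any $p\geq 2$.

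Once this variance asymptotics is in hand, the proof is concluded by inserting it into the covariance-to-variance expression above, dividing numerator and denominator by $s$, and recognising the resulting ratio as $\cR^{\textsc{EW}}(1,a;x,y)$ via the EW covariance-to-variance reduction (Lemma \ref{thm:CVTV-EW}); the particular case $x=y=0$ collapses to $\rhoEW(a)$ by \eqref{eq:add-SHE-exact-corr}. I expect the main obstacle to be precisely the second-moment control needed to pass from distributional convergence to convergence of variances: distributional convergence is classical, but the $L^2$ tightness must be obtained from the structure of the model, and this is where the uniform convexity bounds on $V$ play an essential role.
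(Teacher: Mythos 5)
Your overall scheme (space-time stationarity via the product invariant gradient measure, then covariance-to-variance reduction, then convergence of rescaled variances) matches the paper's, but the central step --- identifying the limit of $s^{-1}\Vvv\,u_{\lfloor z\sqrt s\rfloor}(ts)$ --- is handled by a genuinely different mechanism. You go through the general equilibrium fluctuation CLT for the interface and then try to upgrade distributional convergence to $L^2$ convergence via Brascamp--Lieb / Helffer--Sj\"ostrand uniform integrability. The paper instead proves an \emph{exact} finite-$n$ identity (Proposition~\ref{thm:variance-GL}, stated for $V(x)=x^2/2$): applying It\^o's formula to $u(t,i)^2$ and, crucially, using the covariance-to-variance reduction \emph{at unit scale} to rewrite $\Cvv(u(t,i),u(t,i\pm1))$ in terms of $f(t,\cdot):=\Vvv\,u(t,\cdot)$ and $\Vvv\,u(0,\pm1)=1$, one finds that $f$ solves the discrete heat equation $\partial_t f=\tfrac12\Delta f$ with $f(0,i)=|i|$, hence $\Vvv\,u(t,k)=E_k[|X_t|]$ for a continuous-time simple random walk $X$. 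The variance convergence is then a one-line consequence of the invariance principle applied to this closed formula, paralleling exactly the Brownian identity $\Vvv\,\ou(t,x)=E_x[|B_t|]$ of Proposition~\ref{thm:variance-EW}; no equilibrium fluctuation theorem and no uniform integrability are needed.

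The trade-off is worth noting. Your route is more robust in spirit (it applies to general uniformly convex $V$ without structural miracles), but the uniform $L^p$ bound you sketch for the drift term $\int_0^t\tfrac12\bigl(V'(\nabla u_{j-1})-V'(\nabla u_j)\bigr)\,d\tau$ is where the real work lies: this time integral of a mean-zero stationary observable has size $O(\sqrt t)$, and controlling it uniformly in the diffusive window requires a Kipnis--Varadhan-type estimate, not merely Brascamp--Lieb bounds on static gradient covariances; you should not present it as following immediately from the static representation. The paper's route sidesteps this entirely, but at the cost of a computation that genuinely closes only for $V(x)=x^2/2$ (the paper says ``the general case following along the same lines'' but for nonlinear $V'$ the covariance-to-variance reduction no longer linearizes the drift contribution), so the two approaches are in a sense complementary: the paper is slick where it closes exactly, yours is the honest general-$V$ strategy once the $L^2$-tightness of the drift term is supplied.
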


\begin{rk}
For the Landau-Ginzburg model with i.i.d.\ initial distribution a
different and less precise asymptotic has been derived in \cite[Theorem 1.1]{DD}:
\[
\mathcal R_V^{\mathrm{GL},\mathrm{i.i.d}}(s,as;0,0)\asymp\frac{(a+1)^{1/2}-(a-1)^{1/2}}{2^{1/2}a^{1/4}}<\rho^{EW}(a)
\quad\text{for all }a>1.
\]
This is due to the effect of the initial distribution which is less
correlated than the stationary distribution which is of i.i.d.\ increments.
\end{rk}

An important element in the proof of Theorem \ref{thm:aging-GL} is a representation of the variance of the model in terms of random walks, in the spirit of Proposition \ref{thm:EW-exact-variance}.

\section{Open problems}\label{sec:results-open-questions}


Our path to aging consists in obtaining a covariance-to-variance reduction and taking the limit of the variances. This second step is the more involved as it requires to obtain highly non-trivial tail bounds to insure enough uniform integrability.

\subsection{Semi-discrete polymers at fixed temperature}\label{sec:results-open-questions-polymers-fixed-beta}


Our result for directed polymers holds in the intermediate disorder regime, where the parameters of the model depend on the size of the system. The model satisfies a covariance-to-variance reduction for fixed parameters and it is known that there exists two suitable constants $c_1$ and $c_2$ such that
\begin{align*}
	\frac{\log Z^{\beta,\theta}(n,n)-c_2n}{c_2 n^{1/3}}
	\Rightarrow
	X,
\end{align*}
where $X$ follows the Baik-Rains distribution \cite{BCF}. It is therefore expected that the correlations of the model converge to $\rhoKPZ$. The missing point is the convergence of the moments. For some results in this direction, see the recent work \cite{NS1}.


\subsection{The parabolic Anderson model}\label{sec:results-open-questions-PAM}


One of the original motivations for this work was to show aging for the Parabolic Anderson Model (PAM) in a Brownian potential, a problem that was left open in \cite{DD}. The PAM is an example of a discrete stochastic heat equation and can be defined as the solution to the infinite system of coupled stochastic differential equations
\begin{align*}
	dZ_j = \frac12 \Delta Z_j \dd t+ \beta Z_j dB_j, \quad j\in\ze,
\end{align*}
where $\Delta$ is the discrete Laplacian and $(B_j)_j$ is an i.i.d.\ family of one-dimensional Brownian motions. The model can be interpreted as the partition function of a continuous-time simple symmetric random walk in an i.i.d.\ Brownian potential, i.e.\ a directed polymer model. As such, it should converge to the stochastic heat equation in the intermediate disorder regime much in the spirit of the O'Connell-Yor model. However, no stationarity is known which leaves the covariance-to-variance reduction out of reach. 


\subsection{Further stationary directed polymer models}\label{sec:results-open-questions-polymers-other}

There exist other stationary polymer and last-passage percolation models to which the techniques of this work could be applied. We refer the reader to the recent work \cite{NS2} where concentration bounds for several of these models are obtained.


\subsection{Weakly asymmetric models}\label{sec:results-open-questions-weakly-asymmetric}


We were able to show aging for the height function of TASEP. However, the asymmetric exclusion process (ASEP) is also in the KPZ universality class and possesses product Bernoulli space-time invariant measures. The covariance-to-variance reduction is hence available. The missing point to show aging is the convergence of the moments. The asymmetry can be tuned to get convergence to the Cole-Hopf solution to the KPZ equation \cite{BG} and it is then reasonable to expect a result in the spirit of Theorem \ref{thm:main-polymers} in this regime.

On the other hand, a stationary and weakly asymmetric version of the Ginzburg-Landau $\nabla$-interface models considered here is known to rescale to the KPZ equation \cite{DGP}. Once again, an analogue of Theorem \ref{thm:main-polymers} is expected in this case.


\subsection{KPZ fixed point regime for discrete models}\label{sec:results-open-questions-KPZ-fixed-point}


As noticed above, Theorem \ref{thm:main-SHE} still holds if we consider end-points $x=x_s$ and $y=y_s$ with $x_s,y_s=o(s^{2/3})$. When these end-points are of order $s^{2/3}$, we expect that
\begin{align*}
	\lim_{s\to\infty}
	\cRKPZeq(st_1,st_2; s^{2/3}x_1,s^{2/3}x_2)
	=
	\cRKPZ(t_1,t_2; x_1,x_2),
\end{align*}
as, in this regime, $\she$ should rescale to the KPZ fixed point after proper centering and normalization. The proof of this fact is currently well far behind the reach of this work. Similar considerations should hold for other models in the KPZ universality class, including TASEP, LPP and weakly asymmetric gradient models.


\subsection{Non-stationary models}

Our approach relies heavily on the stationary
structure of the models we consider. However, aging should still hold for
these models with different initial conditions.
See for example in \cite{DD} for various interacting diffusion and \cite{CGH20} for the
KPZ in the narrow edge regime.


\subsection{Higher dimensions}\label{sec:results-open-questions-higher-dimension}


Finally, we address the question of aging for higher dimensional models of directed polymer type. The parabolic Anderson model described above can be defined on any dimension. It is expected that aging still holds in dimension two, in logarithm time scale, cf.\ \cite[Theorem 1.1]{DD}. In larger dimensions, the model displays a phase transition as the parameter $\beta$ varies. For small $\beta$, the model is in a regime of weak disorder and admits an equilibrium measure which can be constructed by means of suitable limits of the solutions using standard techniques from the theory of directed polymers in random environment. Hence, no aging should occur. On the other end, large values of $\beta$ lead to a completely different strong disorder regime where some kind of aging should hold.


\section{Proofs for the KPZ fixed point}


We follow \cite{MQR}. Let $h$ be the height function of the stationary TASEP and let
\begin{align*}
	\kpzfp^{\varepsilon}(t,x)
	=
	\varepsilon^{1/2}
	\left(
		h(2\varepsilon^{-2/3}t, 2\varepsilon^{-1}x)+\varepsilon^{-3/2}t
	\right).
\end{align*}
Then, the KPZ fixed point $\kpzfp$ is defined as the limit in law
\begin{align*}
	\kpzfp(t,x)
	=
	\lim_{\varepsilon\to0}
	\kpzfp^{\varepsilon}(t,x).
\end{align*}
The convergence above is as processes in a suitable topology proved in \cite{MQR}, and particularly in the sense of the finite dimensional distributions (see Theorem 3.8 there). From the same article, we have the scaling identity $\kpzfp(st,s^{3/2}x)\overset{\mathrm{law}}{=}s^{1/3}\kpzfp(t,x)$ and stationarity: $x\mapsto \kpzfp(t,x+y)-\kpzfp(t,y)$ is a two-sided Brownian motion for all $t\geq 0$. The covariance-to-variance reduction follows from space-time stationarity by Lemma \ref{lem:stat-CVTV}.
\begin{proposition}\label{prop:st_stat_KPZ_FP}
	For all $t\geq 0$ and $x\in \re$, we have
	\begin{align*}
		\kpzfp(t+\cdot,x+\cdot)-\kpzfp(t,x)
		\overset{\mathrm{law}}{=}
		\kpzfp(\cdot,\cdot).
	\end{align*}
	In other words, the process $\kpzfp$ is space-time stationary.
\end{proposition}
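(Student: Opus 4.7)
The plan is to lift the space-time stationarity of Bernoulli-$\tfrac12$ TASEP through the scaling limit that defines $\kpzfp$. Since TASEP is Markovian and started from its invariant product Bernoulli-$\tfrac12$ measure, the occupation process $(\eta(t,j))_{t\geq 0,\,j\in\bZ}$ is stationary in time; since the dynamics commute with spatial translations and the initial law is translation invariant, it is stationary in space as well. Combining the two invariances gives joint space-time stationarity: for every $(t_0,j_0)\in[0,\infty)\times\bZ$,
\begin{align*}
	\bigl(\eta(t_0+s,\, j_0+k)\bigr)_{s\geq 0,\, k\in\bZ}
	\overset{\mathrm{law}}{=}
	\bigl(\eta(s,\, k)\bigr)_{s\geq 0,\, k\in\bZ}.
\end{align*}

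Next I would transfer this to the height function. Inspecting the piecewise definition of $h$ in Section~\ref{sec:LPP}, the shifted height
\begin{align*}
	\tilde h(s,k) := h(t_0+s,\, j_0+k) - h(t_0,\, j_0)
\end{align*}
is built from the shifted occupation process and from the number of jumps across site $j_0$ during the interval $[t_0,t_0+s]$ via exactly the same deterministic formulas that produce $h$ from $\eta$ and $N_\cdot(0)$. Consequently, space-time stationarity of $\eta$ yields $\tilde h\overset{\mathrm{law}}{=}h$ as processes in $(s,k)$.

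The key step is then a direct substitution in the definition of $\kpzfp^{\varepsilon}$. Using $\kpzfp^{\varepsilon}(0,0)=0$ and choosing the reference point $(t_0,j_0)=(2\varepsilon^{-2/3}t,\,2\varepsilon^{-1}x)$, a computation yields
\begin{align*}
	\kpzfp^{\varepsilon}(t+s,\, x+y)-\kpzfp^{\varepsilon}(t,x)
	= \varepsilon^{1/2}\,\tilde h\bigl(2\varepsilon^{-2/3}s,\, 2\varepsilon^{-1}y\bigr)+\varepsilon^{-1}s
	\overset{\mathrm{law}}{=}
	\kpzfp^{\varepsilon}(s,y),
\end{align*}
jointly over any finite collection of $(s,y)$. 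Thus $\kpzfp^{\varepsilon}$ itself satisfies the claimed space-time stationarity for every $\varepsilon>0$.

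Finally, I would pass to the limit $\varepsilon\to 0$ using the finite-dimensional convergence $\kpzfp^{\varepsilon}\to\kpzfp$ from Theorem~3.8 of \cite{MQR}, together with the continuous mapping theorem applied to the map $(z_0,z_1,\dots,z_n)\mapsto(z_1-z_0,\dots,z_n-z_0)$; this transfers the equality in law to the limit and yields the proposition. The main potential obstacle is purely bookkeeping, namely matching evaluations before and after the $\varepsilon$-dependent rescaling so that the shifted and unshifted processes are evaluated at compatible sets of points; once the finite-dimensional convergence from \cite{MQR} is invoked, this is automatic.
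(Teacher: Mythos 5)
Your approach is essentially identical to the paper's: the paper also reduces the proposition to space-time shift invariance of the TASEP height function (Lemma~\ref{thm:TASEP-stationarity}), derived from the stationarity of the occupation/flux process, and then inherits it through the scaling limit. Your version is a bit more explicit about the rescaling bookkeeping between $\kpzfp^{\varepsilon}$ and $h$ and about invoking the continuous mapping theorem to pass the equality in law through the finite-dimensional convergence, whereas the paper carries out the height-function shift computation more carefully (via the two representations of $h$ in terms of $\hat\eta$ and the fluxes $N$, rather than asserting it is "the same deterministic formulas"); both of these are matters of detail, not of approach. The only minor imprecision on your side is treating $t_0=2\varepsilon^{-2/3}t$ and $j_0=2\varepsilon^{-1}x$ as admissible TASEP time/space points without commenting on rounding, but this is a routine technicality that the paper also leaves implicit.
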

This is a corollary of the space-time invariance for $h$ itself:
\begin{lemma}\label{thm:TASEP-stationarity}
	For all $t\geq 0$ and $k\in\ze$, we have
	\begin{align*}
		h(t+\cdot,k+\cdot)-h(t,k)
		\overset{\mathrm{law}}{=}
		h(\cdot,\cdot).
	\end{align*}
\end{lemma}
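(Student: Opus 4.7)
The plan is to deduce Lemma \ref{thm:TASEP-stationarity} from the space-time stationarity of the underlying particle dynamics by expressing the height-function increment $h(t+s, k+j) - h(t, k)$ as a deterministic functional of the configuration and bond-current data in the shifted space-time strip $\{(t+s', k+j') : s' \geq 0,\, j' \in \bZ\}$. The nontrivial point is that $h(t, \cdot)$ itself is not stationary in time (the anchor $2N_t(0)$ drifts), but the space-time increments are local in the dynamics.

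First, I would verify that the joint process of configurations and bond currents is space-time stationary. Since the initial distribution is the Bernoulli$(1/2)$ product measure, which is invariant under the TASEP dynamics, and since the independent rate-one Poisson clocks driving the dynamics are themselves space-time stationary, the Markov property yields
\begin{equation*}
\big(\eta(t+s, k+j),\ N_{[t, t+s]}(k+j)\big)_{s \geq 0,\, j \in \bZ}
\overset{\mathrm{law}}{=}
\big(\eta(s, j),\ N_{[0, s]}(j)\big)_{s \geq 0,\, j \in \bZ},
\end{equation*}
where $N_{[a, b]}(\ell)$ denotes the number of jumps across the bond $(\ell, \ell+1)$ during $[a, b]$.

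Second, I would use the decomposition $h(t+s, k+j) - h(t, k) = [h(t+s, k+j) - h(t+s, k)] + [h(t+s, k) - h(t, k)]$. The first bracket is a deterministic function of $\eta(t+s, \ell)$ for $\ell$ between $k$ and $k+j$ by the defining formula for $h$, so it lies in the shifted strip. For the second bracket, each TASEP jump at a site $m$ alters $h(\cdot, m)$ by a fixed amount and leaves the other heights fixed, so by telescoping $h(t+s, k) - h(t, k)$ is a constant multiple of the local bond current $N_{[t, t+s]}(k)$. The apparent dependence on the origin current $N_\cdot(0)$ cancels via the particle-conservation identity $\sum_{\ell=1}^k(\eta(t+s, \ell) - \eta(t, \ell)) = N_{[t, t+s]}(0) - N_{[t, t+s]}(k)$. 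Hence $h(t+s, k+j) - h(t, k)$ is obtained by applying the \emph{same} deterministic functional to the shifted data as $h(s, j)$ is obtained from the unshifted data, and the identity in law follows from the first step.

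The main obstacle is technical rather than conceptual: one must carefully verify, using the paper's sign conventions for $h$, that the origin anchor $2 N_t(0)$ in the definition indeed drops out of the space-time increments to yield the purely local identity $h(t+s, k) - h(t, k) = c\, N_{[t, t+s]}(k)$ for a suitable constant $c$. Once this local formula is established, the space-time stationarity of step one transfers immediately to the height-function increments.
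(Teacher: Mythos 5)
Your proof follows essentially the same route as the paper's: express the space-time increment $h(t+s,k+j)-h(t,k)$ through an intermediate point as a spatial increment (a sum of $\hat\eta$'s along a segment of sites) plus a temporal increment (twice a bond current over $[t,t+s]$), then invoke the space-time stationarity of the joint law of the occupation variables and the bond currents. The only cosmetic difference is the choice of intermediate point -- you split at $(t+s,k)$ where the paper splits at $(t,k+j)$ -- and your explicit observation that one needs the \emph{joint} stationarity of $(\eta,N)$, not of $\hat\eta$ alone, is a welcome clarification of what the paper's closing invocation of ``the space-time invariance of $\hat\eta$'' tacitly uses.
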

\begin{proof}
	We will use two different representations of the height function. Recall that $N_t(j)$ denotes the flux of particles passing through $j$ integrated over the time interval $[0,t]$. Then, it holds that
	\begin{align*}
		h(t,j)=h(0,j)+2N_t(j).
	\end{align*}
	On the other hand, we also have 
	\begin{align*}
		h(t,j+k) = h(t,j) + \sum^{j+k-1}_{l=j} \hat\eta(t,l),
	\end{align*}
	where $\hat{\eta}(t,l):=1-2\eta(t,l)$.
	For $t=0$, the above reduces to
	\begin{align*}
		h(0,j) =  \sum^{j-1}_{l=0} \hat\eta(0,l).
	\end{align*}
	Note that the process $\hat\eta$ is space-time invariant. Now
	\begin{align*}
		h(t+s,k+j)
		&=
		h(0,k+j)+2N_{t+s}(k+j)
		\\
		&=
		h(0,k+j)+2N_{t}(k+j)+ 2\left( N_{t+s}(k+j) - N_t(k+j) \right)
		\\
		&=
		h(t,k+j) + 2N_{t,t+s}(k+j),
	\end{align*}
	where $N_{t,t+s}(k+j)$ is the flux of particles through $j+k$ integrated over the time interval $[t,t+s]$. We now use the second representation as a coupled system, that is for every $t\ge0$ and $j\in\bZ$ we have the equality of processes:
	\begin{align*}
		\big( h(t+s,k+j) - h(t,k) \big)_{s\ge0,j\in\bZ}
		&=
		\big( h(t,k+j) - h(t,k)+ 2N_{t,t+s}(k+j) \big)_{s\ge0,j\in\bZ}
		\\
		&=
		\big(\sum^{k+j-1}_{l=k} \hat\eta(t,l) + 2N_{t,t+s}(k+j))\big)_{s\ge0,j\in\bZ}
	\end{align*}
	Using the space-time invariance of $\hat\eta$ it follows that the later is equal in law to
	\begin{align*}
		\big(\sum^{j-1}_{l=0} \hat\eta(0,l) + 2N_{s}(j) \big)_{s\ge0,j\in\bZ}
		=
		\big( h(s,j) \big)_{s\ge0,j\in\bZ}. 
	\end{align*}
\end{proof}

\begin{proof}[Proof of Lemma \ref{thm:CVTV-KPZ-FP}]
	By Lemma \ref{lem:stat-CVTV}, the covariance-to-variance reduction follows from the space-time stationarity.
	It is enough to prove that $\kpzfp(t,x)$ has a finite second moment for all $t\geq 0$ and $x\in\re$. By the scaling identity, it is enough to consider $t=1$. From \cite[Theorem 1]{BFP}, for $x\in\re$, there exists a distribution $F_x$ with finite moments such that
	\begin{eqnarray}\label{eq:TASEP-convergence-moments}
		\lim_{\varepsilon\to0}
		\esp\left[
			\kpzfp^{\varepsilon}(1,x)^n
		\right]
		=
		\int y^n dF_x(y),
\end{eqnarray}
for all $n\geq 0$. By Fatou's lemma, we then have that $\esp[\kpzfp(1,x)^2]<\infty$.
\end{proof}

\begin{proof}[Proof of Theorem \ref{thm:main-KPZ-FP}]
	For $x=y=0$, the scaling identity satisfied by $\kpzfp$ immediately yields $\cRKPZ(s,as;0,0)=\rhoKPZ(a)$.
	To include general end-points, we rewrite the covariance-to-variance reduction as
	\begin{eqnarray*}
		\cRKPZ(s,as;x,y)
		=
		\frac{
			s^{-2/3} \Vvv\, \kpzfp(s,x)
			+
			s^{-2/3} \Vvv\,\kpzfp(as,y)
			-
			s^{-2/3} \Vvv\, \kpzfp((a-1)s,y-x)
		}
		{
			2 \sqrt{
				s^{-2/3} \Vvv\, \kpzfp(s,x)
				\,
				s^{-2/3} \Vvv\, \kpzfp(as,y)
			}
		}.
	\end{eqnarray*}
	Let $B(t,x)=\kpzfp(t,x)-\kpzfp(t,0)$ and recall that for each fixed $t$, $x\mapsto B(t,x)$ is a two-sided Brownian motion. Now,
	\begin{eqnarray*}
		\Vvv \,\kpzfp(s,x)
		=
		\Vvv \,\kpzfp(s,0)
		+
		\Vvv \, B(s,x)
		+
		2
		\,
		\Cvv\left( \kpzfp(s,0),\, B(s,x)\right).
	\end{eqnarray*}
	For fixed $x$, it holds that $\lim_{s\to \infty} s^{-2/3} \, \Vvv\, B(s,x) = \lim_{s\to \infty} s^{-2/3} |x| = 0$ and by Cauchy-Schwartz' inequality
	\begin{eqnarray*}
		\nonumber
		\frac{| \Cvv\left( \kpzfp(s,0),\, B(s,x)\right)|}{s^{2/3}}
		&\leq&
		\left( \frac{\Vvv\, \kpzfp(s,0)}{s^{2/3}} \right)^{1/2}
		\left( \frac{\Vvv\,B(s,x)}{s^{2/3}} \right)^{1/2}
		\\		
		&=&
		\left( \Vvv\, \kpzfp(1,0) \right)^{1/2}
		\left( \frac{\Vvv\,B(s,x)}{s^{2/3}} \right)^{1/2}
	\end{eqnarray*}
	which converges to $0$ as $s$ goes to infinity. Hence, using scaling one more time,
	\begin{align*}
		&\lim_{s\to\infty}
		\cRKPZ(s,as;x,y)
		\\
		&=
		\lim_{s\to\infty}
		\frac{
			s^{-2/3} \Vvv\, \kpzfp(s,0)
			+
			s^{-2/3} \Vvv\,\kpzfp(as,0)
			-
			s^{-2/3} \Vvv\, \kpzfp((a-1)s,0)
		}
		{
			2 \sqrt{
				s^{-2/3} \Vvv\, \kpzfp(s,0)
				\,
				s^{-2/3} \Vvv\, \kpzfp(as,0)
			}
		}
		\\
		&=
		\lim_{s\to\infty}
		\cRKPZ(s,as;0,0)
		=
		\rhoKPZ(a).
	\end{align*}
\end{proof}


\section{Proofs for the stochastic heat equation}\label{sec:proof-SHE}


By a mild solution to the stationary stochastic heat equation, we mean a progressively measurable process solving the integral equation
\begin{align*}
	\she(t,x)
	=
	\int_{\re} p(t,x-y) e^{\cB(y)} dy
	+
	\int^t_0 \int_{\re} p(t-s,x-y) \she(s,y) \wh(ds dy).
\end{align*}
We refer the reader to the early reference \cite{BG} for a proof of existence and uniqueness among other results.


\subsection{Space-time stationarity}\label{sec:proof-SHE-stationarity}


It is well known that, for each fixed $t\geq 0$, the process
\begin{align*}
	x \mapsto \log \she(t,x)
\end{align*}
has Brownian increments \cite{BG, FQ}.

By Lemma \ref{lem:stat-CVTV} the covariance-to-variance reduction follows from the following Proposition.
\begin{proposition}\label{thm:eq-in-law-SHE}
	Let $s\geq 0$ and $y\in\re$. Then, we have the identity
	\begin{align*}
		\frac{\she(\cdot,\cdot)}{\she(s,y)}
		\overset{\mathrm{law}}{=}
		\she(\cdot-s,\cdot-y).
	\end{align*}
	In other words, the process $\kpz = \log \she$ is space-time stationary.
\end{proposition}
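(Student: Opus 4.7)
The plan is to leverage three ingredients: the Markov/semigroup property of the mild solution, the known spatial Brownian-increment structure of the stationary SHE at a fixed time, and uniqueness of mild solutions. Concretely, fix $s\geq 0$, $y\in\re$, and set
\begin{align*}
    V(t,x) := \frac{\she(t+s,x+y)}{\she(s,y)},\qquad t\geq 0,\ x\in\re,
\end{align*}
together with the shifted white noise $\wh^{s,y}(A) := \wh(A + (s,y))$. I would aim to show that $V$ is a mild solution of the same SHE as $\she$, with a driving noise distributed as $\wh$ and an independent initial condition $V(0,\cdot)$ distributed as $e^{\cB(\cdot)}$, and then invoke uniqueness in law.

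First I would establish the restart (semigroup) identity: starting from the mild equation for $\she(t+s,x+y)$ and splitting the noise integral at time $s$, one obtains
\begin{align*}
    \she(t+s,x+y) = \int_{\re} p(t,x+y-z)\,\she(s,z)\,dz + \int_{s}^{t+s}\!\!\int_{\re} p(t+s-r,x+y-z)\,\she(r,z)\,\wh(dr\,dz).
\end{align*}
After the change of variables $r'=r-s,\ z'=z-y$ and division by $\she(s,y)$, this reads
\begin{align*}
    V(t,x) = \int_{\re} p(t,x-z')\,V(0,z')\,dz' + \int_{0}^{t}\!\!\int_{\re} p(t-r',x-z')\,V(r',z')\,\wh^{s,y}(dr'\,dz'),
\end{align*}
so $V$ solves the mild SHE driven by $\wh^{s,y}$ with initial datum $V(0,\cdot)$. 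Second, $\wh^{s,y}$ is again a space-time white noise by translation invariance, and it is independent of $\mathcal{F}_s$, hence independent of $V(0,\cdot)$, which is $\mathcal{F}_s$-measurable. Third, using the fact recalled above that $x\mapsto \log\she(s,x)$ has Brownian increments, the initial profile $V(0,x) = \exp\{\log\she(s,x+y)-\log\she(s,y)\}$ is distributed as $e^{\cB(\cdot)}$ with $\cB$ a two-sided Brownian motion.

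The conclusion would then follow by uniqueness in law for mild solutions of the SHE with Brownian-exponential initial data and independent white-noise forcing (as in the Bertini--Giacomin framework): $V$ and $\she$ satisfy the same equation with initial data equal in law and driving noise equal in law, so they agree in law as processes. The main delicate step is the justification of the semigroup/restart identity above at the level of mild solutions, since it requires care with stochastic integrals against $\wh$ restricted to $[s,t+s]\times\re$ and with the joint measurability in the initial profile $\she(s,\cdot)$; once this is in place, the change of variables and the independence of $\wh^{s,y}$ from $\mathcal{F}_s$ are routine, and the Brownian-increment input at time $s$ closes the argument.
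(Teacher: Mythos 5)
Your proposal is correct and follows essentially the same route as the paper: split the noise integral at time $s$ via the flow property, change variables to the translated white noise, divide by $\she(s,y)$, and identify the resulting process as a mild solution with an independent Brownian-exponential initial datum, so that uniqueness in law closes the argument. The only detail the paper spells out that you pass over quickly is the justification for pulling the factor $\she(s,y)$ inside the stochastic integral, which rests on the independence of $\she(s,y)$ from the post-$s$ noise.
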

\begin{proof}
	For $u\geq 0$ and $b\in\re$, we define a new white noise $\wh_{u,b}$ acting on $L^2([0,\infty) \times \re)$ as
	\begin{align*}
		\int^\infty_0 \int_{\re} f(r,z) \wh_{u,b}(dr dz)
		=
		\int^\infty_0 \int_{\re} f(r+u,z+b) \wh(dr dz).
	\end{align*}
	Note that $\wh_{u,b}$ is independent of the restriction of $\wh$ to the strip $[0,u]\times \re$. Using the flow property of the mild solution,
	\begin{align*}
		\she(t,x)
		&=
		\int_{\mathbb{R}} p (t - s , x - z) \she (s, z) d z
		\\
		&
		+
		 \int^t_s
  		\int_{\mathbb{R}} p (t - r, x - z) \she (r , z) \wh (dr d z)
  		\\
  		&=
		\int_{\mathbb{R}} p (t - s, x - y - z) \she (s, y+z ) d z
		\\
		&
		+
		 \int_0^{t - s}
  		\int_{\mathbb{R}} p (t - s - r, x - y - z) \she (s + r, y + z) \wh_{s, y} (d r d z).
	\end{align*}
	Now, using the independence of $\she(s,y)$ and $\wh_{s,0}$ to pull $\she(s,y)$ inside the stochastic integral,
	\begin{align*}
		\frac{\she(t,x)}{\she(s,y)}
  		&=
		\int_{\mathbb{R}} p (t - s, x - y - z) \frac{\she (s, y+z)}{\she(s,y)} d z
		\\
		&
		+
		 \int_0^{t - s}
  		\int_{\mathbb{R}} p (t - s - r, x - z) \frac{\she (s + r, y+z)}{\she(s,y)} \wh_{s, y} (d r d z).
	\end{align*}
	This shows that $\displaystyle [s,\infty) \times \re \ni (t,x) \mapsto \frac{\she(t,x)}{\she(s,y)}$ is a mild solution to the stochastic heat equation with initial condition $\displaystyle \frac{\she(s,\cdot)}{\she(s,y)}$ which is independent of the noise $\wh_{s, y}$ for all $t>s$ and, by stationarity, is distributed as the exponential of a two-sided Brownian motion shifted by $y$. This proves the claim.
\end{proof}


\subsection{Proof of Theorem~\ref{thm:main-SHE}}\label{sec:proof-SHE-main}


We prove Theorem~\ref{thm:main-SHE}. First, we quote a few results from \cite{BCFV} and \cite{CG}. Let
\begin{eqnarray*}
	h(t,x)
	&=&
	\frac{\kpz(2t,x)+\frac{t}{12}-\frac23 \log 2t}{t^{1/3}}.
\end{eqnarray*}
Recall the definition of the Baik-Rains distribution from \cite[Definition 2.16]{BCFV}. From \cite[Theorem 2.17]{BCFV}, we know that
\begin{eqnarray}\label{eq:BR-SHE}
	h(t,0) \Rightarrow X \quad \text{as} \quad t\to\infty,
\end{eqnarray}
where $X$ follows the Baik-Rains distribution.
From \cite[Corollary 1.14]{CG}, we also have convergence of the moments: for all $p>0$,
	\begin{eqnarray}\label{eq:BR-SHE-moments}
		\lim_{t\to\infty}
		\esp[h(t,0)^p]
		=
		\esp[X^p].
	\end{eqnarray}
The key to this result is a combination of two tail bounds for $h(t,0)$ ensuring that the family $\{h(t,0)^p:\, t\geq 0\}$ is uniformly integrable (see 	\cite[Theorem 1.12 and 1.13]{CG}).

We can now prove Theorem~\ref{thm:main-SHE}.
\begin{proof}[Proof of Theorem~\ref{thm:main-SHE} ]
	
	First, assume $x=y=0$. Let $\sigma^2(t)=t^{-2/3} \Vvv\, \kpz(t,0)$. We have
	\begin{eqnarray*}
		\nonumber
		\cRKPZeq(s,as; 0,0)
		&=&
		\frac{
			(as)^{2/3} \sigma^2(as)
			+
			s^{2/3} \sigma^2(s)
			-
			s^{2/3}(a-1)^{2/3}\sigma^2((a-1)s)
		}{
			2s^{2/3}a^{1/3}
			\sqrt{ \sigma^2(s)\, \sigma^2(as)}
		}.
	\end{eqnarray*}
	It is then enough to show convergence of $\sigma^2(t)$ as $t\to\infty$.
	From~\eqref{eq:BR-SHE}, we have the convergence in law
	\begin{eqnarray*}
		\frac{\kpz(t,0)+\frac{t}{24}-\frac23 \log t}{t^{1/3}}
		\Rightarrow
		2^{1/3} X,
	\end{eqnarray*}
	where $X$ follows the Baik-Rains distribution. From~\eqref{eq:BR-SHE-moments}, we obtain
	\begin{eqnarray*}
		\lim_{t\to\infty}\frac{\Vvv \,\kpz(t,0)}{t^{2/3}}=4^{1/3}\, \Vvv\, X.
	\end{eqnarray*}
	We can then take limits on the right-hand-side of the covariance-to-variance reduction in Lemma~\ref{thm:CVTV-SHE}.
	The proof To handle general end-points, we can replicate the arguments in the proof of Theorem \ref{thm:main-KPZ-FP}.
\end{proof}


\section{Proofs for TASEP and LPP}

We already have all the elements to prove Theorem \ref{thm:main-TASEP}.
\begin{proof}[Proof of Theorem \ref{thm:main-TASEP}]
	We follow the scheme of proof of Theorem \ref{thm:main-KPZ-FP}.
	The covariance-to-variance reduction for $h$ follows from Lemma \ref{thm:TASEP-stationarity}. The convergence of the variances follows from \eqref{eq:TASEP-convergence-moments}.
\end{proof}
The proof for TASEP follows from stationarity and an exact mapping to TASEP. For the former:
\begin{lemma}\label{thm:LPP-stationarity}
	For all $m,n\geq 0$, we have
	\begin{align*}
		L(\cdot,\cdot)-L(m,n)
		\overset{\mathrm{law}}{=}
		L(\cdot-m,\cdot-n).
	\end{align*}
\end{lemma}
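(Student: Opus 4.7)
The plan is to derive the identity from a Burke-type property for exponential LPP with these boundary rates, combined with the recursive structure of $L$. Fix $m, n \ge 0$ and define a new weight field $W'$ on $\{(k,l) : k, l \ge 0\}$ by $W'(0,0) = 0$ and
\begin{align*}
W'(k, 0) &= L(m+k, n) - L(m+k-1, n), \quad k \ge 1,\\
W'(0, l) &= L(m, n+l) - L(m, n+l-1), \quad l \ge 1,\\
W'(k, l) &= W(m+k, n+l), \quad k, l \ge 1.
\end{align*}
Write $L^{W'}$ for the LPP computed with these weights.

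The first step is the Burke property: one shows $W' \overset{\mathrm{law}}{=} W$. Concretely, the two sequences of boundary increments $(W'(k,0))_{k\ge 1}$ and $(W'(0,l))_{l\ge 1}$ are independent $\mathrm{Exp}(1/2)$ families, jointly independent of the bulk weights $(W(i,j))_{i>m,\,j>n}$. I would prove this by induction on $m+n$: the base $(0,0)$ is immediate; the step $(m,n)\to(m+1,n)$, and symmetrically $(m,n)\to(m,n+1)$, performs a local exchange at the new corner that uses the memoryless property of the exponential law together with the balance between the boundary rate $1/2$ on both axes and the bulk rate $1$. This is the standard ``competition interface'' style swap; the specific choice of boundary rate $1/2$ is what makes the identity close up.

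The second step is algebraic. Using the recursion $L(i,j) = W(i,j) + \max\bigl(L(i-1,j),\, L(i,j-1)\bigr)$ for $i,j\ge 1$, I would prove by induction on $k+l$ that
\begin{align*}
L(m+k, n+l) - L(m, n) = L^{W'}(k, l), \qquad k, l \ge 0.
\end{align*}
For $k=0$ or $l=0$ this telescopes directly from the definition of $W'$ on the axes. For $k,l\ge 1$, subtracting $L(m,n)$ from both sides of the recursion at $(m+k,n+l)$ and applying the inductive hypothesis at $(k-1,l)$ and $(k,l-1)$ exactly reproduces the recursion for $L^{W'}$ at $(k,l)$.

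Combining the two steps,
\begin{align*}
\bigl(L(m+k, n+l) - L(m, n)\bigr)_{k, l \ge 0} = \bigl(L^{W'}(k, l)\bigr)_{k, l \ge 0} \overset{\mathrm{law}}{=} \bigl(L(k, l)\bigr)_{k, l \ge 0},
\end{align*}
which is the lemma. The main obstacle is the Burke property itself: it is a genuinely non-trivial joint-distribution statement about the increments of $L$ along the boundary of the shifted quadrant together with the remaining bulk weights, and it is precisely the rate choice $\mathrm{Exp}(1/2)$ on the axes that makes the local exchange close. Since this is a classical fact for exponential LPP, I would cite it in the write-up rather than unfold the induction in full.
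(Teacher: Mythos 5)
Your proof is correct and matches the paper's: the same shifted weight field, the same appeal to Burke's theorem for its distribution, and the same deterministic identity identifying the shifted LPP with $L(\cdot,\cdot)-L(m,n)$. The only difference is that you verify that deterministic identity by induction via the LPP recursion $L(i,j)=W(i,j)+\max\bigl(L(i-1,j),L(i,j-1)\bigr)$, whereas the paper uses a two-sided inequality based on where the optimal path crosses the boundary of the shifted quadrant; both establish exactly the same fact, and your inductive route is arguably the cleaner one.
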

\begin{proof}
	Let $ m_1,n_1\ge 0$.
	We define new random variables $\{\overline{W}(i,j):\, i\geq n_1,\, j\geq m_1\}$ as
	\begin{align*}
		\overline{W}(i,j) &= W(i,j), \quad i>n_1\, \text{ and } \, j>m_1,
		\\
		\overline{W}(n_1,j) &= L(n_1,j)-L(n_1,j-1), \quad j>m_1,
		\\
		\overline{W}(i,m_1) &= L(j,m_1)-L(j-1,m_1), \quad i>n_1
        \\
        \overline{W}(n_1,m_1)&=0.
	\end{align*}
	It is known that $\{\overline{W}(i,j):\, i\geq n_1,\, j\geq m_1\}$ is a family of independent random variables such that
	\begin{align*}
		\overline{W}(i,j) &\sim \mathrm{Exp}(1) \text{ for } i>n_1,\, j>m_1,\\
		\overline{W}(i,j) &\sim \mathrm{Exp}(1/2) \text{ for } i=n_1,j>m_1  \text{ or } i>n_1,\, j=m_1. 
	\end{align*}
	For any $m_2\ge m_1$ and $n_2\ge n_1$ and an up-right path $S$ from $(m_1,n_1)$ to $(m_2,n_2)$, we define the passage time $\overline{T}(S)$ using the random variables $\overline{W}(i,j)$ and we let $\overline{L}(m_2,n_2)$ to be the maximum of $\overline{T}(S)$ over such paths. Then, it holds that $\big(\overline{L}(m_2,n_2)\big)_{m_2\ge m_1,n_2\ge n_1}$ has the same law as $\big(L(m_2-m_1, n_2-n_1)\big)_{m_2\ge m_1,n_2\ge n_1}$.
	Let $\mathcal{B}(m_1,n_1)=\{(i,j):\, i=m_1 \, \text{ and }\, n_1 \leq j \leq n_2 \, \text{ or } \,m_1 \leq i \leq m_2 \, \text{ and } \, j=m_1\}$.
	Let $S^*$ be the optimal path from $(0,0)$ to $(m_2,n_2)$. $S^*$ crosses $\mathcal{B}$ and exits it at a point $x^*$. Without loss of generality, assume that $x^*$ lies on the horizontal segment of $\mathcal{B}$. Let $x^*_+=x^*+(0,1)$. Then,
	\begin{align*}
		L(m_2,n_2)
		&=
		L(x^*) + L(x^*; m_2,n_2)
		\\
		&=
		L(m_1,n_1) + \left( L(x^*)-L(m_1,n_1) + L(x^*_+; m_2,n_2) \right)
		\\
		&\leq
		L(m_1,n_1) + \overline{L}(m_2,n_2).
	\end{align*}
	Now, there exists $y^{*} \in \mathcal{B}$ such that
	\begin{align*}
		\overline{L}(m_2,n_2)
		&=
		L(y^*)-L(m_1,n_1) + L(y^*_+;m_2,n_2),
	\end{align*}
	where we assumed, without loss of generality, that $y^*$ lies on the horizontal segment of $\mathcal{B}$ and we let $y^*_+=y^*+(0,1)$.
	Hence,
	\begin{align*}
		L(m_1,n_1)+\overline{L}(m_2,n_2)
		&=
		L(m_1,n_1) +L(y^*) -L(m_1,n_1)+ L(y^*_+;m_2,n_2)
		\\
		&=
		L(y^*)+L(y^*_+;m_2,n_2)
		\\
		&\leq
		L(m_2,n_2). \qedhere
	\end{align*}
To conclude, note that we in fact showed that $\big(\overline{L}(m_2,n_2)\big)_{m_2\ge m_1,n_2\ge n_1} = \big(L(m_2,n_2) -L(m_1,n_1)\big)_{m_2\ge m_1,n_2\ge n_1}$.
\end{proof}

\begin{proof}[Proof of Theorem \ref{thm:main-LPP}]
	We use the well-known identity
	\begin{eqnarray*}
		\p\left[
			L(n,n) \leq t
		\right]
		=
		\p\left[
			N_t(0) \geq n
		\right]
		=
		\p\left[
			h(t,0) \geq n/2
		\right].
	\end{eqnarray*}
	Then, the convergence of the moments of the rescaled height function yields convergence of the moments of
	\begin{align*}
		\frac{L(n,n)-c_1n}{c_2 n^{1/3}},
	\end{align*}
	where $c_1$ and $c_2$ are properly chosen constants. The proof then follows along the lines of the proof of Theorem \ref{thm:main-KPZ-FP}.
\end{proof}


\section{Proofs for the Edwards-Wilkinson equation}\label{sec:proof-SHE-additive}

Recall that a solution to the Edwards-Wilkinson equation (or stochastic heat equation with additive noise) corresponds to the mild solution to the equation
\begin{align*}	
    \partial_t \, \ou(t,x) =&\; \tfrac12 \partial_x^2 \, \ou(t,x) + \wh,
	\\
	\ou(0,x) =&\; \cB(x),
\end{align*}
The solution is explicitly given by
\begin{eqnarray}\label{eq:mild-EW}
	\ou(t,x)
	=
	\int_{\re}p(t,x-z) \cB(z)\, dz
	+
	\int^t_0 \int_{\re} p(t-r,x-z) \, dzdr,
\end{eqnarray}
where
\begin{align*}
	p(t,x) = \frac{1}{\sqrt{2\pi t}} e^{-\frac{x^2}{2t}}.
\end{align*}
As $\ou$ arises as the limit of discrete models which discrete gradients have product invariant distributions \cite{Zhu, GJS}, it follows that the two-sided Brownian motion initial condition is stationary.

We now sketch the proof of Theorem~\ref{thm:main-add-SHE}. The formula \eqref{eq:add-SHE-general-endpoints} is a consequence of the equality in law
\begin{align*}
	\ou(as,bs;x\sqrt{s},y\sqrt{s}) \overset{\mathrm{law}}{=} s^{1/4}\ou(a,b;x,y),
\end{align*}
which follows from Brownian scaling. The covariance-to-variance reduction which will again play a crucial role follows from Proposition~\ref{thm:EW-stationarity} below. Formula \eqref{eq:add-SHE-exact-corr} then follows from the covariance-to-variance reduction and the scaling relation above. Finally, the limit \eqref{eq:add-SHE-limit} follows from the arguments used at the end of the proof of Theorem~\ref{thm:main-SHE} once we note that
\begin{align*}
	\ou(t,x) = \ou(t,0) + \left( \ou(t,x) - \ou(t,0) \right),
\end{align*}
and $\Vvv \, \left( \ou(t,x) - \ou(t,0) \right) = |x|$ for all $t\geq 0$.

As noted above, to get the covariance-to-variance reduction, it is enough to show:
\begin{proposition}\label{thm:EW-stationarity}
	For all $s\geq 0$ and $y\in \re$, we have the identity
	\begin{align*}
		\ou(\cdot,\cdot)-\ou(s,x) \overset{\mathrm{law}}{=}  \ou(\cdot-s,\cdot-x).
	\end{align*}
\end{proposition}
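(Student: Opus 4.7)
The plan is to follow the strategy of the proof of Proposition~\ref{thm:eq-in-law-SHE}, replacing the multiplicative flow of the SHE with the linear flow of the heat equation. Fix $s \geq 0$ and $y \in \re$. As in the proof of Proposition~\ref{thm:eq-in-law-SHE}, introduce the shifted white noise $\wh_{s,y}$ defined by
$$\int_0^\infty \int_\re f(r,z)\,\wh_{s,y}(dr\,dz) := \int_0^\infty \int_\re f(r, z)\,\wh(s+dr,\, y+dz),$$
which is independent of the restriction of $\wh$ to $[0,s]\times \re$. By~\eqref{eq:mild-EW} and the semigroup property of the heat kernel, for every $t \geq s$ and $x \in \re$,
$$\ou(t,x) = \int_\re p(t-s,\, x-z)\,\ou(s,z)\,dz + \int_s^t \int_\re p(t-r,\, x-z)\,\wh(dr\,dz).$$

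Next, I subtract $\ou(s,y)$, use $\int_\re p(t-s, x-z)\,dz = 1$ to absorb the constant term, and perform the change of variable $z \mapsto z + y$ in both integrals to obtain
$$\ou(t,x) - \ou(s,y) = \int_\re p(t-s,\, x-y-z)\bigl(\ou(s,y+z) - \ou(s,y)\bigr)\,dz + \int_0^{t-s} \int_\re p(t-s-r,\, x-y-z)\,\wh_{s,y}(dr\,dz).$$
This identifies $(t,x) \mapsto \ou(t,x) - \ou(s,y)$, for $t \geq s$, as a mild solution of the EW equation on $[s,\infty)\times \re$ with spatial origin shifted to $y$, driven by $\wh_{s,y}$ and with initial datum $z \mapsto \ou(s,y+z) - \ou(s,y)$ at time $s$.

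The crucial remaining step is to show that this initial datum is a two-sided standard Brownian motion, independent of the driving noise $\wh_{s,y}$. Independence is automatic since $\ou(s,\cdot)$ is measurable with respect to $(\cB, \wh|_{[0,s]\times\re})$. The Brownian law of the increments follows from a direct Gaussian covariance calculation based on~\eqref{eq:mild-EW}: using the reproducing property of the heat kernel, the contribution of the deterministic part $\int p(s, \cdot -z)\,\cB(z)\,dz$ to the covariance of the increment $\ou(s,y+z)-\ou(s,y)$ produces exactly a two-sided Brownian covariance in $z$, while the stochastic convolution $\int_0^s\!\int p(s-r, \cdot -z)\,\wh(dr\,dz)$ is spatially stationary and hence contributes nothing to the variance of the spatial increment. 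Equivalently, the two-sided Brownian measure on spatial increments is preserved by the EW dynamics.

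Combining these two inputs with uniqueness in law of the mild solution to the EW equation started from a two-sided Brownian initial condition identifies the right-hand side of the displayed identity above, in law, with $\ou(t-s,\, x-y)$, which yields the claimed space-time stationarity. The main obstacle is the preservation of the Brownian law of the spatial increments at the positive time $s$; once that short Gaussian computation is in place, the rest of the argument is a direct transcription of the multiplicative-noise proof.
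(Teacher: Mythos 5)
Your proof follows the same route as the paper's: use the Duhamel representation of the mild solution started at time $s$, shift space-time to introduce the new white noise $\wh_{s,y}$, subtract $\ou(s,y)$ so that the new initial datum is the recentered process $z\mapsto\ou(s,y+z)-\ou(s,y)$, and identify that datum as a two-sided Brownian motion independent of $\wh_{s,y}$. The paper handles this last identification by citation (the Brownian measure on spatial increments is the invariant law for EW, inherited from the discrete Ginzburg--Landau models rescaling to it), whereas you propose a direct Gaussian computation. That is a perfectly reasonable alternative route, but the computation as you sketch it is wrong.

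You claim that the heat-smoothed initial datum $\int p(s,\cdot-z)\cB(z)\,dz$ by itself already has two-sided Brownian spatial increments, and that the stochastic convolution $G(x)=\int_0^s\!\int p(s-r,x-z)\wh(dr\,dz)$ contributes nothing to the variance of spatial increments because it is spatially stationary. Neither statement is correct. For $s>0$, $x\mapsto\int p(s,x-z)\cB(z)\,dz$ is a.s.\ $C^\infty$ in $x$, so its increments over a distance $h$ are $O(h)$, not $O(\sqrt h)$; its variance-of-increment is $\tfrac1\pi\int k^{-2}e^{-sk^2}\bigl(1-\cos k(x_1-x_2)\bigr)dk$, which is not $|x_1-x_2|$. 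And spatial stationarity of $G$ only says the one-point marginal of $G$ is translation invariant; it does not force $\Vvv\bigl(G(x_1)-G(x_2)\bigr)=0$. In fact
\begin{align*}
\Vvv\bigl(G(x_1)-G(x_2)\bigr)&=2\int_0^s\bigl[p(2r,0)-p(2r,x_1-x_2)\bigr]dr=\frac1\pi\int\frac{1-e^{-sk^2}}{k^2}\bigl(1-\cos k(x_1-x_2)\bigr)dk>0.
\end{align*}
What actually happens is that neither piece is Brownian separately, and the Brownian law of $\ou(s,\cdot)$ arises from the exact cancellation between the two independent contributions:
\begin{align*}
\Vvv\bigl(\ou(s,x_1)-\ou(s,x_2)\bigr)&=\frac1\pi\int\frac{e^{-sk^2}}{k^2}\bigl(1-\cos k(x_1-x_2)\bigr)dk+\frac1\pi\int\frac{1-e^{-sk^2}}{k^2}\bigl(1-\cos k(x_1-x_2)\bigr)dk\\
&=\frac1\pi\int\frac{1-\cos k(x_1-x_2)}{k^2}\,dk=|x_1-x_2|.
\end{align*}
So the qualitative explanation you give for your acknowledged ``crucial remaining step'' is incorrect and should be replaced either by the full cancellation above (a genuine Gaussian computation, and arguably a nice addition to the paper's terse ``we just note'') or by appealing to the invariance of the two-sided Brownian increment law for EW as the paper does.
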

\begin{proof}
	The proof is very similar to the multiplicative case. By the flow property of the mild solution,
	\begin{eqnarray*}
		\nonumber
		\ou(t,y)
		&=&
		\int_{\re} p(t-s,x-z) \, \ou(s,z)\, dz
		+
		\int^t_s \int_{\re} p(t-r,x-z) \wh(dr dz)
		\\
		&=&
		\int_{\re} p(t-s,x-y-z) \, \ou(s,y+z)\, dz
		+
		\int^{t-s}_0 \int_{\re} p(t-s-r,x-y-z) \wh_{s,y}(dr dz),
	\end{eqnarray*}
	where $\wh_{s,y}$ was defined in the proof of Proposition~\ref{thm:eq-in-law-SHE}. Hence,
	\begin{align*}
		\ou(t,y)-\ou(s,x)
		&=
		\int_{\re} p(t-s,x-y-z) \, \left( \ou(s,y+z)-\ou(s,y) \right) dz
		\\
		&
		+
		\int^{t-s}_0 \int_{\re} p(t-s-r,x-y-z) \wh_{s,y}(dr dz).
	\end{align*}
	To conclude, we just note that $z\mapsto \ou(s,y+z)-\ou(s,y)$ is a two-sided Brownian motion which is independent of $\wh_{s,y}$.
\end{proof}


The next result will be used in the proof of Theorem~\ref{thm:aging-GL} and might be of independent interest.
\begin{proposition}\label{thm:variance-EW}
	We have the formula
	\begin{eqnarray*}
		\Vvv \, \ou(t,x)
		=
		|x| + \int^t_0 p(s,x)\, ds
		=
		E_x[|B_t|],
	\end{eqnarray*}
	where, under $E_x$, $B$ is a Brownian motion with $B_0=x$.
\end{proposition}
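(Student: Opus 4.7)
The plan is to prove the two equalities separately: first the probabilistic identity $E_x[|B_t|] = |x| + \int_0^t p(s,x)\,ds$, and then the main content, $\Vvv\,\ou(t,x) = E_x[|B_t|]$. The first identity is a direct consequence of the Itô--Tanaka formula $|B_t| = |x| + \int_0^t \mathrm{sgn}(B_s)\,dB_s + L_t^0$ applied under $P_x$: taking expectations kills the martingale part, and $E_x[L_t^0] = \int_0^t p(s,x)\,ds$ is standard from the occupation-time formula (or by computing $E_x[\delta_0(B_s)]$ in the sense of densities). So everything reduces to the variance computation.

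For the variance, I would split the mild solution \eqref{eq:mild-EW} as $\ou(t,x) = I_1(t,x) + I_2(t,x)$ where
\begin{align*}
 I_1(t,x) = \int_{\re} p(t,x-z)\,\cB(z)\,dz, \qquad I_2(t,x) = \int_0^t\!\!\int_{\re} p(t-r,x-z)\,\wh(dr\,dz).
\end{align*}
Since $\cB$ and $\wh$ are independent, $\Vvv\,\ou(t,x) = \Vvv\,I_1(t,x) + \Vvv\,I_2(t,x)$. For $I_2$, Itô's isometry combined with the semigroup identity $\int p(s,y)^2\,dy = p(2s,0)$ gives
\begin{align*}
 \Vvv\,I_2(t,x) = \int_0^t p(2(t-r),0)\,dr = \sqrt{t/\pi}.
\end{align*}
For $I_1$, the key input is the covariance of the two-sided Brownian motion pinned at $0$, which a short case check on the four sign combinations of $(z_1,z_2)$ shows equals $\tfrac12(|z_1|+|z_2|-|z_1-z_2|)$. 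Plugging this in and using $\int p(t,x-z)\,dz=1$,
\begin{align*}
 \Vvv\,I_1(t,x) = \int_{\re} p(t,x-z)\,|z|\,dz \;-\; \tfrac12\!\int\!\!\int p(t,x-z_1)p(t,x-z_2)\,|z_1-z_2|\,dz_1\,dz_2.
\end{align*}
The first integral is $E_x[|B_t|]$ after the change of variables $z = x - y$ with $y \sim N(0,t)$. The second is $\tfrac12\,E|Z_1-Z_2|$ for independent $Z_i \sim N(0,t)$, which equals $\tfrac12 \cdot 2\sqrt{t/\pi} = \sqrt{t/\pi}$.

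Summing the two contributions, the $\sqrt{t/\pi}$ coming from the initial-data term is exactly cancelled by $\Vvv\,I_2$, leaving $\Vvv\,\ou(t,x) = E_x[|B_t|]$. Combining with the Tanaka identity from the first paragraph finishes the proof. There is no real obstacle here: the calculation is elementary once one has in hand (i) the closed form for the two-sided Brownian covariance that unifies all sign cases, and (ii) the semigroup identity $\int p(s,\cdot)^2 = p(2s,0)$. The only mildly delicate point is giving rigorous sense to $I_1$ as a Gaussian integral against $\cB$ (a stochastic Fubini/isometry argument against white noise on $\re$ suffices, the kernel $p(t,x-\cdot)$ being in $L^2$), but no convergence issue arises because all integrals appearing above are absolutely convergent.
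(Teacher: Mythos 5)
Your proof is correct, and it takes a genuinely different route from the paper's. You compute $\Vvv\,\ou(t,x)$ directly by splitting into the initial-data contribution $I_1$ and the white-noise contribution $I_2$, evaluating each in closed form, and observing that the $\sqrt{t/\pi}$ arising in $\Vvv\,I_1$ from the $-\tfrac12 E|Z_1-Z_2|$ term cancels exactly against $\Vvv\,I_2=\int_0^t p(2(t-r),0)\,dr$. The key simplifications you exploit are the unified covariance formula $\Cvv(\cB(z_1),\cB(z_2))=\tfrac12(|z_1|+|z_2|-|z_1-z_2|)$, the semigroup identity $\int p(s,y)^2\,dy=p(2s,0)$, and the Tanaka formula for the probabilistic identity $E_x[|B_t|]=|x|+\int_0^t p(s,x)\,ds$. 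The paper instead argues by differentiating in time: it writes $\Vvv\,\ou(t,x)$ as a double integral against the piecewise covariance $g(z,\bar z)$ plus the It\^o term, uses $\partial_t p_t=\tfrac12 p_t''$ together with integration by parts to show $\partial_t\Vvv\,\ou(t,x)=p_t(x)$, and separately derives the same ODE $\partial_t r(t,x)=p_t(x)$ with $r(0,x)=|x|$ for $r(t,x)=E_x[|B_t|]$. Your approach is more elementary and self-contained, and the cancellation is quite clean. The paper's time-derivative argument has the advantage of transferring almost verbatim to the discrete Ginzburg--Landau setting of Proposition~\ref{thm:variance-GL}, where the same structure (with the discrete Laplacian and the continuous-time random walk replacing the Laplacian and Brownian motion) is reused; a direct analogue of your cancellation computation would be somewhat less transparent there.
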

\begin{rk}
	We can obtain slightly more explicit expressions for $\Vvv \, \ou(t,x)$. Note that we have the scaling
	\begin{align*}
		\Vvv \, \ou(t,x)
		=
		r(t,x)
		=
		t^{1/2}r(1,xt^{-1/2}),
	\end{align*}
	where
	\begin{align*}
		r(1,y)=|y|(2\Phi(|y|)-1)+2p(1,y),
		\qquad
		\Phi(y)=\int_{-\infty}^{|y|}p(1,z)dz.
	\end{align*}
\end{rk}

\begin{proof}[Proof of Proposition~\ref{thm:variance-EW}]
	We write $p_t(x)=p(t,x)$ to lighten the notation.
	From \eqref{eq:mild-EW} and the independence of $\cB$ and $\wh$, we have
	\begin{align*}
		\Vvv\,(\ou(t,x))
		=
		\int_{\re}\int_{\re}p_t(z-x)p_t(\bar z-x)g(z,\bar z)\,dz\,d\bar z
		+
		\frac12 \int_0^{2t}p_u(0)\,du,
	\end{align*}
	where
	\begin{align*}
		g(z,\bar z)
		=
		z\wedge\bar z {\bf 1}_{z,\bar z\ge 0}
		+
		(-z)\wedge(-\bar z) {\bf 1}_{z,\bar z\le 0}.
	\end{align*}
	We claim that
	\begin{eqnarray}\label{eq:derivative-of-the-variance}
		\partial_t \, \Vvv\,(\ou(t,x))=p_t(x).
	\end{eqnarray}
	Together with $\Vvv\,(\Cal \ou(0,x))=|x|$, this proves the first identity above.
	
	For fixed $x$, we set $f_t(z)=p_{t}(x-z)$. Then,
	\begin{align*}
		\partial_t \, \Vvv\,(\Cal Z(t,x))
		=
		2\int_{\re}\int_{\re} \partial_t f_t(z)f_t(\bar z)g(z,\bar z)\,dz\,d\bar z
		+
		f_{2t}(x).
	\end{align*}
	Using that $\partial_t f_t(z)=\frac12f''_t(z)$, it holds that
	\begin{align*}
		2\int_{\re} f_t(\bar z)\int_{\re}\partial_t f_t(z)g(z,\bar z)\,dz\,d\bar z
		=
		\int_{\re} f_t(\bar z)\,\Big(\int_{\re}f''_t(z)g(z,\bar z)\,dz\Big) \, d\bar z.
	\end{align*}
	By integration by parts,
	\begin{align*}
		\int_{\re}f''_t(\bar z)g(z,\bar z)\,dz
		=
		-\int_{\re}f'_t(z)\partial_z g(z,\bar z)\,dz,
	\end{align*}
	where
	\begin{align*}
		\partial_z g(z,\bar z)=-{\bf 1}_{\bar z\le z\le0}+{\bf 1}_{0\le z\le\bar z}.
	\end{align*}
	Thus
	\begin{align*}
		-\int_{\re}f'_t(z)\partial_z g(z,\bar z)\,dz
		&=
		\int_{\bar z}^0f'_t(z)\,dz1_{\bar z\le 0}-\int_0^{\bar z}f'_t(z)\,dz1_{\bar z\ge 0}\\
		&=
		\big(f_t(0)-f_t(\bar z)\big)1_{\bar z\le 0}-\big(f_t(\bar z)-f_t(0)\big)1_{\bar z\le 0}\\
		&=
		f_t(0)-f_t(\bar z),
	\end{align*}
	and we get
	\begin{align*}
		\int_{\re} f_t(\bar z)\,d\bar z\Big(\int_{\re}f''_t(z)g(z,\bar z)\,dz\Big)
		&=
		f_t(0)\int_{\re} f_t(\bar z)\,d\bar z-\int_{\re} f^2_t(\bar z)\,d\bar z=p_t(x)-p_{2t}(0),
	\end{align*}
	where we have used that the integral of $f_t$ is equals to one and the semigroup property.
	Putting things together and recalling that $f_{2t}(x)=p_{2t}(0)$, we obtain \eqref{eq:derivative-of-the-variance}.
	
	Next we claim that
	\begin{eqnarray}\label{eq:Brownian-identity}
		r(t,x):=E_x[|B(t)|] = |x|+\int_0^t p_s(x)\,ds,
	\end{eqnarray}
	where $B(t)$ is a Brownian motion.
	Note that
	\begin{align*}
		r(t,x)
		=
		\int_{\re}p_t(z)|z+x|\,dz
		=
		\int_{\re	}p_t(x-z)|z|\,dz=-\int_\infty^0f_t(z)z\,dz+\int_0^\infty f_t(z)z\,dz,
	\end{align*}
	where, as above, $f_t(z)=p_{t}(x-z)$.Thus
	\begin{align*}
		\partial_t r(t,x)
		&=
		-\int_\infty^0\partial_t f_t(z)z\,dz+\int_0^\infty\partial_t f_t(z)z\,dz
		=
		-\frac12\int_\infty^0f''_t(z)z\,dz+\frac12\int_0^\infty f''_t(z)z\,dz\\
		&=
		\frac12\int_\infty^0f'_t(z)\,dz-\frac12\int_0^\infty f'_t(z)\,dz=f_t(0)=p_t(x)
	\end{align*}
	Together with $r(0,x)=|x|$, this proves \eqref{eq:Brownian-identity}.
\end{proof}

\section{Proofs for the Ginzburg-Landau $\nabla$ interface model}

We shall prove Theorem~\ref{thm:aging-GL}. Let $u(t,k):=u_k(t), t\ge 0, k\in\bZ$. We need to establish the covariance-to-variance reduction for $u$,
which is by Lemma \ref{lem:stat-CVTV} a consequence of the next Lemma.
\begin{lemma}
For every $s\geq 0$ and $j\in\mathbb{Z}$, we have the identity
\begin{align*}
	u(\cdot,\cdot) - u(s,j)
    \overset{\mathrm{law}}{=}
    u(\cdot-s,\cdot-j).
\end{align*}
\end{lemma}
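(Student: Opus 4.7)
\smallskip

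The plan is to exploit the standard fact that the gradient process $\{\nabla u_j(t):\,t\geq 0,\,j\in\mathbb{Z}\}$ is a Markov process with product invariant measure $\mu_0^{\mathbb{Z}}$, whose dynamics are translation invariant in space. Starting from $\nabla u(0,\cdot)\sim\mu_0^{\mathbb{Z}}$ thus makes the gradient field stationary in time, and the product structure makes it stationary in space as well. The height process $u$ is reconstructed from the gradient field together with the height at a single reference site, and this will let us transfer the (joint) space-time stationarity of the gradients to the height increments.

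Fix $s\geq 0$ and $j\in\mathbb{Z}$. Define the shifted process
\begin{align*}
	\tilde{u}(t,k) := u(t+s,\,k+j) - u(s,j), \qquad t\geq 0,\, k\in\mathbb{Z},
\end{align*}
and the shifted driving noises $\tilde{B}_k(t):=B_{k+j}(t+s)-B_{k+j}(s)$. By the increment stationarity and independence of Brownian motion, $\{\tilde{B}_k\}_{k\in\mathbb{Z}}$ is again a family of i.i.d.\ standard one-dimensional Brownian motions, independent of $\mathcal{F}_s:=\sigma(B_\ell(r):\ell\in\mathbb{Z},\,0\leq r\leq s)$. A direct computation using the SDE for $u$ shows that $\tilde{u}$ solves
\begin{align*}
	\mathrm{d}\tilde{u}(t,k) = \tfrac12\bigl(V'(\nabla\tilde{u}(t,k-1))-V'(\nabla\tilde{u}(t,k))\bigr)\,\mathrm{d}t + \mathrm{d}\tilde{B}_k(t),
\end{align*}
since $\nabla\tilde{u}(t,k)=\nabla u(t+s,k+j)$ and the subtracted constant $u(s,j)$ is killed by the gradient. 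The initial data is $\tilde{u}(0,0)=0$ and $\nabla\tilde{u}(0,\cdot)=\nabla u(s,\cdot+j)$.

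It remains to identify the law of this initial gradient configuration. Since the gradient process is Markov and $\mu_0^{\mathbb{Z}}$ is invariant under its semigroup, $\nabla u(s,\cdot)\sim\mu_0^{\mathbb{Z}}$; by the product/translation invariant structure, $\nabla u(s,\cdot+j)\sim\mu_0^{\mathbb{Z}}$ as well. Moreover, $\nabla u(s,\cdot+j)$ is $\mathcal{F}_s$-measurable, hence independent of $\{\tilde{B}_k\}_k$. Therefore $\tilde{u}$ solves the same SDE system as $u$, driven by an i.i.d.\ family of Brownian motions independent of an initial condition with the same distribution. Pathwise uniqueness for the Ginzburg-Landau dynamics on the appropriate weighted space (as in \cite{Zhu}) then yields $\tilde{u}(\cdot,\cdot)\stackrel{\mathrm{law}}{=}u(\cdot,\cdot)$, which is exactly the claimed identity.

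The main obstacle is not really the shift manipulation, which is essentially bookkeeping, but rather ensuring that the existence/uniqueness framework for the infinite system is strong enough to conclude equality in law of two solutions from equality in law of their initial conditions and driving noises; this is where we invoke the well-posedness result from \cite{Zhu}. The invariance of $\mu_0^{\mathbb{Z}}$ under the gradient dynamics is classical for strictly convex $V$ with bounded second derivative, and together with the translation invariance of the SDE it is the only nontrivial input needed.
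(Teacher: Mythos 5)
Your proof takes essentially the same route as the paper's: integrate the SDE from time $s$, form the space-time shift $u(t+s,k+j)-u(s,j)$, observe that the shifted driving Brownian motions are again i.i.d.\ and independent of the shifted initial gradient configuration, and use the space-time invariance of the gradient law $\mu_0^{\ze}$ to conclude. You make explicit the appeal to pathwise uniqueness/well-posedness (from \cite{Zhu}) needed to pass from ``same SDE, same initial law, same noise law'' to equality in law of the solution processes, a step the paper leaves implicit when it writes ``The claim follows.''
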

\begin{proof}
	We proceed as in the previous cases:
	\begin{align*}
		u(t,k)
		&=
		u(s,k)
		+
		 \frac12 \int^t_s \left(
		 	V'(\grad u_{k-1}(r))-V'(\grad u_k(r))
		 \right) \, dr
		 +
		 B_k(t)-B_k(s)
		 \\
		 &=
		 u(s,j)
		 \\
		 &
		 \,\,+
		 u(s,k)-u(s,j)
		+ \frac12
		 \int^t_s \left(
		 	V'(\grad u_{k-1}(r))-V'(\grad u_k(r))
		 \right) \, dr
		 +
		 B_k(t)-B_k(s)
	\end{align*}
	By shifting the space and time parameters, we can see that the last line is equal in law to
	\begin{align*}
		u(0,k-j)-u(0,0)
		+
		\frac12 \int^{t-s}_0 \left(
		 	V'(\grad u_{k-j-1}(r))-V'(\grad u_{k-j}(r))
		 \right) \, dr
		 +
		 B_{k-j}(t-s)-B_{k-j}(0).
	\end{align*}
	Note that we used the time and space invariance of the law of the process of the discrete gradients $\{\grad u_j(\cdot):\, j\in\ze\}$.
	The claim follows.
\end{proof}
We now sketch the proof of Theorem~\ref{thm:aging-GL}. For simplicity, we restrict to the case $V(x)=\frac{x^2}{2}$, the general case following along the same lines. We let $\ou_n(t,x) = n^{-1/4} u(c_1tn,c_2x\sqrt{n})$. Note that
\begin{align*}
	\CRvv \left( u(tn,y\sqrt{n}),\, u(sn,x\sqrt{n})\right)
	&=
	\CRvv \left( \ou_n(t,y),\, \ou_n(s,x)\right),
\end{align*}
so that, thanks to the covariance-to-variance reduction, the first statement of Theorem~\ref{thm:aging-GL} i.e.
\begin{align*}
	\lim_{n\to\infty} \cR^{GL}_V(s,as;x\sqrt{n},y\sqrt{s}) = \cR^{EW}(1,a;x,y)
\end{align*}
follows once we have the convergence
\begin{eqnarray}\label{eq:convergence-variances-GL}
	\lim_{n\to\infty} \Vvv \, \ou_n(t,x) = \Vvv \, \ou(t,x).
\end{eqnarray}
Proposition~\ref{thm:variance-GL} is a discrete analogue of Proposition~\ref{thm:variance-EW}. The convergence \eqref{eq:convergence-variances-GL} in this case is then a consequence of the invariance principle. We defer these last details to the end of the section.
\begin{proposition}\label{thm:variance-GL}
	Consider the case $V(x)=\frac{x^2}{2}$.
	We have the formula
	\begin{align*}
		\Vvv \, u(t,k)
		&=
		E_k[|X_t|],
	\end{align*}
	where, under $E_k$, $X$ is a simple symmetric continuous-time random walk starting at $k$.
\end{proposition}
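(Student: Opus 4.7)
The plan is to exploit the fact that when $V(x)=x^2/2$ the system becomes linear and reduces to the discrete stochastic heat equation
\[
du_j(t) \;=\; \tfrac12 \Delta u_j(t)\,dt + dB_j(t),\qquad \Delta u_j := u_{j+1}-2u_j+u_{j-1},
\]
so that the solution admits an explicit Duhamel representation
\[
u_k(t) \;=\; \sum_{j\in\bZ} p_t(k-j)\,u_j(0) \;+\; \int_0^t \sum_{j\in\bZ} p_{t-s}(k-j)\,dB_j(s),
\]
where $p_t(\cdot)$ is the transition kernel of the continuous-time simple random walk $X$ with generator $\tfrac12\Delta$, i.e.\ jumping at rate $1/2$ to each neighbour. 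Because the initial profile and the driving Brownian motions are independent, $\Vvv\,u_k(t)$ splits into two pieces that I shall identify separately with random-walk expectations.

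For the stochastic integral, the It\^o isometry combined with the Chapman-Kolmogorov identity $\sum_j p_s(k-j)^2 = p_{2s}(0)$ yields a variance of $\int_0^t p_{2s}(0)\,ds$. For the initial-condition part, the stationarity of the gradients forces $u_j(0)-u_l(0)\sim\mathcal N(0,|j-l|)$ with $u_0(0)=0$, hence $\Cvv(u_j(0),u_l(0))=\tfrac12(|j|+|l|-|j-l|)$. Introducing two independent copies $X,X'$ of the walk started at $k$ (so that $P_k(X_t=j)=p_t(k-j)$), the sum rewrites as
\[
\Vvv\Big(\sum_j p_t(k-j)\,u_j(0)\Big) \;=\; \esp\bigl[\tfrac12(|X_t|+|X'_t|-|X_t-X'_t|)\bigr] \;=\; E_k|X_t| - \tfrac12\,\esp|X_t-X'_t|.
\]
The difference $X_t-X'_t$ of two independent copies of $X$ is itself a symmetric simple walk with doubled total jump rate and is therefore distributed as $X_{2t}$ started at $0$, so $\esp|X_t-X'_t|=E_0|X_{2t}|$.

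The final ingredient is a random-walk analogue of the identity $E_x|B_t|=|x|+\int_0^t p_s(x)\,ds$ proved in Proposition~\ref{thm:variance-EW}. Applying Dynkin's formula to $f(j)=|j|$ and noting that $\tfrac12\Delta f(j)=\indicator_{\{j=0\}}$, one obtains $E_m|X_t|=|m|+\int_0^t p_s(m)\,ds$ for every $m\in\bZ$. In particular $E_0|X_{2t}|=\int_0^{2t}p_s(0)\,ds=2\int_0^t p_{2s}(0)\,ds$, so the three contributions collapse to
\[
\Vvv\,u_k(t) \;=\; E_k|X_t| - \tfrac12 E_0|X_{2t}| + \int_0^t p_{2s}(0)\,ds \;=\; E_k|X_t|,
\]
as claimed.

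The step I expect to cost the most work is not any single identity but the measure-theoretic bookkeeping needed to make the Duhamel representation rigorous: convergence of the infinite sum $\sum_j p_t(k-j)\,u_j(0)$ (routine given the Gaussian tails of $p_t$ against the $\sqrt{|j|}$ growth of the two-sided random-walk initial data), the exchange of sum and stochastic integral, and possibly invoking the weighted-space existence theory already referenced in the excerpt. The \emph{conceptual} content of the proof is then the precise cancellation between the noise contribution and half of the diagonal correction $E_0|X_{2t}|$ that arises from the correlated initial data; it is this cancellation that identifies $E_k|X_t|$, in the discrete setting, as the exact counterpart of $E_x|B_t|$ from Proposition~\ref{thm:variance-EW}.
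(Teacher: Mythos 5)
Your argument is correct and takes a genuinely different route from the paper's. The paper does not compute the variance via a Duhamel/Gaussian decomposition; instead it applies It\^o's formula to $u(t,i)^2$, invokes the already-established covariance-to-variance reduction to replace the resulting covariance terms $\Cvv(u(t,i),u(t,i\pm1))$ by combinations of one-point variances, and thereby shows that $f(t,i)=\Vvv\,u(t,i)$ solves the discrete heat equation $\partial_t f=\tfrac12\Delta f$ with initial datum $f(0,i)=|i|$; the random-walk representation $E_i[|X_t|]$ is then read off as the unique solution. Your approach instead expands $u_k(t)$ via the semigroup representation, computes the noise contribution by the It\^o isometry and Chapman--Kolmogorov as $\int_0^t p_{2s}(0)\,ds$, computes the initial-data contribution from $\Cvv(u_j(0),u_l(0))=\tfrac12(|j|+|l|-|j-l|)$ by introducing two independent copies of the walk, and then achieves the exact cancellation $\tfrac12 E_0|X_{2t}|=\int_0^t p_{2s}(0)\,ds$ via the discrete Tanaka-type identity $E_m|X_t|=|m|+\int_0^t p_s(m)\,ds$ from Dynkin's formula. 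Your version is more self-contained (it never appeals to the covariance-to-variance reduction, only to stationarity of the initial gradients and the explicit Gaussian structure), at the cost of a slightly longer computation; the paper's version is shorter precisely because it leans on the covariance-to-variance machinery established earlier, and it also makes the discrete heat-equation structure --- the exact analogue of Proposition~\ref{thm:variance-EW}'s $\partial_t\Vvv\,\ou(t,x)=p_t(x)$ --- visibly explicit. The two proofs ultimately both hinge on the observation $\tfrac12\Delta|\cdot|=\indicator_{\{0\}}$, but access it from opposite directions.
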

\begin{proof}
	For the discrete additive SHE a corresponding equality holds, involving the discrete Laplacian. This can be verified in a straight forward manner using the covariance-to-variance reduction. Indeed, let
$$u(t,i)=u(0,i)+\frac12\int_0^t(u(s,i+1)+u(s,i-1)-2u(s,i))\,ds
+B_i(t),\qquad i\in\Bbb Z,$$
where $u(0,0)=0$ and $\{u(0,i+1)-u(0,i)\}_i$ are standard normal i.i.d.\ random variables.
By Ito's formula
$$du(t,i)^2=\big[u(t,i)(u(t,i+1)+u(t,i-1)-2u(s,i))+1\big]\,dt+2u(t,i)d B_i(t).$$
Let $f(t,i)=\Vvv\,(u(t,i))=E[u(t,i)^2]$, since $E[u(t,i)]=0$.
Then
\begin{align*}
\partial_tf(t,i)
&=
E[\big[u(t,i)(u(t,i+1)+u(t,i-1)-2u(t,i))+1\big]\\
&=
\Cvv\,(u(t,i),u(t,i+1))+\Cvv\,(u(t,i),u(t,i-1))
-2\Vvv\,(u(t,i))+1.
\end{align*}
In view of the covariance-to-variance reduction and $\Vvv\,(u(0,\pm1))=1$
\begin{align*}
\Cvv\,(u(t,i),u(t,i\pm1))
&=
\frac12 \Vvv\,(u(t,i))+\frac12 \Vvv\,(u(t,i\pm1))-\frac12 \Vvv\,(u(0,\mp1))\\
&=
\frac12 f(t,i)+\frac12 f(t,i\pm1)-\frac12.
\end{align*}
Thus
$$\partial_tf(t,i)=\frac12 \big[f(t,i+1)+f(t,i-1)-2f(t,i)\big]=\frac12 \Delta f(t,i),$$
where $\Delta$ is the discrete Laplacian and
$$f(0,i)=|i|.$$
Note that this is nothing but the stochastic heat equation for the continuous-time simple random walk $X(t)$:
$$f(t,i)=\sum_{j}p^{\text{RW}}_t(i-j)|j|=E_i[|X(t)|]=E_0[|X(t)+i|],$$
where $p^{\text{RW}}_t(j)=P_0(X(t)=j)$.
\end{proof}
We finish the proof of Theorem~\ref{thm:aging-GL}.
Let $f_n(t,x)=\Vvv\, \ou_n(t,x)$ with $\ou_n(t,x)=n^{-1/4}u(tn^2,\lfloor nx\rfloor)$,
$X_n(t)=n^{-1}X(tn^2)$ and $x_n=n^{-1}\lfloor nx\rfloor$.
By the invariance principle,
$$f_n(t,x)=E_{x_n}[|X_n(t)|]\to E_x[|B(t)|]=r(t,x),$$
as $n\to\infty$,
where $B(t)$ is a Brownian motion.

\section{Proofs for directed polymers}\label{sec:proof-polymers}


\subsection{The space-time stationary structure}\label{sec:proof-polymers-stationarity}

The covariance-to-variance reduction for directed polymers is a consequence of Proposition \ref{thm:space-time-stationarity-polymers} below.
At this point, it is convenient to work with $\beta=1$. The general case follows from Brownian scaling which yields the identity
\begin{align*}
	Z^{\beta,\para}(t,n)
\overset{\mathrm{law}}{=}
 \beta^{-2n} Z^{1,\beta^{-2}\theta}(\beta^2 t, n).
\end{align*}
Moreover, it is clear that the pre-factor will not affect the variances once we take the logarithm.
In the following, we abbreviate $Z^{\theta}=Z^{1,\theta}$.

We now describe the stationary structure of the model. Let $Z^{\para}(t,0):=e^{-B_0(t)+\theta t}$ and define processes $r_n(\cdot)$ and $Y_n(\cdot)$ for $n\geq 1$ as
\begin{align*}
 \log Z^{\theta}(t,n) &- \log Z^{\theta}(t,n-1) = r_n(t)\\
 \log Z^{\theta}(t,n) &- \log Z^{\theta}(s,n) = \theta(t-s)-Y_n(s,t).
\end{align*}
In particular, we have the identity
\begin{eqnarray*}
	\log Z^{\para}(t,n) = -B_0(t)+\para t + \sum^n_{k=1}r_k(t).
\end{eqnarray*}
The following Lemma summarizes the Burke's property from \cite{OY} and parts of \cite[Theorem 3.3]{SV}:
\begin{lemma}\label{thm:stationary-structure}
	The family of processes $\{r_n, Y_n:\, n\geq 1\}$ satisfies the following properties:
	\begin{enumerate}
		\item[a.-] For each fixed $t\ge 0$,  the random variables $(r_k(t))_{k\ge 1}$ are
i.i.d.\ and, for each $k\geq 1$, $e^{-r_k(t)}$ follows a Gamma$(\theta)$ distribution.

		\vspace{1ex}

		\item[b.-]  For each fixed $n$, $\{Y_n(0,t):\, t\in\re\}$ is a two-sided Brownian motion.
		
		\vspace{1ex}
		
		\item[c.-]  For each $n\geq 1$ and each $-\infty < s_1 \leq s_2 \leq \cdots \leq s_n < \infty$, the process and the random variables
				\begin{align*}
					\{Y_n(0,t):\, t\leq s_1 \}, \quad r_k(s_k),\, k=1,\cdots, n,
				\end{align*}
				are independent.
	\end{enumerate}
\end{lemma}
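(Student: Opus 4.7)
This lemma aggregates the Burke-type property for the O'Connell--Yor Brownian queue from \cite{OY} together with its two-sided stationary extension in \cite[Theorem 3.3]{SV}. My plan is to identify $\log Z^{\theta}(\cdot, n)$ as the iterated output of a ``Brownian queue'' operator and then to establish (a)--(c) inductively on $n$. Concretely, by conditioning on the last integration variable $s_{n-1}$ in \eqref{eq:pf-ptp}, one has the recursion
\begin{equation*}
Z^{\theta}(t, n) = \int_{-\infty}^{t} Z^{\theta}(s, n-1)\, e^{B_n(t) - B_n(s)} \, ds,
\end{equation*}
which, on the log scale, realises $\log Z^{\theta}(\cdot, n)$ as the result of feeding the input $\log Z^{\theta}(\cdot, n-1)$ through a single Brownian queue with service driver $B_n$. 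In this formalism $r_n(t)$ is the stationary ``queue length'' at site $n$ and $Y_n(0,\cdot)$ is the corresponding ``departure process''.

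\textbf{Base case and induction.} For $n=1$, the time change $u = t-s$ in the recursion gives
\begin{equation*}
r_1(t) = \log \int_0^{\infty} e^{\tilde B_0(u) + \tilde B_1(u) - \theta u} \, du,
\end{equation*}
with $\tilde B_0(u) = B_0(t) - B_0(t-u)$ and $\tilde B_1(u) = B_1(t) - B_1(t-u)$ independent standard Brownian motions. Dufresne's identity then yields that $e^{-r_1(t)}$ is Gamma$(\theta)$-distributed, proving (a) for $n=1$, while (b) and (c) at level one follow from the classical Burke theorem applied to a single Brownian queue. For the induction step, assume (a)--(c) at level $n-1$: then the inductive hypothesis says that the input $\log Z^{\theta}(\cdot, n-1)$ --- as summarised by $Y_{n-1}(0,\cdot)$ and the $r_k$'s with $k \leq n-1$ --- has exactly the joint distributional structure of the base case and is independent of the new driver $B_n$. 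Applying the Burke output theorem at level $n$ then produces simultaneously: a new Gamma$(\theta)$ increment $e^{-r_n(t)}$, spatially independent of the previous ones at a common time; a two-sided Brownian motion $Y_n(0,\cdot)$; and, crucially for (c), the fact that $Y_n(0,t)$ for $t \leq s_1$ is a measurable function of $B_n|_{(-\infty, s_1]}$ and of the input path $Y_{n-1}|_{(-\infty, s_1]}$, both of which are jointly independent of $r_n(s_n)$ (by the look-back part of the Burke identity at level $n$) and of $(r_k(s_k))_{k\le n-1}$ (by the inductive hypothesis applied at the times $s_1 \le \dots \le s_{n-1}$).

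\textbf{Main obstacle.} The core technical point is the Burke output identity at each induction step, namely that feeding a two-sided Brownian motion into the Brownian queue driven by an independent Brownian service produces, in addition to the stationary Gamma-type queue-length marginals, a \emph{departure} process which is itself a two-sided standard Brownian motion independent of the future queue-length profile. In the Brownian setting this is proved by a subtle time-reversal argument combined with Dufresne's identity; it is due to \cite{OY} in the one-sided case and is extended to the two-sided stationary initial data relevant here in \cite[Theorem 3.3]{SV}. Once this identity is granted, tracking the joint independence across the ordered times $s_1 \leq \cdots \leq s_n$ in (c) is a matter of bookkeeping, and (a)--(b) are essentially immediate consequences of the output theorem applied at each level.
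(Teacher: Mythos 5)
The paper does not actually supply a proof of Lemma~\ref{thm:stationary-structure}: the sentence immediately preceding it reads ``The following Lemma summarizes the Burke's property from \cite{OY} and parts of \cite[Theorem 3.3]{SV}'', and the matter is left there. Your sketch invokes exactly these two references and the queue--recursion/Burke--output framework on which they rest, so philosophically you are aligned with the paper's intent. In that sense there is no ``paper proof'' to diverge from.

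That said, your sketch contains a genuine gap in the argument for (c), and it is precisely in the step you dismiss as ``a matter of bookkeeping''. You assert that $B_n|_{(-\infty,s_1]}$ and $Y_{n-1}|_{(-\infty,s_1]}$ are ``jointly independent of $r_n(s_n)$ by the look-back part of the Burke identity at level $n$''. This is false: $r_n(s_n)$ is a measurable functional of $Y_{n-1}|_{(-\infty,s_n]}$ and $B_n|_{(-\infty,s_n]}$, so in particular it is correlated with (indeed, in the extreme case $s_1=s_n$, determined by) those restrictions. What the two-sided Burke identity of \cite[Theorem 3.3]{SV} actually gives is the joint independence of the trio $\bigl(\{Y_n(0,t):t\le s_n\},\, r_n(s_n),\, \{(Y_{n-1}(u)-Y_{n-1}(s_n),\, B_n(u)-B_n(s_n)):u\ge s_n\}\bigr)$; from this, independence of $\{Y_n(0,t):t\le s_1\}$ and $r_n(s_n)$ follows because the first is a sub-$\sigma$-algebra of the departure process up to $s_n$, not because the \emph{inputs} up to $s_1$ are independent of $r_n(s_n)$.

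More importantly, your inductive hypothesis as formulated does not close the induction. At level $n-1$ you only have independence of $\{Y_{n-1}(0,t):t\le s_1\}$ from $(r_k(s_k))_{k\le n-1}$, but $r_n(s_n)$ is a functional of $Y_{n-1}$ up to time $s_n > s_1$ (together with $B_n$), and the hypothesis is silent about the segment $Y_{n-1}|_{(s_1,s_n]}$, which is generically correlated with the lower $r_k(s_k)$'s. To make the induction close one needs the strengthened stationary-Burke statement that also includes the future input block $\{(Y_{n-1}(u)-Y_{n-1}(s_n),\,B_n(u)-B_n(s_n)):u\ge s_n\}$ in the joint independence, and one must propagate this extra block through the induction (essentially the full strength of \cite[Theorem~3.3]{SV}, as opposed to only the two-component output theorem). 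Your base case via Dufresne's identity is fine once the constants are tracked, and the overall strategy matches the literature cited by the paper, but the independence bookkeeping in (c) is the heart of the matter and is not correctly handled as written.
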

By the discussion at the opening of Section~\ref{sec:results-open-questions}, the discrete covariance-to-variance reduction in Lemma~\ref{thm:CVTV-polymers} follows from the following Proposition.
\begin{proposition}\label{thm:space-time-stationarity-polymers}
	Let $s\geq 0$ and $m\geq 0$. Then, we have the identity
	\begin{align*}
		\frac{Z^{\theta}(s+\cdot,m+\cdot)}{Z^{\theta}(s,m)}
		\overset{\mathrm{law}}{=}
		Z^{\theta}(\cdot,\cdot).
	\end{align*}
\end{proposition}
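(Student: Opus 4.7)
The plan is to mimic the proof of Proposition~\ref{thm:eq-in-law-SHE} for the SHE: I will combine a flow decomposition of the partition function with a Burke-type rewriting of the initial data, so that the ratio $Z^\theta(s+t,m+n)/Z^\theta(s,m)$ is displayed, verbatim, as a copy of $Z^\theta(t,n)$ driven by a new family of Brownian motions. By the Brownian scaling identity $Z^{\beta,\theta}(t,n)\stackrel{\mathrm{law}}{=}\beta^{-2n}Z^{1,\beta^{-2}\theta}(\beta^2 t,n)$, it suffices to treat $\beta=1$, which I abbreviate $Z^\theta$.

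Splitting the simplex $\Delta(s+t,m+n)$ at the $m$-th coordinate yields the flow identity
\begin{align*}
Z^\theta(s+t,m+n)=\int_{-\infty}^{s+t} Z^\theta(u,m)\,Z^{(m)}(u,0;s+t,n)\,du,
\end{align*}
where $Z^{(m)}(u,0;s+t,n)$ denotes the point-to-point partition function from $(u,0)$ to $(s+t,n)$ built only from the Brownian motions $B_{m+1},\dots,B_{m+n}$. Dividing by $Z^\theta(s,m)$, substituting $u=s+v$, and using the defining relation of $Y_m$, I obtain
\begin{align*}
\frac{Z^\theta(s+t,m+n)}{Z^\theta(s,m)}=\int_{-\infty}^{t} e^{-Y_m(s,s+v)+\theta v}\,Z^{(m)}(s+v,0;s+t,n)\,dv.
\end{align*}

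Next, I introduce the candidate replacement noises $\widehat B_0(v):=Y_m(s,s+v)$ and $\widetilde B_k(v):=B_{m+k}(v+s)-B_{m+k}(s)$ for $k\ge 1$. A time shift inside $Z^{(m)}(s+v,0;s+t,n)$ turns it into the point-to-point partition function from $(v,0)$ to $(t,n)$ driven by $\widetilde B_1,\dots,\widetilde B_n$, and the right-hand side of the previous display matches, term by term, the definition of $Z^\theta(t,n)$ built from the family $(\widehat B_0,\widetilde B_1,\widetilde B_2,\dots)$. Because this identification works uniformly across $(t,n)$ with the \emph{same} replacement family, it upgrades to a process-level identity the moment this family is shown to be distributed as an i.i.d.\ collection of two-sided standard Brownian motions.

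Checking the latter joint-law statement is the decisive step. Each $\widetilde B_k$ is a two-sided standard Brownian motion by time-shift invariance of $B_{m+k}$, and the family $(\widetilde B_k)_{k\ge 1}$ is mutually independent because the underlying $(B_{m+k})_{k\ge 1}$ are. That $\widehat B_0$ is a two-sided standard Brownian motion follows from Lemma~\ref{thm:stationary-structure}(b) combined with the cocycle identity $Y_m(s,s+v)=Y_m(0,s+v)-Y_m(0,s)$; this is where the nontrivial Burke input enters. Finally, $\widehat B_0$ is independent of $(\widetilde B_k)_{k\ge 1}$ since $Y_m(0,\cdot)$ is a measurable functional of $B_0,\dots,B_m$ alone, which are independent of $(B_{m+k})_{k\ge 1}$ by construction of the environment. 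The main obstacle is therefore not of a probabilistic nature but one of careful bookkeeping — consistently tracking the two-sided Brownian motions through the simultaneous space and time shifts — once the distributional identity from Lemma~\ref{thm:stationary-structure}(b) is granted.
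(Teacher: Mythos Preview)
Your proof is correct and follows essentially the same route as the paper: both use a flow decomposition of $Z^\theta(s+t,m+n)$, divide by $Z^\theta(s,m)$, shift the time variable by $s$, and invoke Lemma~\ref{thm:stationary-structure}(b) to recognize the ratio $Z^\theta(s+\cdot,m)/Z^\theta(s,m)$ as furnishing a new two-sided Brownian motion independent of the shifted environment $(B_{m+k}(s+\cdot)-B_{m+k}(s))_{k\ge1}$. Your packaging of the inner integrals into a point-to-point partition function $Z^{(m)}$ and your explicit remark that the independence of $\widehat B_0$ from $(\widetilde B_k)_{k\ge1}$ is elementary (since $Y_m$ is measurable with respect to $B_0,\dots,B_m$) are cosmetic variations on the paper's presentation, which keeps the full simplex integral and cites the lemma more broadly.
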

\begin{proof}
	The proof is similar to the case of SHE. We first note that
	\begin{align*}
		Z^{\theta}(s+t,m+n)
		&=
		\int_{-\infty < s_m < \cdots < s_{m+n-1} < s+t}
		Z^{\theta}(s_m,m)e^{\sum^n_{j=m+1} B_j(s_{j-1},s_j)} ds_m \cdots ds_{m+n-1},
	\end{align*}
	with the convention $s_{m+n}=s+t$. We let
	\begin{align*}
		e^{W(s,s+u)}
		:=
		\frac{Z^{\theta}(s+u,m)}{Z^{\theta}(s,m)}e^{-\theta u},
	\end{align*}
	and notice that, by Lemma~\ref{thm:stationary-structure}, $\tilde W(\cdot):=W(s,s+\cdot)$ is a two-sided Brownian motion which is independent of $\tilde{B}_j(\cdot) := B_{j+m}(s+\cdot)$ for all $j\ge 1$.
	Hence, by the change of variables $s_j\to s+s_j$ followed by setting $\tilde s_j = s_{j+m}$, we get the following 
	\begin{align*}
		\frac{Z^{\theta}(s+t,m+n)}{Z^{\theta}(s,m)}
		&=
		\int_{-\infty < s_m < \cdots < s_{m+n-1} < s+t}
		e^{W(s,s_m)+\theta(s_m-s)}e^{\sum^n_{j=m+1} B_j(s_{j-1},s_j)} ds_m \cdots ds_{m+n-1}
		\\
		&=
		\int_{-\infty < s_m < \cdots < s_{m+n-1} < t}
		e^{W(s,s + s_m)+\theta s_m}e^{\sum^n_{j=m+1} B_j(s+s_{j-1},s+s_j)} ds_m \cdots ds_{m+n-1}\\
		&=
		\int_{-\infty < \tilde s_0 < \cdots < \tilde s_{n-1} < t}
		e^{\tilde W(\tilde s_0)+\theta \tilde s_0} e^{\sum^{n-m}_{j=1} \tilde B_j(\tilde s_{j-1},\tilde s_j)} d \tilde s_0 \cdots d \tilde s_{n-1},
	\end{align*}
	where $s_n=\tilde{s}_{n-m}=t$, and the last equality should be understood as an equality of \emph{processes} on $\{(t,n):t\in\bR_+ n \in \bZ_+\}:$. To conclude, note that the last term has the same law of the right hand side of the required identity.
\end{proof}


\subsection{Uniform integrability and the proof of Theorem~\ref{thm:main-polymers}  }\label{sec:proof-polymers-UI}


Let $\kpz_n = \log \she_n^{\textsc{st}}$.  The proof of Theorem \ref{thm:convergence-moments-polymers} (and hence of Theorem~\ref{thm:main-polymers}) boils down to show that, for each $t\geq 0$, $x\in\re$ and each $p>0$, the family $\{\kpz_n(t,x)^p:\, n\geq 1\}$ is uniformly integrable. We can then take the limit on both sides of the discrete covariance-to-variance reduction Lemma~\ref{thm:CVTV-polymers}. This proves Theorem~\ref{thm:main-polymers}.

The proof of the uniform integrability of $\{\kpz_n(t,x)^p:\, n\geq 1\}$ will be based on the following elementary bound: for each $p>0$, there exists $C=C(p)$ such that
\begin{align*}
	|\kpz_n(t,x)|^p
	\leq
	C
	\left(
		\she^{\textsc{st}}_n(t,x) + \she^{\textsc{st}}_n(t,x)^{-1}
	\right).
\end{align*}
It is then enough to show that the expected value of $\she_n^{\textsc{st}}(t,x)^{-1}$ is uniformly bounded in $n$. This is the content of the following theorem.
\begin{theorem}\label{thm:uniform-lower-tail}
	For each fixed $t\geq 0$ and $x\in \re$, we have
	\begin{eqnarray*}
		\sup_{n\geq 1}
		\esp\left[
			\she_n^{\textsc{st}}(t,x)^{-1}
		\right]
		<\infty.
	\end{eqnarray*}
\end{theorem}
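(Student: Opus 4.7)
The plan is to deduce the uniform negative-moment bound from a uniform Gaussian-type lower tail estimate on $\kpz_n(t,x) := \log \she_n^{\textsc{st}}(t,x)$. Starting from the identity
\[
\mathbb{E}[\she_n^{\textsc{st}}(t,x)^{-1}] = \mathbb{E}[e^{-\kpz_n(t,x)}] = 1 + \int_0^\infty e^r\, \mathbb{P}(\kpz_n(t,x) \leq -r)\, dr,
\]
a tail estimate $\mathbb{P}(\kpz_n(t,x) \leq -r) \leq C e^{-c r^2}$ for $r$ above some threshold, combined with a uniform-in-$n$ upper bound on $|\mathbb{E}[\kpz_n(t,x)]|$, yields the desired $\sup_n \mathbb{E}[\she_n^{\textsc{st}}(t,x)^{-1}] < \infty$.

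Using the Brownian scaling $Z^{\beta,\theta}(s,m) = \beta^{-2m} Z^{1, \beta^{-2}\theta}(\beta^2 s, m)$, the problem reduces to studying $\tilde Z_n := Z^{1,\, \sqrt{n}+1/2}(\sqrt{n}\, t - x,\, \lfloor t n \rfloor)$; after absorbing the scaling factor into the centering, $\kpz_n$ differs from $\log \tilde Z_n$ only by an explicit deterministic constant. The expectation $\mathbb{E}[\log \tilde Z_n]$ can be accessed through the Burke representation of Lemma~\ref{thm:stationary-structure}, and should match the deterministic centering appearing in the definition of $\she_n^{\textsc{st}}$ up to $O(1)$ corrections, which controls $\mathbb{E}[\kpz_n(t,x)]$ uniformly in $n$.

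For the Gaussian lower tail, the idea is to apply a Talagrand-type concentration inequality in the Malliavin framework to $\log \tilde Z_n$, viewed as a smooth functional of the Brownian environment $\{B_j:j\geq 0\}$. A direct chain-rule computation yields, for $j\geq 1$,
\[
D_s^{(j)} \log \tilde Z_n = \rho_j^\omega(s) := P^{\omega}(s_{j-1} \leq s \leq s_j) \in [0,1],
\]
the $j$-th quenched polymer marginal, with an analogous formula for $j = 0$ involving $P^\omega(s_0 \geq s)$. The Cameron–Martin norm is then controlled by
\[
\|D \log \tilde Z_n\|^2 \leq \sum_{j\geq 0} \int \rho_j^\omega(s)\, ds = (\sqrt{n}\, t - x) - \mathbb{E}^\omega[s_0],
\]
which should be bounded with high probability by combining the stationary structure of the model with the exponential localization of the starting point $s_0$ at scale $(\sqrt{n}+1/2)^{-1}$.

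The main obstacle is to upgrade the previous estimate to a uniform-in-$n$ control strong enough to enter a Gaussian concentration inequality. A pathwise $L^\infty$ bound on the quenched mean $\mathbb{E}^\omega[s_0]$ is not automatic, since atypical environments may produce displaced endpoints; the fix is to use a Talagrand-type refinement of Gaussian concentration that tolerates $L^p$ rather than $L^\infty$ control of the Malliavin gradient, or equivalently to truncate on a high-probability event where the stationary localization holds and handle the complementary event separately. This robust form of concentration is what the authors signal when they describe the argument as using Talagrand's concentration method in the Malliavin formalism.
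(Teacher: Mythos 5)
Your high-level plan — reducing the negative moment to a Gaussian lower tail for $\log \she_n^{\textsc{st}}$ and then invoking Talagrand--Malliavin concentration — is the right instinct, and it does match the spirit of the paper's Section~\ref{sec:proof-polymers-deviation-bounds}. But two of your concrete steps diverge from what is actually needed, and one of them is a genuine gap.

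First, the object to which the concentration inequality is applied. The paper does not apply Malliavin concentration directly to $\log \she_n^{\textsc{st}}$. Instead it uses the representation
\[
\she_n^{\textsc{st}}(t,x)=\int_{-\infty}^{\sqrt{n}\,t} e^{-\beta_n B_0(\sqrt{n}y)}\,\she_n(0,y;t,x)\,dy,
\]
restricts the integral to a compact window $[-a,a]$, and then uses Jensen and Cauchy--Schwarz to reduce the bound on $\esp[\she_n^{\textsc{st}}(t,x)^{-1}]$ to uniform control of negative moments of the \emph{point-to-point} rescaled partition function $\she_n(0,y;t,x)$, locally uniformly in the initial point $y$ (Theorem~\ref{thm:uniform-lower-tail-ptp}). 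The concentration argument is run on $\log\she_n(0,y;t,x)$, not on the stationary object. Your Burke-property attempt to control $\esp[\kpz_n(t,x)]$ is bypassed entirely.

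Second, and more seriously, your bound on the Cameron--Martin norm is far too lossy to close the argument. You bound $\|D\log\tilde Z_n\|^2 \le \sum_{j}\int \rho_j^\omega(s)\,ds$ by $\rho_j^\omega(s)^2\le\rho_j^\omega(s)$; but the right-hand side is (up to the boundary $j=0$ term, which is itself delicate) just $\beta^2$ times the \emph{total time span} of the polymer, which in the intermediate disorder regime is of order $\sqrt{n}$ after Brownian rescaling to $\beta=1$. That grows with $n$ and cannot feed a uniform-in-$n$ Gaussian concentration bound, whatever refinement of the concentration inequality you invoke. The actual identity is
\[
\|D\log Z^\beta_B\|_{H_1}^2=\beta^2\sum_k\int \langle\mathbf{1}_{X_s=k}\rangle_B^2\,ds
=\beta^2\,\bigl\langle L_t(X,\tilde X)\bigr\rangle^{\otimes 2}_B,
\]
i.e.\ the squared gradient norm equals $\beta^2$ times the quenched expected \emph{overlap} of two independent polymer copies, not $\beta^2$ times the path length. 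The crux of the whole proof is precisely that this overlap is $O(\sqrt n)$ (so $\beta_n^2\langle L\rangle = O(1)$) on an event of uniformly positive probability — this is Lemma~\ref{thm:uniform-lower-bound-probability}, proved via Paley--Zygmund, Chebyshev and the Poisson overlap estimates of Lemma~\ref{thm:estimates-Poisson}. You correctly sense that some truncation is needed and mention that a robust form of concentration is required, but the specific mechanism — defining a good event $A_n(t,x,K)$ on which the overlap and the partition function are simultaneously controlled, and applying the concentration inequality to the Cameron--Martin distance $q_{A_n}$, whose gradient is automatically bounded by $1$ (Lemma~\ref{thm:deviation-bound-distance}) — is where all the actual work of the proof lies, and the proposal leaves it as a placeholder. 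As written, the gradient estimate you give does not lead anywhere.
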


We give the proof of the theorem after stating the corresponding result for the point-to-point rescaled partition function which will then be proved in Section~\ref{sec:proof-polymers-lower-tail-bounds} by means of Gaussian deviation bounds.
Note that we have the relation
\begin{align*}
	\she_n^{\textsc{st}}(t,x)
	=
	\int^{\sqrt{n} t}_{-\infty} e^{-\beta_n B_0(\sqrt{n}s)} \she_n(0,y;t,x)\, dy,
\end{align*}
with
{\small\begin{align}\label{eq:rescaled-ptp}
	\she_n(s,y;t,x)
	&
	=
	\sqrt{n}e^{-n(t-s)+\sqrt{n}(x-y)}e^{-\frac{1}{2}[\sqrt{n}(t-s)-(x-y)]}
	\,
	Z^{\beta_n}(ns-\sqrt{n}y,ns+1;nt-\sqrt{n}x,nt),
\end{align}}
where the point-to-point partition function in fixed temperature was defined in \eqref{eq:pf-ptp}.
\begin{theorem}\label{thm:uniform-lower-tail-ptp}
	For each fixed $t\geq 0$ and $a\geq 0$, there exists $c=c(t,a) \in (0,\infty)$, $C=C(t,a) \in (0,\infty)$ and $u_0=u_0(t,a)\geq 0$ such that
	\begin{align*}
		\p\left[
			\she_n(0,y;t,x) \leq C e^{-cu}
		\right]
		\leq
		2e^{-\frac{1}{2}u^2},
	\end{align*}
	for all $n\geq 1$, $|y-x|\in[-a,a]$ and $u\geq u_0$.
	
	As a consequence, for each $t\geq 0$, $p>0$ and $a\geq 0$, there exists $K=K(t,p,a)<\infty$ such that
	\begin{align*}
		\esp\left[
			\she_n(0,y;t,x)^{-p}
		\right]
		\leq K,
	\end{align*}
	for all $|y-x|\in[-a,a]$.
\end{theorem}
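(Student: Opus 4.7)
The plan is to establish the stated lower tail via Gaussian concentration applied to $\log\she_n(0,y;t,x)$, viewed as a functional of the Brownian environment $\{B_j\}_{j\ge 1}$, combined with a uniform lower bound on its mean (or median).

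First, I would verify that $\log\she_n(0,y;t,x)$ lies in the Malliavin--Sobolev space $\mathbb{D}^{1,2}$ and compute its Malliavin derivative from the integral representations \eqref{eq:pf-ptp} and \eqref{eq:rescaled-ptp}. Since $\beta_n B_j(s_{j-1},s_j)=\beta_n\int\indicator_{[s_{j-1},s_j]}(u)\,dB_j(u)$ enters the exponent linearly, differentiating $\log Z^{\beta_n}$ gives
\[
D_{u,j}\log\she_n(0,y;t,x) \;=\; \beta_n\,\mu^{\omega}(s_{j-1}<u<s_j),
\]
where $\mu^\omega$ is the quenched polymer measure on ordered time sequences. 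In particular these derivatives are nonnegative and satisfy $\sum_j D_{u,j}\log\she_n=\beta_n$ pointwise in $u$.

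Second, I would bound the $H$-norm of the gradient in a strong enough sense for Gaussian concentration. Rewriting
\[
\|D\log\she_n\|_H^{2}
\;=\;
\beta_n^{2}\int \sum_{j} \mu^{\omega}(j,u)^{2}\, du
\;=\;
\beta_n^{2}\int \mu^{\omega\otimes 2}\!\bigl(X_u = X'_u\bigr)\, du,
\]
so that the integrand is the probability that two independent replicas under the polymer measure occupy the same level at time $u$. In the intermediate-disorder regime the replicas stay essentially diffusive, giving an overlap of order $1/\sqrt{u}$ away from the endpoints; combined with $\beta_n^2=n^{-1/2}$ and the effective duration $T_n\asymp nt$, this produces an expectation bound on $\|D\log\she_n\|_H^2$ uniform in $n$ for $|y-x|\le a$. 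With extra work based on higher-order replica computations, the same ideas should control exponential moments of $\|D\log\she_n\|_H^2$. I would then feed this into a Gaussian concentration inequality of Talagrand--\"Ustunel--Nualart type on Wiener space to conclude that $\log\she_n(0,y;t,x)$ is sub-Gaussian around its median with variance proxy depending only on $t$ and $a$. A uniform lower bound on the median (or mean), which can be extracted from annealed moment estimates for $Z^{\beta_n}$ together with Jensen, then yields
\[
\bP\bigl(\she_n(0,y;t,x)\le C e^{-cu}\bigr)\;\le\;2e^{-u^{2}/2}
\quad\text{for } u\ge u_0(t,a).
\]
The negative moment bound is then immediate from the layer-cake identity $\esp[\she_n^{-p}]=p\int_0^\infty v^{p-1}\bP(\she_n\le v^{-1})\,dv$, since the Gaussian decay above forces uniform integrability.

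The main obstacle is the Malliavin derivative estimate. The a.s.\ sup-norm of $\|D\log\she_n\|_H$ grows polynomially in $n$ (of order $n^{1/4}$), so the classical Borell--Tsirelson inequality with an almost-sure Lipschitz constant is useless here. The decisive input is the replica-overlap cancellation $\sum_j\mu^\omega(j,u)^2\ll 1$ coming from the spread of the quenched measure, but upgrading this cancellation from an $L^1$ bound to the stronger integrability required by the Wiener-space concentration machinery demands a careful analysis of two-replica overlaps for the semi-discrete polymer at the intermediate-disorder scale, and this is where the bulk of the technical effort lies.
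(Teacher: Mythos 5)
Your proposal correctly identifies the shape of the problem, but the fix you propose for the central obstacle does not work, and it is precisely at that point that the paper's argument diverges from yours.

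You compute the Malliavin derivative of $\log\she_n$ and correctly recognize that $\|D\log\she_n\|_{H_1}^{2}=\beta_n^{2}\langle L_{t,x,n}(X,\tilde X)\rangle^{\otimes 2}$ is (up to $\beta_n^{2}$) the quenched two-replica overlap. You also correctly note that its a.s.\ bound is of order $\beta_n^{2}\cdot tn=\sqrt{n}$, i.e.\ $\|D\log\she_n\|_{H_1}\lesssim n^{1/4}$, so the Borell--Üstünel concentration inequality of Theorem~\ref{thm:gaussian-concentration} cannot be applied to $\log\she_n$ directly. The gap is in your proposed remedy: controlling \emph{exponential moments} of $\|D\log\she_n\|_{H_1}^{2}$ does not feed into the concentration inequality you invoke. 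Theorem~\ref{thm:gaussian-concentration} requires an $L^{\infty}$ bound $\sigma^{2}=\|DF\|^{2}_{L^{\infty}(\Omega,\p;H_1)}$; an exponential-moment bound on $\|DF\|_{H_1}^{2}$ only yields, via Poincaré-type arguments, control of the variance and lower moments, not the Gaussian tail $2e^{-u^{2}/2}$ that the theorem asserts. There is no off-the-shelf Wiener-space concentration inequality that converts an $L^{\psi_1}$ bound on the gradient into sub-Gaussian deviation around the median, so the last step of your argument as written would not close.

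The paper avoids the issue by \emph{not} applying concentration to $\log\she_n$ at all. Instead it runs Talagrand's enlargement/distance trick (Lemma~\ref{thm:deviation-bound-distance}): for a measurable set $A\subset\Omega$, the $H_1$-distance $q_A(B)=\inf\{\|h\|_{H_1}:B+h\in A\}$ is automatically a.s.\ $1$-Lipschitz in the Cameron--Martin direction, so \emph{it} enjoys the full $2e^{-u^{2}/2}$ tail around a constant $c_p=(2\log(2/p))^{1/2}$ whenever $\p[A]\geq p$, without any overlap estimate. The overlap enters in a different way: the shift estimate
\[
\log\she_{n,B}\;\geq\;\log\she_{n,\bar B}\;-\;\beta_n\,\sqrt{\langle L_{t,x,n}\rangle^{\otimes 2}_{t,x,n,\bar B}}\;\|h\|_{H_1},
\]
for $B=\bar B+h$, shows that if you can choose $A=A_n(t,x,K)$ to be the event where \emph{both} $\she_{n}\geq\frac12\esp[\she_n]$ \emph{and} the quenched overlap is $\leq K\sqrt{n}$, then the coefficient $\beta_n\sqrt{K\sqrt{n}}=\sqrt{K}$ is $O(1)$, and the shift estimate combined with $q_A$ yields a uniform lower tail. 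The probability lower bound $\p[A_n(t,x,K)]\geq\delta>0$ (Lemma~\ref{thm:uniform-lower-bound-probability}) is obtained by Paley--Zygmund (using the uniform second-moment ratio from Lemma~\ref{thm:estimates-Poisson}) together with Chebyshev for the overlap. So instead of upgrading your $L^1$ overlap bound to exponential moments, you should downgrade it to a first-moment Chebyshev bound \emph{on the good set}, and let the $1$-Lipschitz distance function do the concentration work. The derivation of the negative-moment estimate from the tail bound via the layer-cake formula, in your last sentence, is correct and the same as the paper's.
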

Note that, by translation invariance, it is enough to show the bound for $\she_n(t,x):=\she_n(0,0;t,x)$, uniformly in $x\in[-a,a]$.

This kind of bounds dates back at least to \cite[Theorem 2.1]{T-SK} in the context of the Sherrington-Kirkpatrick model (see also \cite[Theorem 2.2.7]{T-spin}). They were shown for the Hopfield model in \cite[Theorem 1.1]{T-hopfield}. In the context of directed polymers, they were obtain for Gaussian (resp. bounded) environments in \cite[Theorem 1.5]{CH} (resp. \cite[Proposition 1]{M-esbp}), and for a Brownian polymer in a Gaussian environment in \cite[Proposition 3.3]{RT}. All these results are for fixed temperature and fixed end-point. The result for directed discrete polymers in Gaussian environments in the intermediate disorder regime and locally uniformly in the end-point was obtained in \cite[Theorem 1]{M-positivity}.

Our proof is a blend of the approaches in \cite{RT} and \cite{M-positivity}.

\begin{proof}[Proof of Theorem~\ref{thm:uniform-lower-tail} assuming Theorem~\ref{thm:uniform-lower-tail-ptp}]
	We take $x=0$ to simplify the notation. The proof for a general $x\in \re$ is identical.
Recall the relation
\begin{align*}
	\she_n^{\textsc{st}}(t,0)
	=
	\int^{\sqrt{n} t}_{-\infty} W_n(y) \she_n(0,y;t,0)\, dy
	\quad
	\text{where}
	\quad
	W_n(y)
	=
	e^{-\beta_n B_0(\sqrt{n}y)}.
\end{align*}

Then, for all $a>0$, we have the bound
\begin{align*}
	\she_n^{\textsc{st}}(t,0)
	\geq
	\int^{a}_{-a} W_n(y) \she_n(0,y;t,0)\, dy.
\end{align*}
We let $W_n = W_{n,a} = \int^{a}_{-a} W_n(y)\, dy$. Then,
\begin{align*}
	\esp\left[
		\she_n^{\textsc{st}}(t,0)^{-1}
	\right]
	&\leq
	\esp\left[
		\left(
			\int^a_{-a} W_n(y) \she_n(0,y;t,0)\, dy
		\right)^{-1}
	\right]
	\\
	&=
	\esp\left[ W_n^{-1}
		\left(
			W_n^{-1}
			\int^a_{-a} W_n(y) \she_n(0,y;t,0)\, dy
		\right)^{-1}
	\right]
	\\
	&\leq
	\esp\left[ W_n^{-2}
			\int^a_{-a} W_n(y) \she_n(0,y;t,0)^{-1}\, dy
	\right]
	\\
	&\leq
	\esp\left[ W_n^{-4}\right]^{1/2}
	\esp\left[
				\left(
					\int^a_{-a} W_n(y) \she_n(0,y;t,0)^{-1}\, dy
				\right)^2
	\right]^{1/2}
\end{align*}
where we used Jensen's inequality with respect to the measure with density $W_n^{-1}W_n(y)$ to go from the second to the third line. The first expected value in the last line is uniformly bounded. Next, using Cauchy-Schwarz inequality twice,
\begin{align*}
	&
	\esp\left[
				\left(
					\int^a_{-a} W_n(y) \she_n(0,y;t,0)^{-1}\, dy
				\right)^2
	\right]
	\\
	&\leq
	\esp\left[
			\int^a_{-a} W_n(y)^2 dy
			\int^a_{-a} \she_n(0,y;t,0)^{-2}\, dy
	\right]
	\\
	&\leq
	\esp\left[
			\left(
				\int^a_{-a} W_n(y)^2 dy
			\right)^2
	\right]^{1/2}
	\esp\left[
			\left(
				\int^a_{-a} \she_n(0,y;t,0)^{-2}\, dy
			\right)^2
	\right]^{1/2}.
\end{align*}
The first expected value above is uniformly bounded. Finally, using Jensen's inequality once again,
\begin{align*}
	\esp\left[
			\left(
				\int^a_{-a} \she_n(0,y;t,0)^{-2}\, dy
			\right)^2
	\right]
	& \leq
	a \,
	\esp\left[
				\int^a_{-a} \she_n(0,y;t,0)^{-4}\, dy
	\right]
	\\
	&
	= \,
	a
	\int^a_{-a}
	\esp\left[
		\she_n(0,y;t,0)^{-4}
	\right]
	\, dy,
\end{align*}
which is uniformly bounded in virtue of Theorem~\ref{thm:uniform-lower-tail-ptp}.
\end{proof}


\subsection{Gaussian deviation bounds}\label{sec:proof-polymers-deviation-bounds}


In this section, we show a Gaussian deviation bound that will be the key to the proof of Theorem~\ref{thm:uniform-lower-tail-ptp}. To this end, we will rely on Gaussian concentration estimates based on Malliavin calculus.
From now on, we specify our probability space. We let $\Omega$ be the space of continuous real valued functions defined on $\re_+ \times \ze_+$ with the cylindrical $\sigma$-algebra and we let $\p$ be the standard Wiener measure on $\Omega$. For each fixed $B\in \Omega$, we then define the environment $\{B_k:k\geq 1\}$ as
\begin{eqnarray*}
	B_k(t)= B(t,k).
\end{eqnarray*}
We also consider the space
\begin{eqnarray*}
	H_1
	=
	\left\{
		h:\re_+ \times \ze_+ \to \re:
		\,
		\| h \|_{H_1}^2
		:=
		\sum_k \int_{\re_+} |\dot{h}_k(s)|^2 ds		
		< \infty
	\right\}.
\end{eqnarray*}
The triple $(\Omega,H_1,\p)$ is known as the standard Wiener space.
For a measurable set $A\subset \Omega$ and $B\in \Omega$, we define
\begin{eqnarray*}
	q_A(B) = \inf \left\{ \| h\|_{H_1} :\, B+h \in A \right\}.
\end{eqnarray*}
The main estimate of this section is:
\begin{lemma}\label{thm:deviation-bound-distance}
	For each $p>0$, there exists a constant $c_p \in (0, \infty)$ such that
	\begin{eqnarray*}
		\p\left[
			q_A > c_p + u
		\right]
		\leq
		2 e^{-\frac{u^2}{2}},
	\end{eqnarray*}
	for all $u>0$ and all measurable set $A\subset \Omega$ such that $\p[A] \geq p$.
\end{lemma}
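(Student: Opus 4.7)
The plan is to identify the lemma as a direct consequence of Borell's Gaussian isoperimetric inequality on the abstract Wiener space $(\Omega,H_1,\p)$. Recall that Borell's inequality asserts that for any measurable $A\subset\Omega$ with $\p[A]>0$ and any $r\ge0$,
\[
\p\bigl[B\in\Omega:\, q_A(B)\le r\bigr]
\ \ge\
\Phi\bigl(\Phi^{-1}(\p[A])+r\bigr),
\]
where $\Phi$ denotes the standard Gaussian cumulative distribution function. The key observation identifying this bound with the object of interest is that $\{q_A\le r\}$ is precisely the Minkowski enlargement $A+rK$ of $A$ by the closed unit ball $K$ of the Cameron--Martin space $H_1$: by definition of $q_A$, $q_A(B)\le r$ means that some $h$ with $\|h\|_{H_1}\le r$ sends $B$ into $A$ under translation.

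Passing to complements and using the monotonicity $\Phi^{-1}(\p[A])\ge \Phi^{-1}(p)$ (which follows from $\p[A]\ge p$), we obtain
\[
\p[q_A>r]\ \le\ 1-\Phi\bigl(\Phi^{-1}(p)+r\bigr).
\]
Set $c_p:=1+\max\{0,-\Phi^{-1}(p)\}\in(0,\infty)$, so that $\Phi^{-1}(p)+c_p\ge 1>0$. Applying the previous display with $r=c_p+u$ for $u>0$, and invoking the elementary Gaussian tail bound $1-\Phi(x)\le \tfrac12 e^{-x^2/2}$ valid for $x\ge 0$, we arrive at
\[
\p[q_A>c_p+u]
\ \le\
1-\Phi\bigl(\Phi^{-1}(p)+c_p+u\bigr)
\ \le\
1-\Phi(u)
\ \le\
\tfrac12 e^{-u^2/2}
\ \le\
2\, e^{-u^2/2},
\]
as desired.

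Given the compact nature of this argument, the only conceptual step is correctly identifying the distance $q_A$ with the Cameron--Martin enlargement $A+rK$, so that Borell's inequality applies verbatim; no serious obstacle arises. The real work happens downstream in Theorem~\ref{thm:uniform-lower-tail-ptp}, where the abstract deviation bound is converted into a quantitative lower-tail estimate for $\she_n(0,y;t,x)$ by choosing $A$ to be a sub-level set $\{\log \she_n\le \log M\}$ and bounding $q_A$ from below via an explicit Cameron--Martin shift that increases $\log \she_n$ by a controlled amount (estimated through its Malliavin derivative).
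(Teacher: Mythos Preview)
Your proof is correct and takes a genuinely different route from the paper. The paper stays within the Malliavin framework: it observes that $q_A$ is $1$-Lipschitz under Cameron--Martin shifts (hence $\|Dq_A\|_{H_1}\le 1$), truncates to $q_A^M=f_M\circ q_A$ to guarantee membership in $\mathbb D^{1,p}$, applies the concentration inequality of Theorem~\ref{thm:gaussian-concentration} to $q_A^M$, bounds $\esp[q_A^M]\le c_p=(2\log(2/p))^{1/2}$ by a self-referential trick (on $A$ one has $q_A^M=0$, so $p\le\p[|q_A^M-\esp q_A^M|>u]$ for every $u<\esp q_A^M$), and finally lets $M\to\infty$ via Fatou. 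Your route through Borell's isoperimetric inequality is shorter and conceptually cleaner: it dispenses with the truncation, the derivative computation, and the separate expectation bound in a single stroke, at the price of invoking a deeper external theorem. The two are of course intimately related, since Lipschitz concentration on Wiener space is itself a consequence of Borell's inequality; the paper's argument has the modest advantage of being self-contained within the Malliavin toolbox already set up for the section, and yields an explicit $c_p$.

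One small inaccuracy in your closing paragraph (which concerns Theorem~\ref{thm:uniform-lower-tail-ptp}, not the lemma itself): the set $A$ used downstream is not a sub-level set of $\log\she_n$ but the event $A_n(t,x,K)=\{\she_n\ge\tfrac12\esp[\she_n],\ \langle L_{t,x,n}\rangle\le K\sqrt n\}$, on which $\she_n$ is \emph{bounded below} and the quenched overlap is controlled. The Cameron--Martin shift then \emph{decreases} $\log\she_n$ by at most $\sqrt K\,\|h\|_{H_1}$, which is how $q_{A_n}$ enters the lower bound for $\log\she_{n,B}$.
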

Before turning to the proof, we need to introduce some tools from Malliavin calculus. We say that a function $F:\Omega \to \re$ is cylindrical if there exists $n\geq 1$, $f\in C^1_0(\re^n;\re)$, $k_1,\cdots,k_n \geq 1$ and $t_1,\cdots,t_n \geq 0$ such that
\begin{eqnarray*}
	F(B) = f(B_{k_1}(t_1),\cdots, B_{k_n}(t_n)).
\end{eqnarray*}
For a cylindrical function $F$ and $h\in H_1$, we define
\begin{eqnarray*}
	D_h F(B) = \frac{d}{d\epsilon} F(B + \epsilon h) |_{\epsilon=0}.
\end{eqnarray*}
It easily follows that
\begin{eqnarray*}
	D_h F(B)
	=
	\sum^n_{j=1}
	\partial_j f(B_{k_1}(t_1),\cdots, B_{k_n}(t_n)) h(t_j,k_j).
\end{eqnarray*}
Hence, for each $B\in\Omega$ and each cylindrical function $F$, the mapping $h \mapsto D_h F(B)$ defines a continuous linear functional on $H_1$. As a consequence, for each $B\in\Omega$ and each cylindrical function $F$, there exists a unique $D F(B) \in H_1$ such that
\begin{eqnarray*}
	\langle D F(B),h \rangle_{H_1} = D_h F(B).
\end{eqnarray*}
From \cite[Proposition I.1]{Ust},
the operator $D$ can be extended to a continuous linear functional from $L^p(\Omega,\p;\re)$ to $L^p(\Omega,\p;H_1)$ for all $p>1$. We then define $\mathbb{D}_{p,1}$ as the space of functions $F\in L^p(\Omega,\p;\re)$ such that
\begin{eqnarray*}
	\| F \|_{p,1}
	:=
	\| F \|_{L^p(\Omega,\p;\re)} + \| DF \|_{L^p(\Omega,\p;H_1)}
	<
	\infty.
\end{eqnarray*}
We are now ready to state the key Gaussian concentration inequality which corresponds to \cite[Theorem 1, p.70]{Ust}.
\begin{theorem}\label{thm:gaussian-concentration}
	Let $F\in \mathbb{D}_{p,1}$ for some $p>1$ and suppose that $DF \in L^{\infty}(\Omega,\p;H_1)$. Let $m = \esp[F]$ and $\sigma^2 = \| DF\|^2_{L^{\infty}(\Omega,\p;H_1)}$. Then,
	\begin{eqnarray*}
		\p\left[
			|F-m| > u
		\right]
		\leq
		2 e^{-\frac{u^2}{2\sigma^2}},
	\end{eqnarray*}
	for all $u>0$.
\end{theorem}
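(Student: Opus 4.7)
The plan is to invoke Herbst's argument based on the Gaussian logarithmic Sobolev inequality on the Wiener space $(\Omega,H_1,\mathbb{P})$. Here the Malliavin derivative $DF$ taking values in the Cameron--Martin space $H_1$ plays the role of the gradient, and the hypothesis $\|DF\|_{H_1}\le \sigma$ almost surely translates into a sub-Gaussian control of the Laplace transform of $F-m$ with variance proxy $\sigma^2$. Once this moment generating function bound is in place, the tail estimate follows from Markov's inequality and a routine optimization in the Laplace parameter.

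Concretely, I would apply Gross's log-Sobolev inequality
\begin{equation*}
\mathbb{E}\bigl[G^2 \log G^2\bigr] - \mathbb{E}\bigl[G^2\bigr]\log \mathbb{E}\bigl[G^2\bigr]
\,\le\, 2\, \mathbb{E}\bigl[\|DG\|_{H_1}^2\bigr]
\end{equation*}
to $G = \exp(\lambda(F-m)/2)$ for $\lambda>0$. The chain rule gives $DG = \tfrac{\lambda}{2} G\,DF$, whence $2\|DG\|_{H_1}^2 \le \tfrac{\lambda^2 \sigma^2}{2}\,G^2$ pointwise. Setting $\phi(\lambda)=\mathbb{E}[e^{\lambda(F-m)}]$ and $\psi(\lambda) = \log\phi(\lambda)$, and noting that $\mathbb{E}[G^2\log G^2]=\lambda \phi'(\lambda)$, the inequality rearranges into the Herbst differential inequality
\begin{equation*}
\frac{d}{d\lambda}\left(\frac{\psi(\lambda)}{\lambda}\right) \,\le\, \frac{\sigma^2}{2},\qquad \lambda>0.
\end{equation*}
Integrating from $0^+$ and using $\psi(\lambda)/\lambda\to \mathbb{E}[F-m]=0$ yields $\phi(\lambda)\le e^{\lambda^2 \sigma^2/2}$ for every $\lambda>0$; the same argument applied to $-F$ handles $\lambda<0$. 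Markov's inequality and optimization at $\lambda=u/\sigma^2$ then give the one-sided bound $e^{-u^2/(2\sigma^2)}$ for each tail, summing to the claimed factor of $2$.

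The main technical obstacle is that $G=e^{\lambda(F-m)/2}$ does not automatically belong to $\mathbb{D}_{2,1}$ and the chain rule needs global justification. I would handle this by a truncation--density argument: work first with the bounded truncation $F_N = (-N)\vee F\wedge N$, approximate $F_N$ in $\mathbb{D}_{2,1}$ by smooth cylindrical functionals whose Malliavin derivatives still satisfy the $\sigma$ bound, apply the log-Sobolev--Herbst scheme to these approximations (where everything is manifestly legal), and pass to the limit $N\to\infty$ by monotone convergence of the Laplace transforms. An essentially equivalent route that avoids exponential integrability altogether is the Borell--Sudakov--Tsirelson path: the hypothesis on $DF$ implies, via the fundamental theorem of calculus along Cameron--Martin directions applied to an approximating sequence, that $F$ is $\sigma$-Lipschitz with respect to $\|\cdot\|_{H_1}$, and the classical Gaussian isoperimetric inequality for Cameron--Martin Lipschitz functions yields the same sub-Gaussian deviation bound directly.
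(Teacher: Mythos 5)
The paper does not actually prove this theorem: it is imported verbatim as \cite[Theorem 1, p.70]{Ust} and then used as a black box in the proof of Lemma~\ref{thm:deviation-bound-distance}, so there is no in-text argument to compare against. Your Herbst/log-Sobolev derivation is a correct, self-contained proof of the stated bound with the right constant: the Gaussian LSI on the abstract Wiener space $(\Omega,H_1,\mathbb{P})$ has constant $2$, the chain rule gives $\|D e^{\lambda(F-m)/2}\|_{H_1}^2\le\tfrac{\lambda^2\sigma^2}{4}e^{\lambda(F-m)}$, and the differential inequality $\tfrac{d}{d\lambda}(\psi(\lambda)/\lambda)\le\sigma^2/2$ with $\psi(\lambda)/\lambda\to 0$ integrates to $\mathbb{E}[e^{\lambda(F-m)}]\le e^{\lambda^2\sigma^2/2}$, whence Chernoff gives each one-sided tail $e^{-u^2/(2\sigma^2)}$. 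The technical point you flag is the genuine one: $e^{\lambda(F-m)/2}$ is not a priori in the domain of $D$, and the LSI cannot be applied to it directly. Your fix is the right one and is easy to make rigorous: with $F_N=(-N)\vee F\wedge N$ the chain rule for Lipschitz functions gives $DF_N=\mathbf{1}_{\{-N<F<N\}}\,DF$, so the bound $\|DF_N\|_{H_1}\le\sigma$ is preserved; the Herbst argument applied to the bounded $F_N$ yields the uniform-in-$N$ MGF bound, which in particular gives uniform integrability of $\{e^{\lambda(F_N-\mathbb{E}F_N)}\}_N$ and hence convergence of the MGFs as $N\to\infty$. The Borell--Sudakov--Tsirelson alternative you mention is also a legitimate route, with the usual caveat that Gaussian isoperimetry most directly controls deviations from a \emph{median}, so converting to the mean as in the theorem statement requires a short additional step. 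In short: where the paper cites, you prove, and the proof is sound.
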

We need one more ingredient:
\begin{lemma}\label{thm:finitness-q}
	If $\p[A]>0$, then $q_A$ is $\p$-almost surely finite.
\end{lemma}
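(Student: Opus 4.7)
The plan is to show that $N := \{B \in \Omega : q_A(B) = \infty\}$ has $\p$-measure zero.

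First I would verify the $H_1$-Lipschitz property of $q_A$: for any $B \in \Omega$ and $h \in H_1$,
\[
q_A(B+h) \leq q_A(B) + \|h\|_{H_1},
\]
which follows from the triangle inequality in $H_1$, since whenever $B + k \in A$ with $k \in H_1$, one has $(B+h) + (k-h) \in A$ and $\|k-h\|_{H_1} \leq \|k\|_{H_1} + \|h\|_{H_1}$; taking the infimum over $k$ and exchanging the roles of $B$ and $B+h$ yields $|q_A(B+h) - q_A(B)| \leq \|h\|_{H_1}$ whenever $q_A(B) < \infty$. In particular, $N$ is invariant under translations by the Cameron-Martin subspace: $N + h = N$ for every $h \in H_1$.

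Next, I would note that $A \subseteq \Omega \setminus N$, since $q_A(B) = 0$ for every $B \in A$; hence $\p[\Omega \setminus N] \geq \p[A] > 0$, so $\p[N] < 1$. The argument then concludes by invoking the zero-one law for Gaussian measures on the standard Wiener space: any measurable, $H_1$-translation invariant subset of $\Omega$ has $\p$-measure $0$ or $1$ (see, e.g., \cite{Ust}). Combined with the strict inequality $\p[N] < 1$, this forces $\p[N] = 0$, which is the claim.

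The only, rather mild, obstacle is justifying the zero-one law together with the measurability of $N$. Measurability of $q_A$ (and hence of $N$) follows from its definition by reducing the infimum to a countable dense family via the $H_1$-Lipschitz bound just established, while the zero-one law is a classical consequence of the Cameron-Martin quasi-invariance theorem. Neither step requires any input specific to the directed polymer setting.
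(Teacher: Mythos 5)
Your proof is correct and rests on the same essential observations as the paper's: the set $\{q_A < \infty\}$ is invariant under $H_1$-translations (by the Lipschitz property of $q_A$), it has positive measure since it contains $A$, and an $H_1$-invariance zero-one law then forces it to have full measure. The only difference is the formulation of the zero-one law you invoke: you cite the classical Cameron-Martin zero-one law for $H_1$-translation-invariant sets directly, while the paper reaches the same conclusion through Nualart's Malliavin-calculus criterion (Proposition 1.2.6 in \cite{Nu}: $\mathbf{1}_J \in \mathbb{D}^{1,1}$ iff $\p[J]\in\{0,1\}$), showing $D\mathbf{1}_J=0$ from the invariance $J=J+H_1$. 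The two are equivalent; your version is somewhat more elementary and self-contained, while the paper's fits naturally with the Malliavin machinery it has already set up in the same section, and sidesteps measurability concerns since membership in $\mathbb{D}^{1,1}$ presupposes it.
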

\begin{proof}
	Let $J=\{q_A<\infty\}$.
	We use the following elementary fact \cite[Proposition 1.2.6]{Nu}: ${\bf 1}_J \in \mathbb{D}^{1,1}$ if and only if $\p[J]=0$ or $1$.
	
	Observe that $J=J+H_1$. Hence, for any $h\in H_1$ and $B\in\Omega$, ${\bf 1}_{J}(B+h)={\bf 1}_J(B)$, so that $D{\bf 1}_J=0$.
	On the other hand, $\| {\bf 1}_J \|_{L^1(\Omega,\p;\re)}=\p[J]<\infty$. Hence, ${\bf 1}_J \in \mathbb{D}^{1,1}$ and $\p[J]=0$ or $1$. As $\p[J] \geq \p[A]>0$, we necessarily have that $\p[J]=1$.
\end{proof}
We can now complete the proof of Lemma~\ref{thm:deviation-bound-distance}.
\begin{proof}[Proof of Lemma~\ref{thm:deviation-bound-distance}]
	Let $h\in H_1$ and $\epsilon>0$. By the triangle inequality and the previous lemma, we have that $|q_A(B+\epsilon h)- q_A(B)| \leq \epsilon \| h \|_{H_1}$ for all $B\in\Omega$. Hence, $|D_h q_A(B)| \leq \| h \|_{H_1}$ for all $B\in\Omega$ and $h\in H_1$ so that $|\langle D q_A(B), h \rangle_{H_1}| \leq \| h \|_{H_1}$ for all $B \in \Omega$ and all $h\in H_1$. As a consequence,
	\begin{eqnarray*}
		 \| D q_A(B) \|_{H_1} \leq 1,
	\end{eqnarray*}
	for all $B\in\Omega$.
	
	For each $M\geq 1$, we introduce a cut-off function $f_M:\re_+ \to \re_+$ such that $f_M(x)=x$ for $x\in[0,M]$, $f_M(x)=0$ for $x\geq 2M+1$, $f_M(x) \leq x$ for all $x$, $\| f'_M \|_{\infty}\leq 1$ and such that $f_M \leq f_{M+1}$. We then define $q^M_A = f_M \circ q_A$. By the chain rule,
	\begin{eqnarray*}
		\| D q^M_A(B) \|_{H_1}
		=
		\| f'_M(q_A(B)) D q_A(B) \|_{H_1}
		\leq 1.
	\end{eqnarray*}
	As $q^M_A$ is bounded, we have $q^M_A \in \mathbb{D}^{1,p}$ for any $p>1$.
	By Theorem~\ref{thm:gaussian-concentration}, we then have
	\begin{eqnarray*}
		\p\left[
			|q^M_A-\esp[q^M_A] > u|
		\right]
		\leq
		2 e^{-\frac{u^2}{2}},
	\end{eqnarray*}
	for all $u>0$. Assume now that $\p[A]\geq p >0$. Then, for all $u<\esp[q^M_A]$,
	\begin{eqnarray*}
		p
		\leq
		\p[A]
		\leq
		\p\left[
			|q^M_A-\esp[q^M_A(\omega)]|>u
		\right]
		\leq
		2 e^{-\frac{u^2}{2}}.
	\end{eqnarray*}
	It follows that $\esp[q^M_A] \leq c_p:=(2 \log(2/p))^{1/2}$. Using Theorem~\ref{thm:gaussian-concentration} once again, we conclude that
	\begin{eqnarray*}
		\p\left[
			q^M_A > c_p + u
		\right]
		\leq2 e^{-\frac{u^2}{2}},
	\end{eqnarray*}
	for all $u>0$ and all $M\geq 1$. The result follows by Fatou's lemma.
\end{proof}


\subsection{Some preliminaries}\label{sec:proof-polymers-preliminaries}

Recall the definition of the point-to-point partition function \eqref{eq:pf-ptp}.
We shall use the notation $B(s,i):=B_{i}(s), i\ge 0$. Let $X_{\cdot}$ denote a rate one Poisson process with $X_0=1$, let $P$ denote its law and let $E$ be the expected value with respect to $P$. Then, the partition function can be written as
\begin{eqnarray*}
	Z^{\beta}(s,m;t,n) = e^{t-s} E[e^{\beta H_{s,t}(X)} {\bf 1}_{X_t=n}|X_s=m+1],
\end{eqnarray*}
where
\begin{eqnarray*}
	H_{s,t}(X) = \int^t_s dB_{X_u}(u) = \int^t_s dB(u,X_u).
\end{eqnarray*}
For readability in this section we shall use the mild abuse of notation
\begin{eqnarray*}
	Z^{\beta}(t,n) := Z^{\beta}(0,1;t,n) , \,\, H_t(X):= H_{0,t}(X),
\end{eqnarray*}
(we stress that this differs from the one used in Section~\ref{sec:proof-polymers-stationarity}).
For two paths $X$ and $\tilde{X}$, we define their overlap as
\begin{eqnarray*}
	L_t(X,\tilde{X}) = \int^t_0 {\bf 1}_{X_s = \tilde{X}_s}ds.
\end{eqnarray*}
We list some elementary identities:
\begin{lemma}\label{thm:simple-identities}
	For all $t>0$ and $n\geq 1$, we have
	\begin{align*}
		\esp[Z^{\beta}(t,n)]
		&=
		e^{t}e^{\frac{\beta^2}{2}t}P[X_t=n],
		\\
		\esp[Z^{\beta}(t,n)^2]
		&=
		e^{2t + \beta^2 t}
		E^{\otimes 2}[e^{\beta^2 L_t(X,\tilde{X})}{\bf 1}_{X_t = \tilde{X}_t=n}],
		\\
		\frac{Z^{\beta}(t,n)^2}{\esp[Z^{\beta}(t,n)]^2}
		&=
		E^{\otimes 2}[e^{\beta^2 L_t(X,\tilde{X})} | {X_t = \tilde{X}_t=n}].
	\end{align*}
\end{lemma}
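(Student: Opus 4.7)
The plan is to derive all three identities from the Feynman--Kac representation
\[
Z^{\beta}(t,n) = e^{t}\,E\bigl[e^{\beta H_t(X)}\mathbf{1}_{X_t=n}\bigm|X_0=1\bigr],
\qquad H_t(X)=\int_0^t dB(u,X_u),
\]
by exchanging the Brownian and Poisson expectations via Fubini and evaluating the resulting Gaussian integrals path by path. The central observation is that for any fixed càdlàg path $X$, the stochastic integral $H_t(X)$ is a sum of independent Brownian increments along the visited levels, so it is centered Gaussian with variance $\int_0^t du = t$, hence $\esp[e^{\beta H_t(X)}]=e^{\beta^2 t/2}$.

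For the first identity, I would pull $\esp$ inside $E$, apply the above formula, and pull the deterministic $e^{\beta^2 t/2}$ back out to obtain $e^t e^{\beta^2 t/2}P[X_t=n]$. For the second identity, I would introduce an independent copy $\tilde X$ and write
\[
\esp[Z^{\beta}(t,n)^2]
=e^{2t}E^{\otimes 2}\Bigl[\esp\bigl[e^{\beta(H_t(X)+H_t(\tilde X))}\bigr]\,\mathbf{1}_{X_t=\tilde X_t=n}\Bigr].
\]
For fixed $X,\tilde X$ the sum $H_t(X)+H_t(\tilde X)$ is centered Gaussian, and the key computation is the covariance: using that distinct $B_k$'s are independent and each $B_k$ has quadratic variation $du$,
\[
\esp[H_t(X)H_t(\tilde X)]
=\int_0^t \mathbf{1}_{X_u=\tilde X_u}\,du = L_t(X,\tilde X),
\]
so the total variance is $2t+2L_t(X,\tilde X)$, yielding the claimed formula after the Gaussian exponential factor is taken out of $E^{\otimes 2}$.

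The third identity is then purely algebraic: dividing the second by the square of the first and using that, under $P^{\otimes 2}$, $X$ and $\tilde X$ are independent so $P[X_t=n]^2 = P^{\otimes 2}[X_t=\tilde X_t=n]$, the prefactors $e^{2t+\beta^2 t}$ cancel and the indicator turns into conditioning on $\{X_t=\tilde X_t=n\}$. The only genuine piece of work is the covariance identification $\esp[H_t(X)H_t(\tilde X)]=L_t(X,\tilde X)$; once Fubini is justified (straightforward from $\esp[Z^\beta(t,n)^p]<\infty$ for $p=1,2$, which itself follows from the Gaussian computations), everything else is bookkeeping.
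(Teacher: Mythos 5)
Your proof is correct and fills in what the paper leaves unproved (the lemma is stated without a proof, preceded by the remark that these identities are ``elementary''). The key facts you isolate --- that, conditionally on a monotone path $X$, the stochastic integral $H_t(X)$ is centered Gaussian with variance $t$, and that $\esp[H_t(X)H_t(\tilde X)] = L_t(X,\tilde X)$ because the integrals only pick up a common $B_k$ on the overlap set --- are exactly what is needed, and the rest is Fubini and the Gaussian moment generating function, all justified along the lines you indicate.

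One remark on the statement itself rather than on your argument: as written in the paper the left-hand side of the third identity reads $Z^{\beta}(t,n)^2/\esp[Z^{\beta}(t,n)]^2$, which is random, whereas the right-hand side is deterministic; the numerator should be $\esp[Z^{\beta}(t,n)^2]$ (this is indeed what is used later, e.g.\ in the proof of Lemma~\ref{thm:uniform-lower-bound-probability}). Your derivation computes precisely this corrected version, and the final step $P[X_t=n]^2 = P^{\otimes 2}[X_t=\tilde X_t = n]$ turning the ratio into a conditional expectation is the right observation.
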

In the following, we write $Z^{\beta}_B(t,n)$ and $H^{B}_t(X)$ to stress the dependence on the environment $B=\{B(s,i),s\in\bR, i\ge 1\}$. We also abbreviate the polymer measure in the environment $B$ by $\langle \cdot \rangle_{t,n,\beta,B}$.
\begin{lemma}
	Assume $B = \bar{B} + h$ with $h\in H_1$. Then,
	\begin{eqnarray*}
		\log Z^{\beta}_B(t,n)
		\geq
		\log Z^{\beta}_{\bar{B}}(t,n)
		- \beta \sqrt{ \langle L_t (X,\tilde{X}) \rangle^{\otimes 2}_{t,n, \beta,\bar{B}}}\| h \|_{H_1}.
	\end{eqnarray*}
\end{lemma}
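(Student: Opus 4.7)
The plan is to reduce the inequality to a one-line Jensen plus Cauchy--Schwarz estimate by unpacking the dependence of $Z^\beta_B(t,n)$ on $B$. Writing $B = \bar B + h$ with $h \in H_1$, the stochastic integral defining the energy becomes
\begin{align*}
H^B_t(X) \;=\; \int_0^t dB(u,X_u) \;=\; H^{\bar B}_t(X) \;+\; \int_0^t \dot h_{X_u}(u)\, du,
\end{align*}
since $h$ is absolutely continuous in time with derivative in $L^2$. Therefore
\begin{align*}
\log Z^\beta_B(t,n) - \log Z^\beta_{\bar B}(t,n) \;=\; \log \Big\langle \exp\Bigl( \beta \int_0^t \dot h_{X_u}(u)\, du \Bigr) \Big\rangle_{t,n,\beta,\bar B}.
\end{align*}

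The first key step is Jensen's inequality applied to the polymer measure $\langle \cdot \rangle_{t,n,\beta,\bar B}$, which yields the lower bound
\begin{align*}
\log Z^\beta_B(t,n) - \log Z^\beta_{\bar B}(t,n) \;\geq\; \beta \int_0^t \big\langle \dot h_{X_u}(u) \big\rangle_{t,n,\beta,\bar B}\, du \;=\; \beta \sum_{k\geq 1} \int_0^t \mu_u(k)\, \dot h_k(u)\, du,
\end{align*}
where $\mu_u(k) := \langle \mathbf{1}_{X_u = k}\rangle_{t,n,\beta,\bar B}$ (an application of Fubini, harmless by finiteness of the relevant integrals).

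The second key step is Cauchy--Schwarz in the Hilbert space $H_1 \cong \ell^2(\mathbb{Z}_+; L^2([0,t]))$, applied to the pair $(\mu_\cdot(\cdot),\dot h_\cdot(\cdot))$. Taking absolute values, this gives
\begin{align*}
\Big| \sum_{k\geq 1} \int_0^t \mu_u(k)\, \dot h_k(u)\, du \Big| \;\leq\; \Big( \sum_{k\geq 1} \int_0^t \mu_u(k)^2\, du \Big)^{1/2} \|h\|_{H_1}.
\end{align*}

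The final step is to identify the $L^2$-norm of the polymer occupation profile as a two-replica overlap. Introducing an independent copy $\tilde X$ under the same polymer measure,
\begin{align*}
\mu_u(k)^2 \;=\; \big\langle \mathbf{1}_{X_u = k}\, \mathbf{1}_{\tilde X_u = k}\big\rangle_{t,n,\beta,\bar B}^{\otimes 2},
\end{align*}
so that summing over $k$ and integrating in $u$ produces exactly $\langle L_t(X,\tilde X)\rangle^{\otimes 2}_{t,n,\beta,\bar B}$. Combining the three displayed inequalities yields the claim. No step presents a genuine obstacle: the only point to be careful about is the interchange of integration, expectation and derivative in time, which is justified since $h$ is smooth enough that $\dot h_k \in L^2([0,t])$ and the polymer expectation is a finite probability measure on $X$; the inequality then follows for all $B \in \Omega$ by a standard approximation of $h$ by smooth elements of $H_1$ if needed.
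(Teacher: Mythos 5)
Your proposal is correct and follows essentially the same route as the paper: decompose $H^B_t = H^{\bar B}_t + H^h_t$, apply Jensen for the polymer Gibbs measure under $\bar B$ to drop to $\exp(\beta \langle H^h_t\rangle_{\bar B})$, then Cauchy--Schwarz in $H_1$ to bound $\langle H^h_t\rangle_{\bar B}$ by $\|h\|_{H_1}$ times the square root of the expected two-replica overlap $\langle L_t(X,\tilde X)\rangle^{\otimes 2}_{\bar B}$. (Note the paper's displayed proof contains a small typo -- the Gibbs brackets are written with subscript $B$ where they should read $\bar B$, as you and the lemma statement correctly have.)
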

\begin{proof}
	Using that $B = \bar{B} + h$
	\begin{align*}
		e^{-t}Z^{\beta}_B(t,n)
		&=
		E[e^{\beta H^{B}_t(X)}{\bf 1}_{X_t=n}]
		=
		E[e^{\beta H^{\bar{B}}_t(X)}e^{\beta H^h_t(X)}{\bf 1}_{X_t=n}]
		\\
		&=
		e^{-t} Z^{\beta}_{\bar{B}}(t,n)
		\left\langle
			e^{\beta H^h_t(X)}
		\right\rangle_{t,n,\beta,B}
		\\
		&\geq
		e^{-t} Z^{\beta}_{\bar{B}}(t,n)
		e^{
		\beta
		\left\langle
			H^h_t(X)
		\right\rangle_{t,n,\beta,B}
		}
	\end{align*}
	Now,
	\begin{align*}
		&
		\left|
			\left\langle
				H^h_t(X)
			\right\rangle_{t,n,\beta,B}
		\right|
		=
		\left|
			\left\langle
				\int^t_0 \dot{h}(s,X_s)ds
			\right\rangle_{t,n,\beta,B}
		\right|
		\\
		&=
		\left|
			\left\langle
				 \sum_k\int^t_0 {\bf 1}_{X_s=k}\dot{h}(s,k)ds
			\right\rangle_{t,n,\beta,B}
		\right|
		=
		\left|
			 \sum_k\int^t_0
			\left\langle
				 {\bf 1}_{X_s=k}
			\right\rangle_{t,n,\beta,B}
			\dot{h}(s,k)ds
		\right|
		\\
		&\leq
		\left|
			 \sum_k\int^t_0
			\left\langle
				 {\bf 1}_{X_s=k}
			\right\rangle_{t,n,\beta,B}^2
		\right|^{1/2}
		\times
		\| h \|_{H_1}
		=
		\sqrt{
		\left\langle	
			L_t(X,\tilde{X})
		\right\rangle_{t,n,\beta,B}^{\otimes 2}
		}
		\times
		\| h \|_{H_1}.
	\end{align*}
\end{proof}


\subsection{Proof of the lower tail bounds}\label{sec:proof-polymers-lower-tail-bounds}


In the following, we write $L_{t,x,n}(X,\tilde{X})=L_{tn-x\sqrt{n}}(X,\tilde{X})$ and
$\langle \cdot \rangle_{t,x,n,B}=\langle \cdot \rangle_{tn-x\sqrt{n},tn,\beta_n,B}$.
For $K>0$, we define the event
\begin{eqnarray*}
	A_{n}(t,x,K)
	=
	\left\{
		B: \, \she_{n,B}(t,x) \geq \mfrac12 \esp[\she_n(t,x)],\,
		\langle L_{t,x,n}(X,\tilde{X}) \rangle_{t,x,n,B} \leq K \sqrt{n}
	\right\},
\end{eqnarray*}
where we denoted $\she_n(t,x):=\she_n(0,0;t,x)$ the point-to-point partition function defined in \eqref{eq:rescaled-ptp}.
%
\begin{lemma}\label{thm:uniform-lower-bound-probability}
	For all $a>0$, $t>1$ and $K$ large enough, there exists $\delta = \delta(a,t,K)>0$ such that
	\begin{eqnarray*}
		\p[A_{n}(t,x,K)] \geq \delta,
	\end{eqnarray*}
	for all $n\geq 1$ and $|x|\leq a$.
\end{lemma}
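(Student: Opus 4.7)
Write $A_n(t,x,K)=A_1\cap A_2$ with
\[
A_1=\{\she_n(t,x)\ge \tfrac{1}{2}\esp[\she_n(t,x)]\},\qquad A_2=\{\langle L_{t,x,n}(X,\tilde X)\rangle_{t,x,n,B}\le K\sqrt{n}\}.
\]
My plan is to bound $\p[A_1]$ from below by a Paley--Zygmund argument and $\p[A_1\cap A_2^c]$ from above by a size-biased Markov argument, so that
\[
\p[A_n(t,x,K)]\ \ge\ \p[A_1]-\p[A_1\cap A_2^c]\ \ge\ \delta_1-\frac{C}{K},
\]
and then to conclude by taking $K$ large enough (depending on $t$ and $a$ only).

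For the Paley--Zygmund step I use
\[
\p[A_1]\ \ge\ \frac{1}{4}\,\frac{\esp[\she_n(t,x)]^2}{\esp[\she_n(t,x)^2]}.
\]
By Lemma~\ref{thm:simple-identities} applied to $Z^{\beta_n}$ and the rescaling \eqref{eq:rescaled-ptp}, the deterministic prefactors cancel and this ratio equals the reciprocal of
\[
E^{\otimes 2}\!\left[e^{\beta_n^2 L_{tn-x\sqrt{n}}(X,\tilde X)}\,\Big|\,X_0=\tilde X_0=1,\ X_{tn-x\sqrt{n}}=\tilde X_{tn-x\sqrt{n}}=tn\right],
\]
where $X,\tilde X$ are independent rate-one Poisson processes. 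Since $\beta_n^2=n^{-1/2}$ and, by a diffusive invariance principle for the endpoint-conditioned Poisson process, $L_{tn-x\sqrt{n}}(X,\tilde X)/\sqrt{n}$ converges in distribution to the local time at zero of the difference of two independent Brownian bridges, the displayed conditional expectation stays bounded uniformly in $n\ge 1$ and $|x|\le a$ (Brownian local time at $0$ has Gaussian tails). Therefore $\p[A_1]\ge \delta_1>0$ uniformly.

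For the overlap step, I exploit the pointwise bound $\mathbf{1}_{A_1}\le 4\,\she_n(t,x)^2/\esp[\she_n(t,x)]^2$, which gives
\[
\esp\!\left[\langle L_{t,x,n}\rangle_{t,x,n,B}\,\mathbf{1}_{A_1}\right]\ \le\ \frac{4\,\esp\!\left[\she_n(t,x)^2\,\langle L_{t,x,n}\rangle_{t,x,n,B}\right]}{\esp[\she_n(t,x)]^2}.
\]
The size-biased variant of Lemma~\ref{thm:simple-identities},
\[
\esp\!\left[Z^{\beta}(t,n)^2\,\langle L_t(X,\tilde X)\rangle_{t,n,\beta,B}\right]=e^{2t+\beta^2 t}\,E^{\otimes 2}\!\left[L_t(X,\tilde X)\,e^{\beta^2 L_t(X,\tilde X)}\mathbf{1}_{X_t=\tilde X_t=n}\right],
\]
together with the same invariance principle, shows that after the intermediate-disorder rescaling $\esp[\she_n(t,x)^2\langle L_{t,x,n}\rangle]/\sqrt{n}$ converges to a finite, $x$-continuous limit (the expectation of Brownian local time weighted by an exponential of local time), while $\esp[\she_n(t,x)]^2$ converges to $p(t,x)^2>0$. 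Hence $\esp[\langle L_{t,x,n}\rangle\,\mathbf{1}_{A_1}]\le C\sqrt{n}$ uniformly for $|x|\le a$, and Markov's inequality yields $\p[A_1\cap A_2^c]\le C/K$.

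The main obstacle is justifying both rescaled convergences with uniformity in $|x|\le a$: one needs a local central limit theorem for the endpoint-conditioned Poisson process at diffusive scale, so that the joint law of $(X_{s}/\sqrt{n}-s/\sqrt n)_{0\le s\le tn-x\sqrt{n}}$ under the bridge conditioning is close to that of a Brownian bridge, together with tightness of the rescaled overlap $L_{tn-x\sqrt{n}}/\sqrt{n}$ and of its size-biased cousin. Granted such a local limit theorem on the compact set $\{|x|\le a\}$, together with the standard fact that Brownian local time at zero has all exponential moments, the Paley--Zygmund and size-biased Markov bounds assemble to give the lemma.
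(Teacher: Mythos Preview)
Your overall strategy---Paley--Zygmund for $\p[A_1]$ and a size-biased Markov bound for $\p[A_1\cap A_2^c]$---is essentially the paper's approach, organized only slightly differently. The paper replaces $\she_n(t,x)^2$ by $\tfrac14\esp[\she_n(t,x)]^2$ on $A_1$ and then uses a union bound, while you replace $\mathbf{1}_{A_1}$ by $4\she_n(t,x)^2/\esp[\she_n(t,x)]^2$; both routes reduce to the same two required estimates, namely uniform (in $n\ge 1$, $|x|\le a$) control of
\[
E^{\otimes 2}_{t,x,n}\bigl[e^{\beta_n^2 L_{t,x,n}(X,\tilde X)}\bigr]
\quad\text{and}\quad
\frac{1}{\sqrt n}\,E^{\otimes 2}_{t,x,n}\bigl[L_{t,x,n}(X,\tilde X)\,e^{\beta_n^2 L_{t,x,n}(X,\tilde X)}\bigr].
\]

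The genuine gap is in how you justify these two bounds. You invoke a diffusive invariance principle under which $L_{t,x,n}/\sqrt n$ converges in law to a Brownian-bridge local time $\ell$, and then appeal to the fact that $\ell$ has Gaussian tails. But weak convergence $Y_n\Rightarrow Y$ together with $\esp[e^Y]<\infty$ does \emph{not} imply $\sup_n \esp[e^{Y_n}]<\infty$; a trivial counterexample is $Y_n=n$ with probability $1/n$ and $0$ otherwise. The same objection applies to your size-biased quantity: pointwise convergence of $n^{-1/2}E^{\otimes 2}_{t,x,n}[L\,e^{\beta_n^2 L}]$ to a finite, $x$-continuous limit does not give the uniform bound you need over all $n$ and $|x|\le a$. ``Tightness of the rescaled overlap'' is strictly weaker than the required uniform exponential integrability, so your final paragraph does not close the gap.

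The paper sidesteps this issue entirely: instead of passing to a Brownian limit, it proves the two displayed bounds directly via Lemma~\ref{thm:estimates-Poisson}. That lemma halves the time interval, de-conditions the endpoint (the local CLT is used only to control ratios of transition probabilities, not moments), and reduces matters to a discrete pinning estimate for a simple random walk, yielding $E^{\otimes 2}_{t,x,n}[e^{\beta_n^2 L_{t,x,n}}]\le C_1 e^{C_2\beta_n^4 m}$, which is bounded since $\beta_n^4 m = O(1)$. To complete your argument you would need either to invoke Lemma~\ref{thm:estimates-Poisson} at this point, or to supplement the invariance principle with a uniform exponential tail estimate for the prelimiting local times---which amounts to reproving that lemma.
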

\begin{proof}
	We denote $E_{t,x,n}[\cdot] = E[\cdot | X_{tn }=tn+x\sqrt{n}]$ and $H_{t,x,n}(X)=H_{tn-x\sqrt{n}}(X)$.
	Then,
	\begin{align*}
		\p[A_{n}(t,x,K)]
		&=
		\p\Big[
			\she_n(t,x) \geq \mfrac12 \esp[\she_n(t,x)],
			\\
			&
			\quad
			 E_{t,x,n}^{\otimes 2}\left[
				L_{t,x,n}(X,\tilde{X})e^{\beta_n({H}_{t,x,n}(X)+{H}_{t,x,n}(\tilde{X}))}
			\right]
			\leq K \sqrt{n} {\she}_n(t,x)^2
		\Big]
		\\
		&\geq
		\p\Big[
			\she_n(t,x) \geq \mfrac12 \esp[\she_n(t,x)],
			\\
			&
			\quad
			E_{t,x,n}^{\otimes 2}\left[
				L_{t,x,n}(X,\tilde{X})e^{\beta_n({H}_{t,x,n}(X)+{H}_{t,x,n}(\tilde{X}))}
			\right]
			\leq
			\frac{K \sqrt{n}}{4} \esp[{\she}_n(t,x)]^2
		\Big]
		\\
		&\geq
		\p\left[
			\she_n(t,x) \geq \mfrac12 \esp[\she_n(t,x)]
		\right]
		-1
		\\
		&
		+
		\p\left[
			E_{t,x,n}^{\otimes 2}\left[
				L_{t,x,n}(X,\tilde{X})e^{\beta_n({H}_{t,x,n}(X)+{H}_{t,x,n}(\tilde{X}))}
			\right]
			\leq
			\frac{K \sqrt{n}}{4} \esp[{\she}_n(t,x)]^2
		\right]
	\end{align*}
	Now, by Lemma~\ref{thm:simple-identities} and~\ref{thm:estimates-Poisson},
	\begin{eqnarray*}
		\frac{\esp[\she_n(t,x)^2]}{\esp[\she_n(t,x)]^2}
		=
		E^{\otimes 2}_{t,x,n}[e^{2\beta_n^2 L_{t,x,n}(X,\tilde{X})}]
		\leq C_1,
	\end{eqnarray*}
	for some finite $C_1=C_1(a)$ and for all $|x|\leq a$, $n\geq 1$.
	Hence, by Paley-Zygmund's inequality,
	\begin{eqnarray*}
		\p\left[
			\she_n(t,x) \geq \mfrac12 \esp[\she_n(t,x)]
		\right]
		\geq
		\frac{\esp[\she_n(t,x)]^2}{\esp[\she_n(t,x)^2]}
		\geq
		\frac{1}{4C_1},
	\end{eqnarray*}
	for all $|x|\leq a$ and $n\geq 1$.
	Now, by Chebyshev's inequality,
	\begin{align*}
		&
    \p\left[
			E_{t,x,n}^{\otimes 2}\left[
				L_{t,x,n}(X,\tilde{X}) e^{\beta_n({H}_{t,x,n}(X)+{H}_{t,x,n}(\tilde{X}))}
			\right]
			>
			\frac{K \sqrt{n}}{4} \esp[{\she}_n(t,x)]^2
		\right]
		\\
		& \leq
		\frac{4}{K\sqrt{n} \esp[{\she}_n(t,x)]^2}
		\esp\left[
			E_{t,x,n}^{\otimes 2}\left[
				L_{t,x,n}(X,\tilde{X})e^{\beta_n({H}_{tn}(X)+{H}_{tn}(\tilde{X}))}
			\right]
		\right]
		\\
		&=
		\frac{4}{K } \frac{1}{\sqrt{n}}
			E_{t,x,n}^{\otimes 2}\left[
				L_{t,x,n}(X,\tilde{X})e^{\beta_n^2 L_{t,x,n}(X,\tilde{X})}
			\right],
	\end{align*}
	which is finite by Lemma~\ref{thm:estimates-Poisson}, uniformly in $|x|\leq a$ and $n\geq 1$. The proof follows by taking $K$ large enough.
\end{proof}
We can now complete the proof:
\begin{proof}[Proof of Theorem~\ref{thm:uniform-lower-tail-ptp}]
	By translation invariance, it is enough to obtain uniform deviation bounds on $\she_n(t,x)$.
	Recall that, if $B=\bar{B}+h$ with $h\in H_1$, then
	\begin{eqnarray*}
		\log \she_{n,B}(t,x)
		\geq
		\log \she_{n,\bar{B}}(t,x)
		- \beta_n \sqrt{ \langle L_{t,x,n} (X,\tilde{X}) \rangle^{\otimes 2}_{t,x,n, \bar{B}}} \| h \|_{H_1}.
	\end{eqnarray*}
	If $\bar{B}\in A_n(t,x,K)$, this further yields
	\begin{eqnarray*}
		\log \she_{n,B}(t,x)
		\geq
		\log \esp[\she_n(t,x)]-\log 2
		- \sqrt{K} \| h \|_{H_1}.
	\end{eqnarray*}
	Hence,
	\begin{eqnarray*}
		\log \she_{n,B}(t,x)
		\geq
		\log \esp[\she_n(t,x)]-\log 2
		- \sqrt{K}q_{A_n(t,x,K)}(B).
	\end{eqnarray*}
	Then, for appropriate constants $c_1$ and $c_2$, we have
	\begin{eqnarray*}
		\p\Big[
			\log \she_n(t,x) < \log \esp[\she_n(t,x)]-c_1-c_2 u
		\Big]
		\leq
		\p\left[
			q_{A_n(t,x,K)} > c_p + u
		\right]
		\leq
		2 e^{-\frac{u^2}{2}},
	\end{eqnarray*}
	where the use of Proposition~\ref{thm:deviation-bound-distance} is justified by Lemma~\ref{thm:uniform-lower-bound-probability}.
\end{proof}



\section*{Acknowledgments}
The authors owe their gratitude to Ivan Corwin for providing us with very
valuable suggestions and sharing us with his recent progress on this
topic and to Kostya Khanin for a crucial observation on stationary KPZ
increments. They are thankful to Nicolas Perkowski for supplying to the latter a rigourous proof and for numerous useful comments. TO is grateful to Tommaso Cornelis Rosati for valuable discussions. JDD is thankful to Tadahisa Funaki for valuable remarks.
The work of GMF was partially supported by Fondecyt grant 1171257, N\'ucleo Milenio `Modelos Estoc\'asticos de Sistemas Complejos y Desordenados' and MATH Amsud `Random Structures and Processes in Statistical Mechanics'.
The work of JDD and TO was supported by the German Research Foundation via DFG Research Unit FOR2402.


\appendix

\section{Estimates on the Poisson process}

Recall that we denote $E_{t,x,n}[\cdot] = E[\cdot | X_{tn-x\sqrt{n}}=tn]$ and $L_{t,x,n}(X,\tilde{X})=L_{tn-x\sqrt{n}}(X,\tilde{X})$.
\begin{lemma}\label{thm:estimates-Poisson}
	For all $a>0$ and $t>0$, we have
	\begin{align*}
		\sup_{|x|\leq a}
		E^{\otimes 2}_{t,x,n}\left[
			e^{\beta_n^2 L_{t,x,n}(X,\tilde{X})}
		\right]
		&< \infty,
		\\
		\sup_{|x|\leq a} \beta_n^2
		E^{\otimes 2}_{t,x,n}\left[ L_{t,x,n}(X,\tilde{X})
			e^{\beta_n^2 L_{t,x,n}(X,\tilde{X})}
		\right]
		&< \infty.
	\end{align*}
\end{lemma}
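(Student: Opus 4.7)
My plan is to expand the exponentials in Taylor series and estimate the moments $E^{\otimes 2}_{t,x,n}[L_{t,x,n}^k]$ uniformly in $n\geq 1$ and $|x|\leq a$; the exponential bounds then follow by summing. Set $T=tn-x\sqrt n$, $N=tn-1$ and $Y=X-1$, $\tilde Y=\tilde X-1$, so that under $E^{\otimes 2}_{t,x,n}$ the processes $Y$ and $\tilde Y$ become two independent Poisson bridges from $0$ to $N$ over $[0,T]$. By Fubini,
\begin{align*}
E^{\otimes 2}_{t,x,n}[L_{t,x,n}^k]=k!\int_{0<s_1<\cdots<s_k<T}P^B(Y_{s_i}=\tilde Y_{s_i},\,i=1,\ldots,k)\,ds_1\cdots ds_k.
\end{align*}

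The key step is a uniform local CLT bound on the $k$-point function. The bridge density has the explicit product form
\begin{align*}
P^B(Y_{s_i}=y_i,\,\forall i)=\frac{1}{P(Y_T=N)}\prod_{i=0}^{k}\frac{(s_{i+1}-s_i)^{y_{i+1}-y_i}e^{-(s_{i+1}-s_i)}}{(y_{i+1}-y_i)!},
\end{align*}
with $s_0=y_0=0$, $s_{k+1}=T$, $y_{k+1}=N$. Stirling applied to each factorial, together with the one-point estimate $P(Y_T=N)\geq cT^{-1/2}$ (which holds uniformly in $|x|\leq a$ since $|N-T|=O(\sqrt n)$), gives a Gaussian upper bound $P^B(Y_{s_i}=y_i,\forall i)\leq C^{k+1}\phi_\Sigma(y-\mu)$, where $\mu_i=Ns_i/T$ and $\Sigma_{ij}=s_{i\wedge j}(T-s_{i\vee j})/T$ is the Brownian-bridge covariance matrix on $[0,T]$, for which the well-known identity $\det\Sigma=T^{-1}\prod_{i=0}^{k}(s_{i+1}-s_i)$ holds. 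Summing the squares over $y\in\bZ^k$ by comparison with the Gaussian integral $\int\phi_\Sigma^2\,dy=(4\pi)^{-k/2}(\det\Sigma)^{-1/2}$ yields
\begin{align*}
P^B(Y_{s_i}=\tilde Y_{s_i},\,\forall i)\leq\frac{C^k\sqrt T}{\sqrt{\prod_{i=0}^{k}(s_{i+1}-s_i)}}.
\end{align*}

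Inserting this estimate into the moment integral and rescaling $s_i=Tu_i$ reduces the bound to the Dirichlet integral $\int_{0<u_1<\cdots<u_k<1}\prod_{i=0}^{k}(u_{i+1}-u_i)^{-1/2}du=\pi^{(k+1)/2}/\Gamma((k+1)/2)$, so that $E^{\otimes 2}_{t,x,n}[L_{t,x,n}^k]\leq k!\,C^kT^{k/2}\pi^{(k+1)/2}/\Gamma((k+1)/2)$. Since $\beta_n^2\sqrt T=n^{-1/2}\sqrt{tn-x\sqrt n}$ is bounded uniformly in $n\geq 1$ and $|x|\leq a$, the Taylor series
\begin{align*}
E^{\otimes 2}_{t,x,n}[e^{\beta_n^2L_{t,x,n}}]\leq\sqrt\pi\sum_{k\geq 0}\frac{(C\sqrt\pi\,\beta_n^2\sqrt T)^k}{\Gamma((k+1)/2)}
\end{align*}
is finite, uniformly in $n$ and $|x|\leq a$, because $\sum_k z^k/\Gamma((k+1)/2)$ is an entire function of $z$. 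The second inequality follows identically after shifting the summation index and absorbing one extra factor $\beta_n^2\sqrt T=O(1)$. The main obstacle is securing the Gaussian upper bound on the $k$-point function with constants uniform in the configuration of times $(s_i)$, particularly when some spacings $s_{i+1}-s_i$ are small: this should be handled by splitting into small-spacing and large-spacing regions, using a crude probabilistic bound on the former (combined with the integrability of the Dirichlet weight $\prod(s_{i+1}-s_i)^{-1/2}$) and the Stirling-Gaussian estimate on the latter.
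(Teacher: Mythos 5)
Your approach is genuinely different from the paper's. The paper first halves the time window by time reversal and Cauchy--Schwarz, then drops the bridge conditioning via a one-point local CLT, and finally reduces the overlap $L$ to the local time at $0$ of the \emph{difference} skeleton walk $S_i = (X-\tilde X)_{\sigma_i}$ by integrating out the i.i.d.\ exponential holding times; the resulting quantity $E[(1-2\beta_n^2)^{-L_{2m}(S)}]$ is controlled by an off-the-shelf discrete pinning estimate from \cite{DGLT}. Your route instead Taylor-expands the exponential, computes each moment $E^{\otimes 2}_{t,x,n}[L^k]$ via the $k$-point collision probability of Poisson bridges, bounds that by the Gaussian $k$-point density of the Brownian bridge (whose determinant identity $\det\Sigma = T^{-1}\prod(s_{i+1}-s_i)$ you correctly use), and sums the resulting Dirichlet integral. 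The scaling check $\beta_n^2\sqrt{T}=O(1)$ and the convergence of $\sum_k z^k/\Gamma((k+1)/2)$ are both correct, so your plan closes if the $k$-point bound holds with a constant growing at most like $C^k$. What your route buys is self-containment and a uniform moment bound of independent interest; what the paper's route buys is the avoidance of any multi-point LCLT, since the reduction to a discrete-time local time converts the whole estimate into a one-dimensional pinning problem with a citable constant.

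The one place your argument is not yet a proof is exactly the point you flag: you need $P^B(Y_{s_i}=y_i,\ \forall i) \le C^{k+1}\phi_\Sigma(y-\mu)$ with $C$ independent of $k$ and of the spacings $\Delta_i = s_{i+1}-s_i$, including arbitrarily small ones, and then you need $\sum_{y\in\bZ^k}\phi_\Sigma(y-\mu)^2 \le C^k (\det\Sigma)^{-1/2}$ (the replacement of the lattice sum by the Gaussian integral is not free when some eigenvalues of $\Sigma$ are $O(1)$). The per-factor estimate $p(\Delta,\delta) = \Delta^\delta e^{-\Delta}/\delta! \le C\,\Delta^{-1/2}e^{-(\delta-\Delta)^2/(2\Delta)}$ does hold uniformly in $\Delta>0$ and $\delta\ge 0$ (Stirling for $\delta\ge 1$, and direct inspection for $\delta=0$ where the right-hand side overestimates), so the product form of the Poisson bridge density does give $C^{k+1}$; dividing by $P(Y_T=N)\ge cT^{-1/2}$ (near-diagonal LCLT, valid since $|N-T|=O(\sqrt T)$ for $|x|\le a$) then produces the bridge Gaussian. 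But you still must justify the passage from $\sum_{y\in\bZ^k}$ to $\int_{\bR^k}$; a clean fix is to work directly with the collision probability, summing over the increments $\delta_i$ one at a time and using the one-dimensional, unconditional bound $\sum_{j\in\bZ}p(\Delta,j)^2 = e^{-2\Delta}I_0(2\Delta) \le C\,\Delta^{-1/2}$ valid for all $\Delta>0$, being careful to preserve the $\Delta_k^{-1/2}$ (rather than $\Delta_k^{-1}$) rate on the final bridge segment by splitting at a mid-point rather than peeling from one end. As written, the proposal is a correct strategy with the key uniform estimate stated but not proved; fleshing out that estimate would make it a complete alternative to the paper's proof.
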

\begin{proof}
    First, it is enough to prove the first statement for all fixed $t>0$ and $a>0$, as then the second statement follows by Cauchy-Schwartz inequality.
	Write $s=tn-x\sqrt{n}$ and $m=tn$ for simplicity and assume without loss of generality that $m$ is an integer.
	We shall reduce the problem to estimate the overlap over half of the trajectories.
	Let
	\begin{align*}
		L'_{s/2}(X,\tilde{X})=\int^{s}_{s/2}{\bf 1}_{X_r = \tilde{X}_r}dr,
	\end{align*}
	and note that under $E_{t,x,n}$, $L_{s/2}(X,\tilde{X})$ and $L'_{s/2}(X,\tilde{X})$ have the same law, by considering the processes backwards in time and recalling the jump times have Lebesgue measure zero. Cauchy-Schwartz inequality then implies
	\begin{align*}
		E^{\otimes 2}_{t,x,n}\left[
			e^{\beta_n^2 L_{t,x,n}(X,\tilde{X})}
		\right]
		&=
		E^{\otimes 2}_{t,x,n}\left[
			e^{\beta_n^2( L_{s/2}(X,\tilde{X})+L'_{s/2}(X,\tilde{X}))}
		\right]
		\leq
		E^{\otimes 2}_{t,x,n}\left[
			e^{2\beta_n^2 L_{s/2}(X,\tilde{X})}
		\right].
	\end{align*}
	Next, we will de-condition the trajectories: let $p(t;k,l)=P[X_t=l| X_0=k]$, then
	\begin{align*}
		&
		E^{\otimes 2}_{t,x,n}\left[
			e^{2\beta_n^2 L_{s/2}(X,\tilde{X})}
		\right]
		=
		\frac{
		\sum^{m}_{k,	\tilde{k}=1}
		E^{\otimes 2}\left[
			e^{2\beta_n^2 L_{s/2}(X,\tilde{X})}
			{\bf 1}_{X_{s/2}=k}
			{\bf 1}_{\tilde{X}_{s/2}=\tilde{k}}
			{\bf 1}_{X_s = \tilde{X}_s=m}
		\right]
		}
		{
		p(s;0,m)^2
		}
		\\
		&
		=
		\frac{
		\sum^{m}_{k,	\tilde{k}=1}
		E^{\otimes 2}\left[
			e^{2\beta_n^2 L_{s/2}(X,\tilde{X})}
			{\bf 1}_{X_{s/2}=k}
			{\bf 1}_{\tilde{X}_{s/2}=\tilde{k}}
		\right]
		p(s/2;k,m)p(s/2;\tilde{k},m)
		}
		{
		p(s;0,m)^2
		}
		\\
		&
		\leq
		\max_{k=1,\dots,m}\frac{p(s/2;k,m)^2}{p(s;0,m)^2}
		E^{\otimes 2}\left[
			e^{2\beta_n^2 L_{s/2}(X,\tilde{X})}
			{\bf 1}_{X_{s/2},\tilde{X}_{s/2} \leq m}
	\right]
    \leq C(a) E^{\otimes 2}\left[
			e^{2\beta_n^2 L_{s/2}(X,\tilde{X})}
			{\bf 1}_{X_{s/2},\tilde{X}_{s/2} \leq m}
			\right],
	\end{align*}
	where the maximum before the last inequality is bounded by $C(a)$ uniformly in $n\geq 1$ and $|x|\leq a\sqrt{n}$ by the local central limit theorem.
	
	Now, we let $W=X-\tilde{X}$ and we observe that $W$ is a symmetric continuous-time random walk with jump rate $2$. The overlap of $X$ and $\tilde{X}$ then corresponds to the local time of $W$ at $0$. Let us denote the jump times of $W$ by $(\sigma_i)_{i\leq 1}$ and define $T_1=\sigma_1$ and $T_i = \sigma_i - \sigma_{i-1}$ for $i\geq 2$. This way, $(T_i)_{i\geq 1}$ are i.i.d exponential random variables with parameter $2$. Let also $S_i = W_{\sigma_i}$ and observe that $S$ is a simple symmetric discrete-time random walk which is independent of $(T_i)_{i\geq 1}$. If we let $N_t = \max\{i:\, \sigma_i \le t\}$, then
	\begin{eqnarray*}
		L_{s/2}(X,\tilde{X}) = \sum^{N_{s/2}}_{i=1} T_i{\bf 1}_{S_i=0}.
	\end{eqnarray*}
	As on the event $\{X_{s/2},\tilde{X}_{s/2} \leq m\}$ it holds that $N_{s/2} \leq 2m$, it holds that
	\begin{eqnarray*}
		L_{s/2}(X,\tilde{X}) \leq \sum^{2m}_{i=1} T_i{\bf 1}_{S_i=0}.
	\end{eqnarray*}
	and, in particular,
	\begin{eqnarray*}
		E^{\otimes 2}\left[
			e^{2\beta_n^2 L_{s/2}(X,\tilde{X})}
			{\bf 1}_{X_{s/2},\tilde{X}_{s/2} \leq m}
		\right]
		\leq
		E^{\otimes 2}
		\left[
			e^{2 \beta_n^2 \sum^{2m}_{i=1} T_i{\bf 1}_{S_i=0}}
		\right].
	\end{eqnarray*}
	Let $\mathcal{S}=\sigma(S_i:\, i\geq 1)$. Then, using the explicit distribution of the $T_i$'s together with independence,
	\begin{align*}
		E^{\otimes 2}
		\left[
			e^{2 \beta_n^2 \sum^{2m}_{i=1} T_i{\bf 1}_{S_i=0}}
		\right]
		&=
		E^{\otimes 2}
		\left[
		E^{\otimes 2}
		\left[
			e^{2 \beta_n^2 \sum^{2m}_{i=1} T_i{\bf 1}_{S_i=0}}
		\Big{|}
		\mathcal{S}
		\right]
		\right]
		\\
		&=
		E^{\otimes 2}
		\left[
		\prod^{2m}_{i=1}
		E^{\otimes 2}
		\left[
			e^{2 \beta_n^2 T_i{\bf 1}_{S_i=0}}
		\Big{|}
		\mathcal{S}
		\right]
		\right]
		\\
		&=
		E^{\otimes 2}
		\left[
		\prod^{2m}_{i=1}
			\left( 1 - 2 \beta_n^2 {\bf 1}_{S_i=0} \right)^{-1}
		\right]
		\\
		&=
		E^{\otimes 2}
		\left[
			\left( 1 - 2 \beta_n^2 \right)^{-L_{2m}(S)}
		\right]
	\end{align*}
	for $n$ large enough, where $L_{2m}(S)=\sum^{2m}_{i=1}{\bf 1}_{S_i=0}$.
	Summarizing all of the above discussion and using the standard estimate $1-\epsilon\le\epsilon ^\ell$ for all $\ell>0$ and $\epsilon\in\mathbb{R}$, we have 	
\begin{align*}
		E^{\otimes 2}_{t,x,n}\left[
			e^{\beta_n^2 L_{t,x,n}(X,\tilde{X})}
		\right]
		\leq
		C(a)
		E^{\otimes 2}
		\left[
			e^{2\beta_n^2 L_{2m}(S)}
		\right].
	\end{align*}
	The problem is then reduced to a standard pinning estimate. From \cite[Proof of Lemma 4.1]{DGLT}, we can then find two finite constants $C_1=C_1(a,t)$ and $C_2=C_2(a,t)$ such that
	\begin{eqnarray*}
		E^{\otimes 2}_{t,x,n}\left[
			e^{\beta_n^2 L_{t,x,n}(X,\tilde{X})}
		\right]
		\leq
		C_1 e^{C_2 \beta_n^4 m}.
	\end{eqnarray*}
	This proves the first statement.
\end{proof}


\section{A glimpse at the intermediate disorder regime}\label{app:idr}

To provide some heuristics on the intermediate disorder regime aimed at the reader who is not familiar with the topic, we include below two comments.

The first one consists in exhibiting a discrete SPDE for the discrete model. Recall that $Z^{\beta,\theta}(t,n)$ denotes the partition function of the stationary directed polymer model introduced in Section \ref{sec:results-semi-discrete} for $n\geq 1$ and define $Z^{\beta,\theta}(t,0)=e^{\beta B_0(t)+\theta t}$. In this regime, the parameters of the model satisfy the relation $\theta=1 + \tfrac{\beta^2}{2}$.
Let $z(t,j)=e^{-\theta t}Z^{\beta,\theta}(t,j)$. Then, a simple application of It\^o's formula shows that
\begin{eqnarray*}
	dz(t,j)
	=
	\Big(
		z(t,j-1) - z(t,j)
	\Big)\, dt
	+
	\beta z(t,j) \, dB_j(t),
	\quad j\geq 1.
\end{eqnarray*}
This can be seen as a discrete version of the SHE. Indeed, even though the discrete gradient may look odd at first, we recall that the intermediate disorder regime involves the skew scaling $(t,x)\mapsto (nt-x\sqrt{n},nt)$ which mixes the time and space coordinates and leads to a Laplacian in the limit. Recall also that $\beta=\beta_n=n^{-1/4}$ which corresponds to the square root of the space scale and yields the white noise in the SHE as the scaling of the family of Brownian motions.

The second comment appeals to the interpretation of the models as random measures on paths. If the random potential $\wh$ was smooth, the Feynman-Kac formula would allow us to express the solution of the stationary SHE as
\begin{eqnarray*}
	\she^{\text{ST}}(t,x)
	=
	E_{t,x}\left[
		e^{\cB(W_t)}e^{\int^t_0 \wh(t-s,W_{t-s})}ds
	\right],
\end{eqnarray*}
where, under $E_{t,x}$, $W$ is a Brownian motion with $W_0=x$ and we recall that $\cB$ is a two-sided Brownian motion which is independent of $\wh$. The above relation can be formalized as presented, for instance, in \cite{Q-review}. We may think of $W_{t-\cdot}$ as the trajectory of a polymer  based at the space-time point $(t,x)$ and going backwards until it reaches the line $\{(0,x):\, x\in\re\}$ where it collects the boundary condition $e^{\cB}$.

Now, except for a harmless additive constant, the discrete model can be written as
\begin{eqnarray*}
	z(t,n)
	=
	E_{t,n}\left[
		e^{\beta B_0(\sigma_0)}
		e^{\beta H_t(X)}
	\right],
\end{eqnarray*}
where $B_0$ is a two-sided Brownian motion and, under $E_{t,n}$, $X$ is an integer valued totally asymmetric continuous time random walk which jumps from level $j+1$ to level $j$ at rate $1$ until it reaches level $0$ at time $\sigma_0$ and initial position $X_0=n$. The energy of a path $X$ is given by
\begin{eqnarray*}
	H_t(X) = \sum^n_{j=1} B_j(t-\sigma_{j-1},t-\sigma_{j}),
\end{eqnarray*}
where $\sigma_j$ denotes the hitting time of level $j$ for $X$ with the convention $\sigma_{n}=0$, and $\{B_j,\,j\ge 1\}$ are independent two-sided Brownian motions which are independent of $B_0$.  Once again, we can see $X$ as a polymer based at the space-time point $(t,n)$, jumping down until it reaches the line $\{(s,0):\, s\in\re\}$ where it collects the boundary condition $e^{\beta B_0}$.

Now, we may think of the skew scaling as mapping the semi-discrete space-time point $(s,j)$ to the continuum space-time point $(\tfrac{s}{n},\tfrac{j-s}{\sqrt{n}})$. This way, the point $(tn-x\sqrt{n},tn)$ is mapped to $(t-\tfrac{x}{\sqrt{n}},x)$ which becomes $(t,x)$ in the limit. In particular, in the limit, the line $\{(s,0):\, s\in\re\}$ that carries the boundary condition $e^{\beta B_0}$ becomes the line $\{(0,x):\, x\in\re\}$ on which the initial condition for the SHE $e^{\cB}$ is placed.
On the other hand, the walk $X$ becomes a Brownian motion in such a diffusive scaling.

\end{document}